\theoremstyle{plain}
\newtheorem{thm}{Theorem}[section]
\newtheorem{prop}[thm]{Proposition}
\newtheorem{lemma}[thm]{Lemma}
\theoremstyle{definition}
\newtheorem{remark}[thm]{Remark}
\theoremstyle{plain}
\newtheorem{mthm}{Theorem}
\Crefname{prop}{Proposition}{Propositions}
\Crefname{mthm}{Theorem}{Theorems}
\numberwithin{equation}{section} %Equation numbering
\newcommand{\M}{{\mathcal{M}}}
\newcommand{\p}{{\partial}}
\renewcommand{\d}{\mathsf{d}}
\newcommand{\R}{{\mathbb{R}}}
\newcommand{\Q}{{\mathbb{Q}}}
\newcommand{\g}{{\mathsf{g}}}
\renewcommand{\L}{{\mathcal{L}}}
\renewcommand{\M}{{\mathbb{M}}}
\renewcommand{\k}{\mathsf{k}}
\newcommand{\I}{\mathbb{I}}
\newcommand{\tl}{\tilde}
\newcommand{\D}{\Delta}
\newcommand{\ph}{\phantom{=}}
\newcommand{\nn}{\nonumber}
\newcommand{\al}{\alpha}
\newcommand{\be}{\beta}
\newcommand{\ka}{\kappa}
\newcommand{\la}{\lambda}
\newcommand{\K}{\mathsf{K}}
\newcommand{\XN}{X_N}
\newcommand{\E}{{\mathbb{E}}}
\newcommand{\Cs}{\mathsf{C}}
\newcommand{\Fr}{{F}}
\newcommand{\Ec}{\mathcal{E}}
\renewcommand{\P}{\mathcal{P}}
\newcommand{\bmu}{\bar\mu}
\newcommand{\ga}{\gamma}
\renewcommand{\sf}{\mathsf{s}}
\newcommand{\bFr}{\bar{F}_N}
\newcommand{\bEc}{\bar{\Ec}_N}
\newcommand{\ws}{\mathsf{w}}
\newcommand{\os}{\mathsf{o}}
\let\div\relax
\DeclareMathOperator{\div}{div}
\let\PV\relax
\DeclareMathOperator{\PV}{PV}
\def\XXint#1#2#3{{\setbox0=\hbox{$#1{#2#3}{\int}$ }
\vcenter{\hbox{$#2#3$ }}\kern-.6\wd0}}
\def \hal{\frac{1}{2}}
\def\({\left(}
\def\){\right)}
\def\nab{\nabla}
\def\indic{\mathbf{1}}
\title[Relative entropy and modulated free energy without confinement]{Relative entropy and modulated free energy without confinement via self-similar transformation}
\author[M. Rosenzweig]{Matthew Rosenzweig}
\address{Matthew Rosenzweig, Carnegie Mellon University, Department of Mathematical Sciences, Pittsburgh, PA} 
\email{mrosenz2@andrew.cmu.edu}
\thanks{~M.R. is supported by NSF grant DMS-2206085.}
\author[S. Serfaty]{Sylvia Serfaty}
\address{Sylvia Serfaty, Courant Institute of Mathematical Sciences, New York University, New York City, NY}
\thanks{~S.S. is supported by NSF grant DMS-2247846 and a Simons Investigator award.}
\email{serfaty@cims.nyu.edu}
\begin{document}
\begin{abstract}
This note extends the modulated entropy and free energy methods for proving mean-field limits/propagation of chaos to the whole space without any confining potential, in contrast to previous work limited to the torus or requiring confinement in the whole space, for all log/Riesz flows.  Our novel idea is a scale transformation, sometimes called self-similar coordinates in the PDE literature, which converts the problem to one with a quadratic confining potential, up to a time-dependent renormalization of the interaction potential. In these self-similar coordinates, one can then establish a Gr\"onwall relation for the relative entropy or modulated free energy, conditional on bounds for the Hessian of the mean-field log density. This generalizes recent work of Feng-Wang \cite{FW2023}, which extended the relative entropy method of \cite{JW2018} to the whole space for the viscous vortex model. Moreover, in contrast to previous work, our approach allows to obtain uniform-in-time propagation of chaos and even polynomial-in-time generation of chaos in the whole space without confinement, provided one has suitable decay estimates for the mean-field log density. The desired regularity bounds and decay estimates are the subject of the {companion paper} \cite{HRS2024confII}. % In doing so, we even obtain , extending results of \cite{GlBM2021, CdCRS2023} from the torus to the whole space. 
\end{abstract}

\maketitle

\section{Introduction}
First-order mean-field systems have attracted enormous interest in recent years. At the microscopic level, the dynamics are described by the system of $N$ SDEs
\begin{align}\label{eq:SDE}
\begin{cases}
\displaystyle dx_i = \frac1N\sum_{1\leq j\leq N: j\neq i} \M\nabla\g(x_i-x_j)dt + \sqrt{\frac2\be}dW_i, \\
\displaystyle x_i|_{t=0} = x_i^\circ.
\end{cases}
\end{align}
Here, $\XN^\circ = (x_1^\circ,\ldots,x_N^\circ) \in (\R^\d)^N$ are the initial positions of the particles. The function $\g: \R^\d \rightarrow \R\cup \{\infty\}$ is the interaction potential, assumed to belong to the \emph{log/Riesz} class of singular, integrable potentials
\begin{align}
\g(x) \coloneqq \begin{cases} -\log|x|, & {\sf=0} \\ \frac1\sf|x|^{-\sf}, & {0<\sf<\d}. \end{cases}
\end{align}
The $\d\times\d$ matrix $\M$ has constant real entries. The two cases we will consider are $\M$ is antisymmetric (Hamiltonian/conservative) or $\M=-\I$ (gradient/dissipative), {though linear combinations of the two are permitted}. For convenience, we introduce the vector field notation $\k\coloneqq \M\nabla\g$. The parameter $\be\in (0,\infty]$ has the interpretation of inverse temperature, and $W_1,\ldots,W_N$ are independent Wiener processes in $\R^\d$. We refer to the introductions of \cite{JW2018, Serfaty2020, NRS2021, RS2021} and the surveys  \cite{JW2017_survey, CD2021, Golse2022ln} for a discussion of the motivation for and applications of these models. %Out of convenience, we introduce the vector field notation $\k \coloneqq \M\nabla\g$.
 
{Assuming that $\mu_N^\circ \coloneqq \frac1N\sum_{i=1}^N \delta_{x_i^\circ}$ suitably converges to $\mu^\circ$ as $N\rightarrow\infty$,} the empirical measure $\mu_N^t \coloneqq \frac1N\sum_{i=1}^N \delta_{x_i^t}$ is expected to converge to a solution of the \emph{mean-field equation}, which is the nonlinear PDE
\begin{equation}\label{eq:MFlim}
\begin{cases}
\p_t\mu + \div(\mu{\k}\ast\mu) = \frac1\beta\D\mu \\
\mu|_{t=0} = \mu^\circ,
\end{cases}
\qquad (t,x) \in \R_+ \times\R^\d.
\end{equation}
The \emph{mean-field limit} refers to this convergence. Related is the notion of \emph{propagation of chaos}, that  the $k$-point marginals of the law $f_N^t$ of the solution $\XN^t$ of \eqref{eq:SDE}, which satisfies the forward Kolmogorov equation
\begin{equation}\label{eq:FK}
\begin{cases}
\p_t f_N + \displaystyle\sum_{i=1}^N \div_{x_i}\Big(f_N \sum_{1\leq j\leq N : j\neq i} \k(x_i-x_j)\Big) = \frac1\beta\sum_{i=1}^N \Delta_{x_i}f_N \\
f_N|_{t=0} = f_N^\circ,
\end{cases}
\qquad (t,\XN) \in \R_+ \times (\R^\d)^N
\end{equation}
become $\mu^t$-chaotic as $N\rightarrow\infty$, where $\mu^t$ is a solution to \eqref{eq:MFlim}, assuming the initial law $f_N^\circ$ is $\mu^\circ$-chaotic. There is also the stronger notion of \emph{generation of chaos}, which asserts that $f_N^t$ becomes $\mu^t$-chaotic for large $N$ and $t$, even when $f_N^\circ$ is not $\mu^\circ$-chaotic. We refer to \cite{HM2014, RS2023lsi} for a more precise description of the relation between these notions.

The most powerful tools to prove mean-field limits/propagation of chaos when the potential $\g$ is singular, such as for the log/Riesz case, are the modulated energy  \cite{Serfaty2017, Duerinckx2016, Serfaty2020} (further developed in \cite{NRS2021}) and relative entropy of Jabin-Wang \cite{JW2016,JW2018} (previously, widely used for hydrodynamic limits and conservation laws, e.g. \cite{Yau1991, SR2009book} and \cite{DiPerna1979, Dafermos1979}), as well as their combination in the form of modulated free energy introduced by Bresch \emph{et al.} \cite{BJW2019crm, BJW2019edp, BJW2020}.\footnote{Other tools based on estimates for hierarchies have emerged recently {\cite{Lacker2023, LlF2023, BJS2022, hCR2023, BDJ2024}}. We will not discuss them in this work, but suffice it to say they are not as robust to the singularity of the interaction and temperature as modulated energy/entropy techniques.} We review these quantities in \cref{ssec:introME} below.

The modulated energy method is versatile to both the whole space and confined domains, such as the torus, at zero temperature $\be=\infty$ \cite{Duerinckx2016, Serfaty2020, NRS2021}. For positive temperature $\be<\infty$, a pure modulated energy approach was used to prove mean-field convergence by two of the authors \cite{RS2021} provided $\sf<\d-2$ (i.e., the interaction is sub-Coulomb). The approach works in both the whole space and the torus. However, for more singular interactions at positive temperature, one needs to utilize {relative} entropy, which has the disadvantage of requiring a strong regularity assumption on the solution of the limiting equation \eqref{eq:MFlim} in the form of $L^\infty$ estimates for the Hessian of the log density $\nabla^{\otimes2}\log\mu^t$.\footnote{The work of Lacker \cite{Lacker2023} is an exception in this regard.} On compact domains, such as the torus, these bounds are fairly straightforward, as one just needs to show that $\mu^t$ is positively bounded from below and establish $W^{2,\infty}$ bounds for $\mu^t$. However, in the full space $\R^\d$, the matter is more delicate, as no probability density can be bounded from below {by a positive constant}.

%Hereafter to, we restrict ourselves to the case where $\M$ is antisymmetric, i.e. the dynamics are Hamiltonian.

In recent work \cite{FW2023}, Feng-Wang extended the relative entropy method of \cite{JW2018} to prove local-in-time entropic propagation of chaos for the viscous vortex model, which corresponds to $\d=2$, $\M$ antisymmetric, and $\g(x)=-\log|x|$ in our notation.\footnote{We would expect that the result in fact holds for the Hamiltonian log case in any dimension $\d\ge 2$, as well.}\footnote{With this extension, one can also extend the Gaussian CLT for the fluctuations from \cite{WZZ2021} to the whole space. In the forthcoming \cite{HRSclt}, we use the results of the present paper to prove a CLT for the fluctuations of log/Riesz flows in the whole space.} The bulk of the work in \cite{FW2023} concerns establishing local-in-time spatially weighted  bounds for the solution of the mean-field equation up to second-order derivatives. For this, they rely on somewhat sophisticated tools for general parabolic equations; namely, Li-Yau  \cite{LY1986acta} and Hamilton-type \cite{Hamilton1993} estimates. In particular, there is the awkwardness of dealing with the solution having Gaussian-like spatial decay, hence the absolute log density grows quadratically as $|x|\rightarrow\infty$. Once one has these bounds, the remainder of the argument of \cite{JW2018} goes through.
 
The purpose of the present paper is to propose a simpler approach based on a transformation of the dynamics to self-similar coordinates, which converts all equations to ones with a quadratic confining potential. See the next subsection for further description. Such a transformation has been widely used in the PDE literature to study the asymptotic properties of dissipative equations (e.g., {\cite{Kapila1980, FMT1983, GK1987, Kavian1987, EK1988, MZ1997, GW2002, GW2005, BDP2006,BCL2009,BKM2010,CV2011asy,CD2014,SV2014}}). The relative entropy is invariant under this transformation, while the interaction potential $\g$ is renormalized by a time-dependent factor. In these coordinates, one can perform the relative entropy or modulated free energy method. Showing a closed estimate for the relative entropy or modulated free energy then reduces to $L^\infty$ bounds for the Hessian of the logarithm of the ratio of the mean-field density to a time-dependent thermal equilibrium measure. 

Such Hessian bounds (as well as higher-order derivatives) can be easily be proven, locally in time,  through a direct fixed point argument, energy estimates, or maximal principle, avoiding the appeals to more sophisticated tools, as in \cite{FW2023}.  With more work, building on ideas of the {authors and Chodron de Courcel} \cite{RS2021, CdCRS2023, CdCRS2023a}, one can show exponential-in-time rates of decay for this logarithmic ratio. This allows to show uniform-in-time propagation of chaos and even polynomial-in-time generation of chaos. The treatment of these regularity bounds and general analysis of the self-similar equation, for the first time covering both Hamiltonian and gradient drifts in the full range of  log and Riesz potentials, is the subject of the {companion paper} \cite{HRS2024confII}. %We then establish a relative entropy estimate . The invariance of the relative entropy under this scale transformation implies a relative entropy estimate in the original coordinates. Our approach even allows to establish relaxation estimates for the solution of the limiting equation that can be fed into the relative entropy method to obtain uniform-in-time propagation of chaos and even generation of chaos. In contrast, the results of \cite{FW2023} are strictly local in time.

\subsection{Modulated energy, entropy, and free energy}\label{ssec:introME}
Before proceeding further, we review the notions of modulated (free) energy.

Given $\mu\in\P(\R^\d)$, we define the modulated energy of the configuration $\XN \in (\R^\d)^N$ by 
\begin{equation}\label{eq:ME}
F_N(\XN, \mu) \coloneqq \hal \int_{(\R^{\d})^2\setminus\triangle} \g(x-y) d\Big(\frac1N\sum_{i=1}^N \delta_{x_i}- \mu\Big)^{\otimes 2} (x,y),
\end{equation}
where $\triangle$ denotes the diagonal of $(\R^\d)^2$.  This object first appeared as a next-order electric energy in \cite{SS2015,RS2016,PS2017} and was subsequently used in the dynamics context in \cite{Duerinckx2016,Serfaty2020} and following works---in the spirit of Brenier's modulated energy \cite{Brenier2000}.  The modulated energy {$F_N$} is the total interaction of the system of $N$ discrete charges located at $\XN$ against a negative (neutralizing) background charge $\mu$, with the infinite self-interaction of the points removed. As shown in the aforementioned prior works, $F_N$ is not necessarily positive (see \cref{lem:MEpos} below); however, it effectively acts as a squared distance between the empirical measure $\frac1N\sum_{i=1}^N \delta_{x_i}$ and $\mu$.

The normalized relative entropy between two probability densities $f_N, g_N$ on $(\R^\d)^N$ is given by\footnote{Throughout this paper, we abuse notation by using the same symbol to denote both a measure and its density.}
\begin{align}\label{eq:RE}
H_N(f_N\vert g_N)\coloneqq \frac{1}{N} \int_{(\R^\d)^N} \log\left(\frac{f_N}{g_N}\right) df_N.
\end{align}
Given a reference probability density $\mu$ on $\R^\d$, we may now define the modulated free energy, as introduced in \cite{BJW2019crm,BJW2019edp,BJW2020} by 
\begin{align}\label{eq:MFE}
E_N(f_N, \mu) \coloneqq \frac1\beta H_N(f_N\vert \mu^{\otimes N}) +\mathbb{E}_{f_N}\left[F_N(\XN,\mu)\right],
\end{align}
where $\mathbb{E}_{f_N}$ denotes the expectation with respect to the measure $f_N = \mathrm{Law}(X_N)$.

The average modulated energy satisfies the differential inequality (see \cite{RS2021})
\begin{multline}\label{eq:introdtME}
\frac{d}{dt}\E_{f_N^t}[\Fr_N(\XN,\mu^t)] \le \frac{1}{\be}\E_{f_N^t}\Bigg[\int_{(\R^\d)^2\setminus\triangle} \D\g(x-y)d\Big(\frac1N\sum_{i=1}^N \delta_{x_i}-\mu^t\Big)^{\otimes 2}(x,y)\Bigg]\\
+\E_{f_N^t}\Bigg[\frac1N\sum_{i=1}^N \PV \ \nabla\g\ast\Big(\frac1N\sum_{j=1}^N \delta_{x_j}-\mu^t\Big)(x_i) \cdot \PV \ \k\ast\Big(\frac1N\sum_{j=1}^N \delta_{x_j}-\mu^t\Big)(x_i)\Bigg] \\
- \frac12\E_{f_N^t}\Bigg[\int_{(\R^\d)^2\setminus\triangle}(u^t(x)-u^t(y))\cdot \nabla\g(x-y)d\Big(\frac1N\sum_{i=1}^N \delta_{x_i}-\mu^t\Big)^{\otimes 2}(x,y)\Bigg],
\end{multline}
where $\PV$ denotes principal value, $u^t \coloneqq -\k\ast\mu^t$. If $\sf<\d-2$, then the first term on the right-hand side is nonpositive up to $o_N(1)$ error, while the second term is nonpositive (vanishes if $\M$ is antisymmetric). The third term, called the commutator term, may be estimated in terms of the modulated energy using a functional inequality \cite{LS2018, Serfaty2020, Rosenzweig2020spv, Serfaty2023, Rosenzweig2021ne, NRS2021, RS2021} which bounds it by $ C \|\nabla u^t\|_{L^\infty} F_N(\XN, \mu^t)$. This allows to close the Gr\"onwall loop for $\E_{f_N^t}[\Fr_N(\XN,\mu^t)]$, as was done in \cite{RS2021}. Importantly, one only needs to control $\|\nabla u^t\|_{L^\infty}=  \|\nabla\k\ast\mu^t\|_{L^\infty}$,\footnote{In the Coulomb case, this can be weakened to controlling just $\|\mu^t\|_{L^\infty}$ \cite{Rosenzweig2022,Rosenzweig2022a}.} which is well-established in the PDE literature, e.g. \cite{Wolibner1933, Yudovich1963, CMT1994, CW1999qg, LZ2000, BKM2010, CCCGW2012, BLL2012, SV2014, BIK2015, CJ2021}. If $\sf\ge \d-2$, then the first term is no longer essentially nonpositive, and it is not clear how to proceed.

On the other hand, the relative entropy (see \cite{JW2018}) satisfies the differential inequality
\begin{multline}\label{eq:introdtRE}
\frac{d}{dt}H_N(f_N^t \vert (\mu^t)^{\otimes N}) \le  -\frac{1}{\beta}I_N({f}_N^{t} \vert (\mu^{t})^{\otimes N})\\
+\frac12\E_{f_N^t}\Big[\int_{(\R^\d)^2\setminus\triangle}\Big(\nabla\log(\mu^t)(x)-\nabla\log(\mu^t)(y)\Big)\cdot\k(x-y)\Big(\frac1N\sum_{j=1}^N\delta_{{x}_j} - \mu^{t}\Big)^{\otimes 2}(x,y)\Big]\\
+\E_{f_N^t}\Big[\int_{(\R^\d)^2\setminus\triangle} \div\k(x-y)\Big(\frac1N\sum_{j=1}^N\delta_{{x}_j} - \mu^{t}\Big)^{\otimes 2}(x,y) \Big],
\end{multline}
where $I_N$ denotes the $1/N$ normalized relative Fisher information, which is nonnegative. The simplest setting is when $\div\k=0$, which holds if $\M$ is antisymmetric, removing the third line. The difficulty in establishing a Gr\"{o}nwall relation is to control the second line, which again is a commutator term, in terms of the relative entropy. This can be done, as was shown in \cite{JW2018} (see \cref{lem:REcomm} below), but only when $\sf=0$. More importantly for our purposes, this requires control of $\|\nabla^{\otimes 2}\log\mu^t\|_{L^\infty}$ which is not obvious to establish in the whole space, given the decay of solutions as $|x|\rightarrow\infty$. For instance, if $\mu^t(x) \approx e^{-c|x|^{2+\epsilon}}$, then such a bound would be impossible.

As observed first by  Bresch \emph{et al.} \cite{BJW2019crm, BJW2019edp,BJW2020}, combining the relations \eqref{eq:introdtME}, \eqref{eq:introdtRE}, one sees that in the gradient case $\M=-\I$, the first term on the right-hand side of \eqref{eq:introdtME} cancels with $\frac1\be$ times the last term on the right-hand side of \eqref{eq:introdtRE}.  The second term term in \eqref{eq:introdtME} may be recombined with the relative Fisher information term in \eqref{eq:introdtRE} into a total nonpositive expression, leading to
\begin{multline}\label{eq:introdtMFE}
\frac{d}{dt}E_N(f_N^t, \mu^t) \le  -\frac{1}{\be^2}{I_N(f_N^t \vert \Q_{N,\be}(\mu^t))} \\
-\frac12\E_{f_N^t}\Big[\int_{(\R^\d)^2\setminus\triangle}\Big(u^t(x)-u^t(y)\Big)\cdot\nabla\g(x-y)\Big(\frac1N\sum_{j=1}^N\delta_{{x}_j} - \mu^{t}\Big)^{\otimes 2}(x,y) \Big],
\end{multline}
where now $u^t \coloneqq \frac1\be\nabla \log\mu^t + \nabla\g\ast\mu^t$ {and
\begin{align}
d\Q_{N,\be}(\mu^t)(\XN) \coloneqq \frac{1}{\K_{N,\be}(\mu^t)}e^{-\be N\Fr_N(\XN, \mu^t)}d(\mu^t)^{\otimes N}(\XN)
\end{align}
is the $\mu^t$-modulated Gibbs measure, with $\K_{N,\be}(\mu^t)$ the normalization factor or partition function.} To establish a Gr\"{o}nwall relation, one again needs to just estimate the commutator term in terms of the relative entropy, modulated energy, or a linear combination of the two,\footnote{In this regard, we are strongly using the repulsive nature of the interaction.} which can be done with the aforementioned functional ``commutator" inequality. This comes at the cost of needing a bound for $\|\nabla u^t\|_{L^\infty}$, which, by triangle inequality, again requires a bound for $\|\nabla^{\otimes 2}\log\mu^t\|_{L^\infty}$; the second term  {in $\nabla u^t$} can be straightforwardly estimated in terms of $\|\mu^\circ\|_{W^{2,\infty}}$, so there is no cancellation between the entropic and potential terms to be exploited.

The punchline of the preceding discussion is that entropic terms require $\dot{W}^{2,\infty}$ control on $\log\mu^t$, which is not guaranteed in the whole space for an arbitrary smooth, rapidly decaying function. However, this difficulty is, in some sense, an artifact of being in the wrong frame of reference: one has to take advantage of the asymptotic form of solutions to equation \eqref{eq:MFlim} in the large time limit. This leads us to the self-similar transformation.

\subsection{The self-similar coordinates}\label{ssec:intross}
Let $\mu^t$ be a solution of the mean-field equation \eqref{eq:MFlim}. For $x\in\R^\d$ and $t\ge 0$, self-similar coordinates are defined by
\begin{align}
\xi \coloneqq \frac{x}{\sqrt{t+1}}, \ \tau \coloneqq \log(t+1) \qquad &\Longleftrightarrow  \qquad x = e^{\tau/2}\xi, \ t = e^{\tau} -1 \label{eq:mfssvar} \\
\mu(t,x) \eqqcolon (t+1)^{-\d/2} \bmu(\log(t+1), x/\sqrt{t+1}) \qquad &\Longleftrightarrow \qquad e^{\d\tau/2}\mu(e^{\tau}-1, e^{\tau/2}\xi) = \bar{\mu}(\tau,\xi). \label{eq:bmudef}
\end{align}
Note that this transformation is mass-preserving. From the chain rule (see \cref{lem:sseqns} below), one checks that $\bmu^\tau$ solves the \emph{self-similar mean-field equation}
\begin{equation}\label{eq:MFlimss}
\begin{cases}
\p_{\tau}\bmu + \div\left(\bmu (\k_\tau\ast\bmu {-} \nabla(\frac14 |\xi|^2))\right) = \frac1\beta
\Delta\bmu\\
\bar\mu|_{\tau=0} = \mu^\circ
\end{cases}
\qquad (\tau, \xi) \in \R_+ \times \R^\d,
\end{equation}
where $\g_\tau \coloneqq e^{-\frac{\sf\tau}{2}}\g$ is the \emph{renormalized interaction potential} and $\k_\tau \coloneqq \M\nabla\g_\tau$.\footnote{We can also view it as an equation with the diffusion operator replaced by $\L_\be ^*\bmu \coloneqq \frac1\beta\Delta\bmu + \frac12\div(\xi\bmu)$, where $\L_\be^*$ is the \emph{Fokker-Planck operator}. It is customary to use the notation $\L_\be$ for the generator of the stochastic process of which $\bmu$ is the law, i.e. the Ornstein-Uhlenbeck operator. The Fokker-Planck operator is the adjoint of this operator with respect to the standard $L^2$ inner product, hence the notation $\L_\be^*$.} This way, we see the confinement potential $\frac14|\xi|^2$ appear. Remark that  $\g_0 = \g$ and if $\sf=0$, then $\g_\tau = \g$ for all $\tau\ge 0$.  %Using that $\Delta f = \div(f\nabla\log f)$, equation \eqref{eq:MFlimss} may be written in more compact form as the conservation law
%\begin{align}
%\p_{\tau}\bar\mu = \div(\bar{u}\bar\mu), \qquad \bar{u} \coloneqq \frac1\beta\nabla\log\bar\mu - \k_\tau\ast\bar\mu + \frac12\xi.
%\end{align}
Similarly, if $f_N^t$ is a solution of the forward Kolmogorov equation \eqref{eq:FK}, then the {density}
\begin{align}\label{eq:bfNdef}
\bar{f}_N(\tau,\Xi_N) = e^{\d N\tau/2}f_N(e^{\tau}-1, e^{\tau/2}\Xi_N)
\end{align}
satisfies the \emph{self-similar forward Kolmogorov equation}
\begin{align}\label{eq:FKss}
\begin{cases}
\p_\tau \bar{f}_N + \displaystyle\sum_{i=1}^N \div_{\xi_i}\Big(\bar{f}_N \Big(\sum_{j\neq i} \k_\tau(\xi_i-\xi_j) {- \nabla_{\xi_i}(\frac14|\Xi_N|^2)}\Big)\Big) = \frac1\beta\D_{\Xi_N} \bar{f}_N \\
\bar{f}_N |_{t=0} = f_N^\circ,
\end{cases}
\qquad (\tau,\Xi_N) \in \R_+ \times (\R^\d)^N.
\end{align}
Thus, our original problem has been converted to one with a confining potential, at the cost of renormalizing the interaction potential by a time-dependent factor.

\begin{remark}\label{rem:rad}
If $\M$ is antisymmetric and $\bmu$ is radial, then the nonlinear term $\div(\bmu\k_{\tau}\ast\bmu) =0$ and the equation reduces to the linear Fokker-Planck equation. Equivalently, under the same assumptions equation \eqref{eq:MFlim} becomes the linear heat equation.
\end{remark}

The relative entropy is invariant under changing to self-similar coordinates. In contrast, the modulated energy transforms to 
\begin{align}\label{eq:bFNtaudef}
\bar{F}_N^\tau(\Xi_N,\bmu) + \frac{\tau}{4N}\indic_{\sf=0} &\coloneqq \int_{(\R^\d)^2\setminus\triangle}\g_\tau(\xi-\eta)d\Big(\frac1N\sum_{i=1}^N \delta_{\xi_i}-\bmu\Big)^{\otimes 2}(\xi,\eta) + \frac{\tau}{4N}\indic_{\sf=0} \nn\\
&= e^{-\sf\tau/2}\Fr_N(\Xi_N,\bmu)  + \frac{\tau}{4N}\indic_{\sf=0},
\end{align}
and therefore the modulated free energy transforms to
\begin{align}
\bar{E}_N^\tau(\bar{f}_N^\tau, \bmu^\tau) + \frac{\tau}{4N}\indic_{\sf=0} \coloneqq \frac1\be H_N(\bar{f}_N \vert \bmu^{\otimes N}) +  \E_{\bar{f}_N}\Big[\bar{\Fr}_N^\tau(\Xi_N,\bmu)\Big] + \frac{\tau}{4N}\indic_{\sf=0}.
\end{align}
We refer to \Cref{lem:REss,lem:MFEss} for the detailed computation. Just as with \eqref{eq:introdtME}, \eqref{eq:introdtRE}, \eqref{eq:introdtMFE}, each of these transformed quantities satisfies a dissipation identity amenable to a Gr\"onwall argument.

Focusing, in the interests of brevity, on just the modulated free energy when $\M=-\I$, one has (see \cref{lem:MFEdiss})
\begin{multline}\label{eq:introdtaubEN}
\frac{d}{d\tau}\bar{E}_N^\tau(\bar{f}_N^\tau, \bmu^\tau) \le -\frac{1}{\be^2}I_N(\bar{f}_N^\tau \vert \Q_{N,\be}^\tau(\bmu^\tau))- \frac{\sf}{2}\E_{\bar{f}_N^\tau}\Big[\bFr^\tau(\Xi_N,\bmu^\tau)\Big]  \\
-\frac12\E_{\bar{f}_N^\tau}\Big[\int_{(\R^\d)^2\setminus\triangle}(\bar{u}^\tau(\xi)-\bar{u}^\tau(\eta))\cdot\nabla\g_\tau(\xi-\eta)d\Big(\frac1N\sum_{i=1}^N \delta_{\xi_i}-\bmu^\tau\Big)^{\otimes 2}(\xi,\eta)\Big],
\end{multline}
where $\mathbb{Q}_{N,\be}^{\tau}(\bmu^\tau)$ is the modulated Gibbs measure given by
\begin{align}\label{eq:mGM}
d\mathbb{Q}_{N,\be}^{\tau}(\bmu^\tau)(\Xi_N) \coloneqq \frac{1}{\mathsf{K}_{N,\be}^\tau(\bmu^\tau)}e^{-\be N\bar{F}_N^\tau(\Xi_N,\bmu^\tau)}d(\bmu^\tau)^{\otimes N}(\Xi_N),
\end{align}
with partition function $\mathsf{K}_{N,\be}^\tau(\bmu^\tau)$, and the vector field $\bar{u}^\tau \coloneqq \frac1\be\nabla\log\bmu^\tau + \frac12\xi + \nabla\g\ast\bmu^\tau$ has gained the contribution of the confining potential $\frac14|\xi|^2$. To estimate the commutator term in \eqref{eq:introdtaubEN}, one again wants to use the aforementioned functional inequality, which requires bounding $\|\nabla \bar{u}^\tau\|_{L^\infty}$.
To do so,  one should not try to estimate each of the three terms in the expression for $\nabla\bar{u}^\tau$ separately. Rather, one should exploit a cancellation in their combination described as follows.

Since the relative entropy is strictly convex and $\hat{\g}\ge 0$, the time-dependent macroscopic free energy
\begin{align}\label{MFenergy}
\bar{\Ec}_\beta^\tau(\bmu) \coloneqq \hal \int_{(\R^\d)^2} \g_\tau(\xi-\eta) d\bmu(\xi) d\bmu(\eta)+ \int_{\R^\d}\frac14|\xi|^2 d\bmu(\xi)  + \frac{1}{\beta}\int_{\R^\d}(\log \bmu) d\bmu
\end{align}
is strictly convex and admits a unique minimizer $\bmu_\be^\tau$ in $\P(\R^\d)$, characterized by the relation
\begin{align}\label{eq:bmubetaurel}
\frac1\be\log\bmu_\be^\tau + \frac14|\xi|^2+ \g_\tau\ast\bmu_\be^\tau = Z_{\be}^\tau,
\end{align}
where $Z_{\be}^\tau \in\R$. Remark that if  $\sf=0$, then $\bar{\Ec}_\be^\tau$ is independent of time, and $\bmu_\be^\tau$ is the usual thermal equilibrium measure, which is the unique stationary state of equation \eqref{eq:MFlimss} among probability measures with finite free energy. We refer to \cite[Chap. 2]{SerfatyLN} for a description of the thermal equilibrium measure.  If on the other hand  $\sf>0$, then $\g_\tau \to 0$  as $\tau\rightarrow\infty$, hence  $\bmu_\be^\tau$ converges to the Gaussian
\begin{align}
d\bmu_\beta^\infty(\xi) \coloneqq (4\pi/\beta)^{-\frac{\d}{2}}e^{-\beta |\xi|^2/4}d\xi,
\end{align}
which minimizes the sum of the last two terms in \eqref{MFenergy}.

Taking the gradient of the left-hand side of the relation \eqref{eq:bmubetaurel}, which is zero, and subtracting from $\bar{u}^\tau$,  we observe the cancellation
\begin{align}
\bar{u}^\tau = \frac1\be\nabla\log\frac{\bmu^\tau}{\bmu_\be^\tau} + \nabla\g\ast(\bmu^\tau-\bmu_\be^\tau).
\end{align}
Each of the two terms may now be estimated separately. The second term is straightforward, while the first term is more subtle, as discussed below. %For completeness, we mention here that in the Hamiltonian case, one may simply consider $\log\frac{\bmu^\tau}{\bmu_\be^\infty}$.

%Critical points satisfy the \emph{Kirkwood-Monroe} equation, which we write in the form
%\begin{equation}\label{charactem}
%\g * \mu + \frac14|\xi|^2 + \frac{1}{\beta} \log \mu=c \quad \text{in}  \ \R^\d,
%\end{equation}
%where the constant $c \in\R$ enforces that $\mu$ is a probability measure. By strict convexity of the free energy, it is classical this equation has a unique solution $\mu_\be$, hence the unique minimizer of $\Ec_\be$ among all probability measures , which is $C^\infty$, strictly positive, and decays like $e^{-\frac{\be}{4}|\xi|^2}$ as $|\xi|\rightarrow \infty$.

%If $\M$ is antisymmetric, then the Gaussian  is a stationary solution to \eqref{eq:MFlimss}. Moreover, one can show that it is the unique stationary solution among all sufficiently regular solutions. 

%The self-similar forward Kolmogorov equation is the gradient flow of the microscopic free energy

%The unique minimizer of this and stationary solution of \eqref{?} is the Gibbs measure $d\mathbb{P}_{N,\beta}(\XN)= \frac{1}{Z_{N,\beta}} e^{-\beta \mathcal H_N(\XN)} dx_1\dots x_N$, where $

%The right quantity to look at is not $\log\bmu^\tau$ but rather the ratio $\log\frac{\bmu^\tau}{\bmu_\be^\tau}$, where $\mu_\be^\tau$ is the (necessarily unique) minimizer among all probability measures of the time-dependent free energy

%Evidently, if $\sf>0$, then $\mu_\be^\tau$ converges to the Gaussian as $\tau\rightarrow\infty$. With an abuse of notation, $\mu_\be^\tau$ will be understood as just the Gaussian if $\M$ is antisymmetric.

\subsection{Main result}\label{ssec:introMR}
We come to the main result. In a nutshell, we show that if the self-similar mean-field equation \eqref{eq:MFlimss} admits a solution $\bmu^\tau$, such that the Hessian $\nabla^{\otimes 2}\log\frac{\bmu^\tau}{\bmu_\be^\tau}$ is in $L^\infty$ uniformly in $\tau \in [0,\log(T+1)]$, then the following holds:
\begin{enumerate}[(1)]
\item if $\sf=0$ and $\M$ is antisymmetric, the original relative entropy $H_N(f_N^t\vert (\mu^t)^{\otimes N})$ is bounded by $C_T(H_N(f_N^0 \vert (\mu^0)^{\otimes N}) + o_N(1))$ on {$[0,T]$};
\smallskip
\item if $\sf<\d-2$ and $\M$ is antisymmetric or $\sf<\d$ and $\M=-\I$, the modulated free energy $E_N(f_N^t \vert (\mu^t)^{\otimes N})$ is bounded by $C_T(E_N(f_N^0,\mu^0) + o_N(1))$ on {$[0,T]$};\smallskip
\item if $\int_0^\infty \|\nabla^{\otimes 2}\log\frac{\bmu^\tau}{\bmu_\be^\tau}\|_{L^\infty}d\tau<\infty$, then the preceding estimates hold uniformly in time;\smallskip
\item  if for $\sf<\d-2$ and $\M$ antisymmetric,  $\|\log\frac{\bmu^\tau}{\bmu_\be^\infty}\|_{L^\infty}$ is uniformly bounded in time, then the prefactor $C_T$ in point (1) vanishes polynomially fast as $T\rightarrow\infty$;\smallskip
\item if for $ \sf<\d$ and $\M = -\I$, the modulated Gibbs measure $\mathbb{Q}_{N,\be}^\tau(\bmu^\tau)$ from \eqref{eq:mGM} satisfies a logarithmic Sobolev inequality (LSI) uniformly in $N$ and $\tau$ (a uniform modulated LSI in the language of \cite{RS2023lsi}), then the dependence on the initial modulated free energy in point (2) vanishes polynomially fast as $T\rightarrow\infty$.
\end{enumerate}
As is well-known, by subadditivity of the relative entropy and the almost positivity of the modulated energy, the first two assertions imply propagation of chaos on the interval $[0,T]$. The third assertion implies uniform-in-time propagation of chaos. While the fourth and fifth assertions imply generation of chaos. The important takeaway is that the problem of propagation and generation of chaos in the whole space is reduced to a pure regularity question for a nonlinear Fokker-Planck equation, which can be established with a little effort. See \cref{ssec:introreg} below for further discussion.

\begin{mthm}\label{thm:main}
Suppose that equation \eqref{eq:MFlimss} admits a solution $\bmu\in C_w([0,T], \P(\R^\d))$, the space of functions on $[0,T]$ taking values in $\P(\R^\d)$ and continuous with respect to weak convergence, such that $\|\nabla^{\otimes 2}\log\frac{\bmu}{\bmu_\be}\|_{L^\infty([0,T], L^\infty)} < \infty$. Let $f_N^t$ be an entropy solution to \eqref{eq:FK}.\footnote{In fact, the SDE system \eqref{eq:SDE} has a unique strong solution if $\sf\le \d-2$ \cite[Section 4]{RS2021}. In the gradient case, this result is extendable to the full range $\sf<\d$ \cite{hC2022}.} Define the quantity
\begin{align}
\Ec_N(f_N^t,\mu^t) \coloneqq E_N(f_N^t,\mu^t) + \os_N^t(1) \ge 0,
\end{align}
where $\os_N^t(1)$ is a correction to obtain a nonnegative quantity and depends on $\mu^t$ only through $\|\mu^t\|_{L^\infty}$ and vanishes as $N\rightarrow\infty$. See \eqref{eq:osNdef} for the precise definition. Also, introduce the seminorm
\begin{align}
\|v\|_{*} \coloneqq \begin{cases} \|\nabla v\|_{L^\infty}, & {\sf\geq \d-2} \\ \|\nabla v\|_{L^\infty} + \|(-\Delta)^{\frac{\d-\sf}{4}}v\|_{L^{\frac{2\d}{\d-2-\sf}}}, & {\sf<\d-2}. \end{cases}
\end{align} 
Then the following hold.

\textbullet \ Suppose that $\M$ is antisymmetric. If $\sf=0$, then for $t\in [0,T]$,
\begin{multline}\label{eq:mainRE}
H_N({f}_N^{t} \vert (\mu^{t})^{\otimes N}) \le {\exp\Big({\int_0^{\log(t+1)}\Big(\|\nabla^{\otimes 2}\log\frac{\bmu^{\tau'}}{\bmu_\be^\infty}\|_{L^\infty} - e^{-2\|\log\frac{\bmu^{\tau'}}{\bmu_\be^\infty}\|_{L^\infty}}\Big)d\tau'  }\Big)} H_N(f_N^0 \vert (\mu^0)^{\otimes N})  \\
+ \frac{C}{N}\int_0^{\log(t+1)} {\exp\Big({\int_{\tau'}^{\log(t+1)}\Big(\|\nabla^{\otimes 2}\log\frac{\bmu^{\tau''}}{\bmu_\be^\infty}\|_{L^\infty} - e^{-2\|\log\frac{\bmu^{\tau''}}{\bmu_\be^\infty}\|_{L^\infty}}\Big)d\tau''  }\Big)}\| \nabla^{\otimes 2}\log\frac{\bmu^{\tau'}}{\bmu_\be^\infty}\|_{L^\infty}   d\tau',
\end{multline}
where $C>0$ is a constant depending only on $\d$. Define
\begin{align}
\ka \coloneqq \inf_{[0,\log(T+1)]} \min\Big(e^{-2\|\log\frac{\bmu^{\tau'}}{\bmu_\be^\infty}\|_{L^\infty}}, \frac{\sf}{4}\Big) \in [0,1).
\end{align}
If $0<\sf<\d-2$, then there exists $\al=\al(\d,\sf)>0$ such that for $t\in [0,T]$,
\begin{multline}\label{eq:mainMFE1}
{\Ec}_N({f}_N^t, \mu^t)  \le  \frac{e^{C\int_0^{\log(t+1)}  \|\bar{u}^{\tau'}\|_{*}d\tau'}}{(t+1)^{\ka}}\Bigg[{\Ec}_N({f}_N^0, \mu^0) + C^t[1-(t+1)^{-\frac{\sf}{4}}]N^{-\al}\\
+\frac{C^t[1-(t+1)^{-\frac{\sf}{4}}]}{\be\sf} N^{-\al}\Bigg],
\end{multline}
where $C>0$ depend only on $\d,\sf$ and $C^t$ additionally depends on $t$ only through $\|\bmu^\tau\|_{L^\infty}$ on the interval $[0,\log(t+1)]$.

\textbullet \ Suppose that $\M=-\I$ and $0\le \sf<\d$. Let $C_{LS,N}^\tau \in [0,\infty]$ denote the LSI constant of the modulated Gibbs measure ${\Q}_{N,\be}^\tau(\bmu^\tau)$, and set $\ka\coloneqq \inf_{\tau\ge 0} \frac{1}{C_{LS,N}^\tau\be}$. Then
\begin{multline}\label{eq:mainMFE2}
\Ec_N(f_N^t,\mu^t) \le {e^{C\int_0^{\log(t+1)} \|\bar{u}^{\tau'}\|_{*}d\tau'}}\Bigg[\frac{\Ec_N(f_N^0,\mu^0)}{(t+1)^{\ka}} + {C^t[1-(t+1)^{-\ka}]}{N}^{-\al}\\
+ \frac{C^t\ka\be[(t+1)^{\ka-\frac{\sf}{2}}-1]}{(\ka-\frac{\sf}{2})(t+1)^{{\ka}}}N^{-\al} +\frac{C^t\sf[(t+1)^{\ka-\frac{\sf}{2}}-1]}{(\ka-\frac{\sf}{2})(t+1)^{{\ka}}}N^{-\al}\Bigg],
\end{multline} 
where $\al,C,C^t$ are as above. 
\end{mthm}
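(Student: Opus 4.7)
The plan is to perform the entire analysis in the self-similar variables $(\tau,\xi)$, where the transformed equations \eqref{eq:MFlimss}--\eqref{eq:FKss} contain the confining drift $-\tfrac12\xi$ and the renormalized interaction $\g_\tau$. In these coordinates, the relative entropy is invariant while the modulated energy satisfies the transformation \eqref{eq:bFNtaudef}, so it suffices to establish a closed Grönwall relation for the self-similar quantities and then revert via $t = e^{\tau}-1$ to obtain \eqref{eq:mainRE}--\eqref{eq:mainMFE2}. The starting point is the dissipation identity \eqref{eq:introdtaubEN} for the self-similar modulated free energy (and its analog for pure relative entropy in the antisymmetric $\sf=0$ case), in which the confining drifts cancel between the microscopic and mean-field pieces.

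The only non-sign-definite contribution on the right-hand side is the commutator against $\nabla\g_\tau$ weighted by differences of $\bar{u}^\tau$ (respectively $\nabla\log\bmu^\tau$). For the modulated free energy, this commutator is bounded via the sharp functional inequality of \cite{LS2018,Serfaty2020,NRS2021,RS2021} by $C\|\bar{u}^\tau\|_{*}\cdot(\bFr^\tau + N^{-\al})$; for the pure relative entropy in the log case, we instead invoke the law-of-large-numbers estimate of \cite{JW2018}, which requires control of $\|\nabla^{\otimes 2}\log\bmu^\tau\|_{L^\infty}$. The essential observation is the decomposition
\begin{align*}
\bar{u}^\tau = \tfrac{1}{\be}\nabla\log\tfrac{\bmu^\tau}{\bmu_\be^\tau} + \nabla\g\ast(\bmu^\tau - \bmu_\be^\tau),
\end{align*}
obtained by subtracting the Euler--Lagrange relation \eqref{eq:bmubetaurel}, which converts the uncontrollable $\tfrac12\xi$ drift into the gradient of a bounded log-ratio. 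The first piece is exactly what is controlled by the hypothesis $\|\nabla^{\otimes 2}\log(\bmu/\bmu_\be)\|_{L^\infty}$, and the second is bounded by singular integral theory in terms of $\|\bmu^\tau\|_{L^\infty}$ together with the explicit regularity of the Gaussian-tailed $\bmu_\be^\tau$. For the pure relative entropy case the analogous step uses $\bmu_\be^\infty$ in place of $\bmu_\be^\tau$, with $\nabla^{\otimes 2}\log\bmu_\be^\infty = -\tfrac{\be}{2}\I$ being a harmless constant shift.

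To upgrade to uniform-in-time and polynomial-in-time decay, one exploits the strictly negative terms already present in the dissipation. For item (4), the Gaussian LSI of $\bmu_\be^\infty$ combined with the Holley--Stroock bounded perturbation principle yields an LSI for $(\bmu^\tau)^{\otimes N}$ whose constant is degraded by $e^{2\|\log(\bmu^\tau/\bmu_\be^\infty)\|_{L^\infty}}$; the $-\tfrac{1}{\be}I_N$ dissipation thereby upgrades to $-e^{-2\|\log(\bmu^\tau/\bmu_\be^\infty)\|_{L^\infty}} H_N$, producing precisely the exponent in \eqref{eq:mainRE}. For item (5), the uniform modulated LSI hypothesis plays the analogous role for the full modulated free energy, while the explicit $-\tfrac{\sf}{2}\E_{\bar{f}_N^\tau}[\bFr^\tau]$ term in \eqref{eq:introdtaubEN} provides the complementary dissipation on the modulated energy portion when $\sf>0$. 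Exponential decay in $\tau$ then translates, under $\tau = \log(t+1)$, into the polynomial factor $(t+1)^{-\ka}$.

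The main obstacle is that the commutator bound cannot be closed by estimating the three constituents of $\bar{u}^\tau$ (namely $\tfrac1\be\nabla\log\bmu^\tau$, $\tfrac12\xi$, and $\nabla\g\ast\bmu^\tau$) separately, since the first two are each unbounded on $\R^\d$; the cancellation against the thermal equilibrium $\bmu_\be^\tau$ is essential and is the conceptual payoff of the self-similar reduction. A secondary technical point is to account for the $N^{-\al}$ lower-order errors from the functional inequalities so that $\Ec_N = E_N + \os_N^t(1) \ge 0$ is manifestly nonnegative (with $\os_N^t$ depending on $\mu^t$ only through $\|\mu^t\|_{L^\infty}$) and to track how these errors propagate through the Grönwall loop to produce the $C^t N^{-\al}$ remainders in \eqref{eq:mainMFE1}--\eqref{eq:mainMFE2}.
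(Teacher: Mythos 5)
Your proposal is correct and follows essentially the same route as the paper: self-similar transformation, the dissipation identities for the transformed relative entropy/modulated free energy, the Jabin--Wang and commutator functional inequalities, the cancellation of the confining drift via the Euler--Lagrange relation \eqref{eq:bmubetaurel} for $\bmu_\be^\tau$, Holley--Stroock/tensorized LSI and the uniform modulated LSI for the decay rates, and the almost-positivity corrections before reverting to $(t,x)$ coordinates. The only slight difference is bookkeeping: in the gradient case the paper discards the $-\tfrac{\sf}{2}\E_{\bar f_N^\tau}[\bar F_N^\tau]$ term (up to almost-positivity errors) and lets the modulated LSI control the full free energy, reserving the $\sf$-dissipation (as $-\tfrac{\sf}{4}\E[\bar F_N^\tau]$ combined with Holley--Stroock on the entropy part) for the antisymmetric Riesz case \eqref{eq:mainMFE1}, whereas you attach it to the gradient case --- either allocation closes the Gr\"onwall loop.
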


A sharper version of the estimates in \cref{thm:main}, in particular the explicit form of the exponent $\al$, is contained in the statements of \Cref{prop:REgron,prop:MFEgron1,prop:MFEgron2}. We have opted to present here a cruder form so as to make the result easier to digest.

%In particular, if $\int_0^\infty \|\bar{u}^{\tau'}\|_{L^\infty}d\tau'<\infty$, then we have a uniform-in-time estimate for $\Ec_N(f_N^t, \mu^t)$.

We have essentially sketched the proof of \cref{thm:main} in the previous subsection. After passing to self-similar coordinates, one just needs to establish a Gr\"{o}nwall relation from the dissipation inequality following \cite{JW2018} (relative entropy) and  \cite{BJW2019edp, Serfaty2020} (modulated free energy). Once such a closed estimate for the self-similar relative entropy or modulated free energy has been obtained, one simply reverts back to the original coordinate system, using the invariance properties of these quantities. By carefully tracking the dependence of the estimates on the mean-field density, one obtains the uniformity in time. To obtain generation of chaos, one exploits the relative Fisher information in the form of an LSI for $(\bmu^\tau)^{\otimes N}$ (Hamiltonian) or $\mathbb{Q}_{N,\be}^\tau(\bmu^\tau)$ (gradient), as was done in the Hamiltonian $\sf=0$ case in \cite{GlBM2021} and the general gradient Riesz case in \cite{RS2023lsi}. This yields an exponential-in-$\tau$ decay on the dependence of the initial relative entropy or modulated free energy. Returning to the original coordinate system, this decay becomes polynomial in $t$. We mention that while an LSI for $(\bmu^\tau)^{\otimes N}$ is easy to establish by tensorization and perturbing around the Gaussian LSI \cite{Gross1975}, an LSI for $\mathbb{Q}_{N,\be}^\tau(\bmu^\tau)$ is quite difficult to establish, even locally in $N$ and $\tau$, and is only known in the $\d=1$ log/Riesz case \cite{CL2020,RS2023lsi}, thanks to the convexity of the interaction. 

We close this subsection with the following additional remarks. 

\begin{remark}
As pointed out to the first author by Daniel Lacker, one could relax the finite initial relative entropy assumption in \cref{thm:main}, since the regularizing properties of the flow ensure this for positive times. 
\end{remark}

\begin{remark}\label{rem:MFEham}
Implicit in \cref{thm:main} is a new  extension of the modulated free energy to the Hamiltonian case, which works for both the whole space or torus. This allows to obtain (polynomial-in-time) generation of chaos in the Hamiltonian case $\sf\in (0,\d-2)$ through modulated free energy, conditional on a uniform-in-$\tau$ bound for $\|\log\frac{\bmu^\tau}{{\bmu_\be^\infty}}\|_{L^\infty}$ and an exponential-in-$\tau$ rate of decay for $\|\nabla^{\otimes 2}\log\frac{\bmu^\tau}{\bmu_\be^\tau}\|_{L^\infty}$. Previously, generation of chaos was only known for the $\sf=0$ case on the torus using a pure relative entropy approach \cite{GlBM2021}. The needed regularity/decay bounds are the subject of \cite{HRS2024confII}, but we mention that these bounds were already established on the torus by the authors and Chodron de Courcel in \cite{CdCRS2023}. Consequently, by following the argument of this paper, one has a complete proof of (exponential-in-time) generation chaos for the Hamiltonian case $\sf<\d-2$ on the torus through modulated free energy. See \cref{rem:MFEhamLSItorus} below for further elaboration. {We remark that, as shown in \cite{RS2021}, modulated energy alone suffices for the Hamiltonian case $\sf\in (0,\d-2)$, uniformly in time. But this only yields propagation of chaos in a weak sense (i.e., convergence in negative-order Sobolev norms), as opposed to the stronger entropic sense. Moreover, it is not clear how to establish generation of chaos using only modulated energy.} 
\end{remark}

\begin{remark}\label{rem:atrct}
One could ask whether a version of \cref{thm:main} holds for the attractive log gas, which is the only attractive case in our family of interactions for which propagation of chaos is known, in particular, for which modulated free energy is implementable \cite{BJW2019crm, CdCRS2023a}. The self-similar transformation works just as well. But the real issue is the extension to the whole space of the modulated logarithmic HLS inequality, needed to establish the coercivity of the modulated free energy. This is an interesting problem we shall consider elsewhere.
\end{remark}

\subsection{Regularity estimates}\label{ssec:introreg}
As previously stated, \cref{thm:main} yields propagation of chaos for the system \eqref{eq:SDE} in the whole space, conditional on solutions of the self-similar mean-field equation \eqref{eq:MFlimss} satisfying certain regularity estimates. Uniform bounds on $\|\bmu^\tau\|_{L^\infty}$ are an elementary consequence of the ultracontractivity for the original $\mu^t$, converted to self-similar coordinates, shown in \cite{CL1995} (Hamiltonian) and \cite{BIK2015, RS2021} (gradient). In fact, by combining energy arguments and entropy methods similar to \cite{CdCRS2023}, one can establish exponential-in-time rates of decay for $\bmu^\tau-\bmu_\be^\tau$ in Sobolev spaces with polynomial spatial weights (cf. \cite{GW2002, GW2005}). But this still leaves the question of log density bounds.

The main observation is that $\ws^\tau \coloneqq \log\frac{\bmu^\tau}{\bmu_\be^\tau}$ satisfies the nonlinear PDE
\begin{multline}\label{eq:dtws}
\p_\tau\ws^\tau = \frac1\beta\Delta\ws^\tau - \frac1\be|\nabla \ws^\tau|^2 + 2\nabla\ws^\tau\cdot (\nabla\g_\tau-\k_\tau)\ast\bmu^\tau -\div\k_\tau\ast(\bmu^\tau-\bmu_\be^\tau) \\
- \nabla\log\bmu_\be^\tau\cdot\k_\tau\ast(\bmu^\tau-\bmu_\be^\tau)-\div(\nabla\g_\tau+\k_\tau)\ast\bmu_\be^\tau -\nabla\log\bmu_\be^\tau \cdot (\nabla\g_\tau+\k_\tau)\ast\bmu_\be^\tau,
\end{multline}
which is amenable to direct maximum principle arguments. Note that sums involving $(\nabla\g_\tau+\k_\tau)\ast\bmu_\be^\tau$ vanish if $\M=-\I$, while, by radial symmetry, the inner product of $\k_\tau\ast\bmu_\be^\tau$ with the gradient of a radial function vanishes. In fact, if one is just interested in local-in-time estimates, then these can be established fairly easily through the energy method and Sobolev embedding. Note that for short times, $\bmu_\be^\tau$ is well approximated by $\bmu_\be^0$, while for large times, $\bmu_\be^\tau$ is well approximated by $\bmu_\be^\infty$. We report on these findings in the forthcoming \cite{HRS2024confII}.

\begin{remark}
In the Hamiltonian case, it is easy to see that solutions satisfy the desired regularity estimates if the initial data is radially symmetric. Indeed, by \cref{rem:rad}, the equation reduces to the Fokker-Planck equation with quadratic confinement; equivalently, $\frac{\bmu^\tau}{{\bmu_\be^\infty}}$ solves the Ornstein-Uhlenbeck equation. From the Mehler formula \cite[Section 2.7.1]{BGL2014}, one can deduce that if $\frac{\bmu^\circ}{{\bmu_\be^\infty}}\ge c>0$, then $\frac{\bmu^\tau}{{\bmu_\be^\infty}}\ge c$ for all $\tau\ge 0$. Moreover, $\|\nabla^{\otimes k}\frac{\bmu^\tau}{{\bmu_\be^\infty}}\|_{L^\infty} = O(e^{-k\tau})$ as $\tau\rightarrow\infty$. By the chain rule, this implies an exponential-in-time rate of decay for $\|\nabla^{\otimes 2}\log\frac{\bmu^\tau}{\bmu_\be^\infty}\|_{L^\infty}$.
\end{remark}

\begin{remark}[Added in proof]\label{rem:Monmarche}
During the finalization of this manuscript, Monmarch\'{e} \emph{et al.} uploaded to arXiv their preprint \cite{MRW2024}. They establish global bounds for the Hessian of the log of the Radon-Nikodym derivative of the mean-field solution relative to the Gaussian $\mu_\be^\infty$ for a sub-range of Hamiltonian and gradient Riesz flows with quadratic confinement in the whole space and exponential-in-time decay estimates for this quantity for the Hamiltonian case $\sf<\d-1$. Since $\g_\tau=\g$ for $\sf=0$, their work shows in this case that the required regularity estimates for our \cref{thm:main} are satisfied. However, for $\sf>0$, their results are not applicable to our setting, since they do not account for the renormalization of $\g$. A more detailed comparison between our approach to estimates for the self-similar mean-field equation and theirs for the usual mean-field equation with quadratic confinement is deferred to \cite{HRS2024confII}.
\end{remark}

\subsection{Organization of paper}\label{ssec:introorg}
Let us close out the introduction with some comments on the organization of the remaining body of the paper. In \cref{sec:SS}, we present the computations for the self-similar transformation. \cref{ssec:SSeqns} treats the mean-field and forward Kolmogrov equations, \cref{ssec:SSre} treats the relative entropy, and \cref{ssec:SSmfe} treats the modulated (free) energy. In \cref{sec:MAIN}, we give the proof of \cref{thm:main}. \cref{ssec:MAINre} treats the relative entropy portion of the theorem, with \cref{prop:REgron} the main result; and \cref{ssec:MAINmfe} treats the modulated free energy portion, with \cref{prop:MFEgron1,prop:MFEgron2} the main results.

\subsection{Acknowledgments}
The authors thank Jiaoyang Huang for discussion that inspired the results of this paper. The first author thanks Pierre Monmarch\'{e} and Songbo Wang for discussion about their recent work \cite{MRW2024}.

\section{Transformation to self-similar coordinates}\label{sec:SS}
In this section, we give the detailed computations for how equations \eqref{eq:MFlim} and \eqref{eq:FK}, the modulated energy \eqref{eq:ME}, relative entropy \eqref{eq:RE}, and modulated free energy \eqref{eq:MFE} transform under the change to self-similar coordinates.

\subsection{Mean-field and forward Kolmogorov equations}\label{ssec:SSeqns}
We first show that $\mu^t, f_N^t$ respectively solve the mean-field \eqref{eq:MFlim} and forward Kolmogorov \eqref{eq:FK} equations if and only if their self-similar transformations $\bmu^\tau, \bar{f}_N^\tau$ respectively solve their self-similar forms \eqref{eq:MFlimss} and \eqref{eq:FKss}.

\begin{lemma}\label{lem:sseqns}
Let $T>0$. $\mu$ solves equation \eqref{eq:MFlim} on $[0,T]$ if and only if $\bmu$ solves equation \eqref{eq:MFlimss} on $[0,\log(T+1)]$. $f_N$ solves equation \eqref{eq:FK} on $[0,T]$ if and only if $\bar{f}_N$ solves equation \eqref{eq:FKss} on $[0,\log(T+1)]$. 
\end{lemma}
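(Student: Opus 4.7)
The lemma is a direct change-of-variables verification, so my plan is to carry out the chain-rule computation on both sides of each equation and then collect terms. I will do it in detail for the mean-field equation; the forward Kolmogorov case follows the same template applied coordinate-by-coordinate.

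First I record the scaling identities implied by \eqref{eq:mfssvar}--\eqref{eq:bmudef}. Differentiating $\bmu(\tau,\xi)=e^{\d\tau/2}\mu(e^\tau-1,e^{\tau/2}\xi)$ using the chain rule gives
\begin{align*}
\p_\tau\bmu &= \tfrac{\d}{2}\bmu + e^{(\d+2)\tau/2}\p_t\mu + \tfrac12\,\xi\cdot\nabla_\xi\bmu,\\
\nabla_\xi\bmu &= e^{(\d+1)\tau/2}\nabla_x\mu,\qquad \Delta_\xi\bmu = e^{(\d+2)\tau/2}\Delta_x\mu,
\end{align*}
where on the right-hand sides $\mu$ and its derivatives are evaluated at $(t,x)=(e^\tau-1,e^{\tau/2}\xi)$. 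Next, because $\g$ is $(-\sf)$-homogeneous (up to the additive constant in the $\sf=0$ case) one has $\nabla\g(e^{\tau/2}z)=e^{-(\sf+1)\tau/2}\nabla\g(z)$, hence $\k(e^{\tau/2}z)=e^{-(\sf+1)\tau/2}\k(z)$. A change of variables $y=e^{\tau/2}\eta$ in the convolution, combined with $\k_\tau=e^{-\sf\tau/2}\k$, yields
\begin{equation*}
(\k\ast\mu)(t,x) = e^{-\tau/2}\,(\k_\tau\ast\bmu)(\tau,\xi),
\end{equation*}
and consequently $\div_x\bigl(\mu\,\k\ast\mu\bigr)(t,x) = e^{-(\d+2)\tau/2}\,\div_\xi\bigl(\bmu\,\k_\tau\ast\bmu\bigr)(\tau,\xi)$ by a further application of the chain rule.

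Plugging these identities into $\p_t\mu + \div(\mu\,\k\ast\mu) - \tfrac1\beta\Delta\mu = 0$ and multiplying through by $e^{(\d+2)\tau/2}$, the time derivative contributes
\begin{equation*}
\p_\tau\bmu - \tfrac{\d}{2}\bmu - \tfrac12\,\xi\cdot\nabla_\xi\bmu,
\end{equation*}
while the nonlinear and diffusion terms contribute $\div_\xi(\bmu\,\k_\tau\ast\bmu)$ and $\tfrac1\beta\Delta_\xi\bmu$ respectively. Rewriting $-\tfrac{\d}{2}\bmu - \tfrac12\,\xi\cdot\nabla_\xi\bmu = -\div_\xi\!\bigl(\bmu\,\nabla(\tfrac14|\xi|^2)\bigr)$ gives exactly \eqref{eq:MFlimss}. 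Every step is an invertible algebraic manipulation, so the reverse implication holds as well, and the initial-data condition is preserved because $\tau=0 \Leftrightarrow t=0$ with $\xi=x$ and $\bmu|_{\tau=0}=\mu|_{t=0}$.

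For the forward Kolmogorov equation I would apply exactly the same argument to $\bar f_N$ as defined in \eqref{eq:bfNdef}, using the $N$-particle analogs
\begin{equation*}
\p_\tau\bar f_N = \tfrac{\d N}{2}\bar f_N + e^{(\d N+2)\tau/2}\p_t f_N + \tfrac12\,\Xi_N\cdot\nabla_{\Xi_N}\bar f_N,\qquad \nabla_{\xi_i}\bar f_N = e^{(\d N+1)\tau/2}\nabla_{x_i}f_N,
\end{equation*}
together with the scaling $\k(x_i-x_j)=e^{-(\sf+1)\tau/2}\k_\tau(\xi_i-\xi_j)\cdot e^{\sf\tau/2}=e^{-\tau/2}\k_\tau(\xi_i-\xi_j)$ applied pairwise inside the sum. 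The same rewriting of the $\Xi_N$-dilation term as $-\div_{\Xi_N}(\bar f_N\,\nabla_{\Xi_N}(\tfrac14|\Xi_N|^2))$ produces \eqref{eq:FKss}, and again reversibility is manifest. The main (very mild) obstacle is bookkeeping the exponential factors so that every term in each equation multiplies to the same power of $e^{\tau/2}$ before one clears denominators; beyond this the proof is routine.
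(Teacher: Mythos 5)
Your proposal is correct and is essentially the same argument as the paper's proof: a chain-rule computation using $\nabla_\xi\bmu = e^{(\d+1)\tau/2}\nabla_x\mu$, the homogeneity of $\nabla\g$ (equivalently $\k(e^{\tau/2}z)=e^{-(1+\sf)\tau/2}\k(z)$) together with the change of variables in the convolution, and the rewriting of $\tfrac{\d}{2}\bmu+\tfrac12\xi\cdot\nabla_\xi\bmu$ as $\div_\xi(\bmu\,\nabla(\tfrac14|\xi|^2))$, with the same template repeated for the $N$-particle density. The only cosmetic difference is that you transform each term of the original equation and clear the common exponential factor, while the paper substitutes the equation into the chain-rule expansion of $\p_\tau\bmu$; both treat the reverse implication as immediate from invertibility of the transformation, as does the paper.
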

\begin{proof}
We only present the ``only if'' direction, leaving the ``if'' as an exercise for the reader. Recalling \eqref{eq:bmudef} and \eqref{eq:MFlim}, the chain rule implies
\begin{align}
\p_\tau\bmu(\tau,\xi) &= \frac{\d}{2}e^{\d\tau/2}\mu(e^\tau - 1, e^{\tau/2}\xi)  + e^{\d\tau/2 + \tau}(\p_t\mu)(e^\tau - 1, e^{\tau/2}\xi) + e^{\d\tau/2}(\nabla_x\mu)(e^\tau - 1, e^{\tau/2}\xi) \cdot \frac12 e^{\tau/2}\xi \nn\\
&= \frac{\d}{2}\bmu(\tau,\xi) + e^{\d\tau/2 + \tau}\Big(\frac1\be\Delta_x\mu -\div_x(\mu\k\ast\mu)\Big)(e^{\tau}-1,e^{\tau/2}\xi) + \frac12\xi\cdot\nabla_{\xi}\bmu(\tau,\xi) \nn\\
&=\frac{\d}{2}\bmu(\tau,\xi) + \frac1\be\D_{\xi}\bmu(\tau,\xi)-\div_{\xi}(\bmu\k_\tau\ast\bmu)(\tau,\xi) + \frac12\xi\cdot\nabla_{\xi}\bmu(\tau,\xi).
\end{align}
Above, we have implicitly used the identities $\nabla_{\xi}\Big(\mu(e^{\tau/2}\xi)\Big) = e^{\tau/2}(\nabla_x \mu)(e^{\tau/2}\xi)$ and $(\nabla_x\g\ast\mu)(e^{\tau/2}\xi) = e^{-(1+\sf-\d)\tau/2}\Big(\nabla_{\xi}\g\ast \mu(e^{\tau/2}\cdot)\Big)(\xi)$. Rewriting the sum of the first and last terms as $\div_{\xi}(\frac12\xi\bmu)(\tau,\xi)$ completes the proof of the first assertion.

Similarly, recalling \eqref{eq:bfNdef} and \eqref{eq:FK}, the chain rule implies
\begin{align}
\p_\tau\bar{f}_N(\tau,\Xi_N)  &= \frac{\d N}{2}e^{\d N\tau/2}f_N(e^{\tau}-1,e^{\tau/2}\Xi_N) + e^{\d N\tau/2+\tau}(\p_t f_N)(e^\tau-1,e^{\tau/2}\Xi_N) \nn\\
&\ph +  e^{\d N\tau/2}(\nabla_{\XN}f_N)(e^\tau-1,e^{\tau/2}\Xi_N)\cdot \frac12e^{\tau/2}\Xi_N  \nn\\
&=\frac{\d N}{2}\bar{f}_N(\tau,\Xi_N) + e^{\d N\tau/2+\tau}\sum_{i=1}^N \div_{x_i}\Big(f_N\frac1N\sum_{j\ne i}\k(x_i-x_j)\Big)(e^\tau -1, e^{\tau/2}\Xi_N) \nn\\
&\ph + \nab_{\Xi_N}\bar{f}_N(\tau,\Xi_N)\cdot\frac12\Xi_N \nn\\
&=\frac{\d N}{2}\bar{f}_N(\tau,\Xi_N) + \sum_{i=1}^N \div_{\xi_i}\Big(\bar{f}_N \sum_{j\ne i}\k_\tau(\xi_i-\xi_j)\Big)(\tau,\Xi_N) +  \nab_{\Xi_N}\bar{f}_N(\tau,\Xi_N)\cdot\frac12\Xi_N .
\end{align}
Above, we have implicitly used the identity $\k(e^{\tau/2}\xi_i - e^{\tau/2}\xi_j) = e^{-(1+\sf)\tau/2}\k(\xi_i-\xi_j)$. Combining the first and last terms of the final line by product rule completes the proof of the second assertion.
\end{proof}

\subsection{Relative entropy}\label{ssec:SSre}
As claimed in the introduction, the relative entropy is invariant under transformation to self-similar coordinates.

\begin{lemma}\label{lem:REss}
Let $f_N, {g_N}\in \P((\R^\d)^N)$. For $\tau\ge 0$, define $\bar{f}_N(\Xi_N) \coloneqq e^{\d N\tau/2}f_N(e^{\tau/2}\Xi_N)$ and {$\bar{g}_N(\Xi_N) \coloneqq e^{\d N\tau/2}g_N(e^{\tau/2}\Xi_N)$}. Then
\begin{align}\label{eq:REss}
H_N(f_N \vert g_N) = H_N(\bar{f}_N \vert \bar{g}_N).
\end{align}
\end{lemma}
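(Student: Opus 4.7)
The proof is a direct change of variables, so the plan is simply to unwind definitions and invoke the scaling behavior of the densities. First I would observe that the map $\Xi_N \mapsto e^{\tau/2}\Xi_N$ has Jacobian $e^{\d N\tau/2}$, which is precisely the prefactor appearing in the definitions of $\bar{f}_N$ and $\bar{g}_N$. Consequently the ratio is scale-invariant in the sense that
\begin{equation*}
\frac{\bar{f}_N(\Xi_N)}{\bar{g}_N(\Xi_N)} = \frac{e^{\d N\tau/2}f_N(e^{\tau/2}\Xi_N)}{e^{\d N\tau/2}g_N(e^{\tau/2}\Xi_N)} = \frac{f_N(e^{\tau/2}\Xi_N)}{g_N(e^{\tau/2}\Xi_N)},
\end{equation*}
so the integrand $\log(\bar{f}_N/\bar{g}_N)(\Xi_N)$ is just the pullback of $\log(f_N/g_N)$ under the scaling.

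Next I would substitute this identity into the definition \eqref{eq:RE} of $H_N(\bar{f}_N \vert \bar{g}_N)$ and perform the change of variables $X_N = e^{\tau/2}\Xi_N$, with $dX_N = e^{\d N\tau/2}d\Xi_N$. This yields
\begin{equation*}
H_N(\bar{f}_N\vert \bar{g}_N) = \frac{1}{N}\int_{(\R^\d)^N}\log\frac{f_N(e^{\tau/2}\Xi_N)}{g_N(e^{\tau/2}\Xi_N)}\,e^{\d N\tau/2}f_N(e^{\tau/2}\Xi_N)\,d\Xi_N = \frac{1}{N}\int_{(\R^\d)^N}\log\frac{f_N(X_N)}{g_N(X_N)}\,f_N(X_N)\,dX_N,
\end{equation*}
which is exactly $H_N(f_N\vert g_N)$. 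The two factors of $e^{\d N\tau/2}$ (one from the density $\bar{f}_N$ and one from the Jacobian of the inverse substitution $d\Xi_N = e^{-\d N\tau/2}dX_N$) cancel, which is the structural reason behind the invariance.

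There is essentially no obstacle here beyond ensuring integrability of $\log(f_N/g_N)$ against $f_N$ so that the substitution is justified; this is automatic from the assumption that $H_N(f_N\vert g_N)$ is defined (and if it is not, both sides equal $+\infty$ by the same computation). The lemma is really a restatement of the general fact that relative entropy is invariant under pushforward by any common diffeomorphism, specialized to the dilation $\Xi_N \mapsto e^{\tau/2}\Xi_N$.
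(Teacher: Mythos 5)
Your proof is correct and is essentially the same argument as the paper's: a direct change of variables in which the $e^{\d N\tau/2}$ prefactor cancels inside the logarithm and against the Jacobian of the dilation (the paper simply runs the substitution starting from $H_N(f_N\vert g_N)$ rather than from $H_N(\bar f_N\vert \bar g_N)$, which is immaterial).
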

\begin{proof}
For $\XN\in(\R^\d)^N$, writing $f_N(\XN) = e^{-\d N\tau/2}\bar{f}_N(e^{-\tau/2}\XN)$ and $g_N(\XN) = e^{-\d N\tau/2}\bar{g}_N(e^{-\tau/2}\XN)$, we compute
\begin{align}
H_N(f_N \vert g_N) &= \frac{1}{N}\int_{(\R^\d)^N} e^{-\d N\tau/2}\bar{f}_N(e^{-\tau/2}\XN)\log\left(\frac{\bar{f}_N(e^{-\tau/2}\XN)}{\bar{g}_N(e^{-\tau/2}\XN)}\right)d\XN\nn\\
&=\frac{1}{N}\int_{(\R^\d)^N}\bar{f}_N(\Xi_N)\log\left(\frac{\bar{f}_N(\Xi_N)}{\bar{g}_N(\Xi_N)}\right)d\Xi_N\nn\\
&= H_N(\bar{f}_N \vert \bar{g}_N), 
\end{align}
where the penultimate line follows from the change of variable $\Xi_N = e^{-\tau/2}\XN$.
\end{proof}

\subsection{Modulated (free) energy}\label{ssec:SSmfe}
We compute the transformation of the modulated free energy under the change to similarity variables. In contrast to the relative entropy, the modulated energy is only invariant if $\sf=0$.

\begin{lemma}\label{lem:MFEss}
Let $\mu \in \P(\R^\d)$ and $f_N \in \P((\R^\d)^N)$. For $\tau\ge 0$, define $\bmu(\xi) \coloneqq e^{\d\tau/2}\mu(e^{\tau/2}\xi)$ and $\bar{f}_N(\Xi_N) \coloneqq e^{\d N\tau/2}f_N(e^{\tau/2}\Xi_N)$. Then
\begin{align}\label{eq:MFEss1}
\E_{f_N}\Big[\Fr_N(\XN,\mu)\Big] = \E_{\bar{f}_N}\Big[\Fr_N^\tau(\Xi_N,\bmu)\Big] + \frac{\tau}{4N}\indic_{\sf=0}
\end{align}
and
\begin{align}\label{eq:MFEss2}
E_N(f_N, \mu) =  \frac1\be H_N(\bar{f}_N \vert \bmu^{\otimes N}) +  \E_{\bar{f}_N}\Big[\bar{\Fr}_N^\tau(\Xi_N,\bmu)\Big] + \frac{\tau}{4N}\indic_{\sf=0} \eqqcolon \bar{E}_N^\tau(\bar{f}_N, \bmu) + \frac{\tau}{4N}\indic_{\sf=0}.
\end{align}
Consequently, if
\begin{align}
\Ec_N(f_N,\mu) &\coloneqq E_N(f_N, \mu) + \mathsf{C}_1\frac{\log(N\|\mu\|_{L^\infty})}{N}\indic_{\sf=0} + \mathsf{C}_2\|\mu\|_{L^\infty}^{\ga} N^{\la},
\end{align}
for real constants $\mathsf{C}_1,\mathsf{C}_2,\ga,\la>0$, then
\begin{multline}\label{eq:MFEss3}
\Ec_N(f_N,\mu) = \frac1\be H_N(\bar{f}_N \vert \bmu^{\otimes N}) +\E_{\bar{f}_N}\Big[ \bFr^\tau(\Xi_N,\bmu)\Big] + \mathsf{C}_2e^{-\frac{\d\ga\tau}{2}}\|\bmu\|_{L^\infty}^{\ga}N^{\la}\\
+\Big(\mathsf{C}_1\frac{\log(N\|\bmu\|_{L^\infty})}{ N}  + \frac{\tau}{N}\Big(\frac14-\frac{\d\mathsf{C}_1}{2}\Big)\Big)\indic_{\sf=0}  \eqqcolon \bEc^\tau(\bar{f}_N,\bmu) + \frac{\tau}{N}\Big(\frac14-\frac{\d\mathsf{C}_1}{2}\Big)\indic_{\sf=0}.
\end{multline}
\end{lemma}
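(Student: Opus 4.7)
The plan is to prove the three displays in turn, with the bulk of the work going into the pointwise identity for the modulated energy $F_N(X_N,\mu)$ under the scaling $X_N = e^{\tau/2}\Xi_N$, $\mu \mapsto \bar\mu$. Taking expectations and combining with \cref{lem:REss} will then give \eqref{eq:MFEss2}, and adding the explicit correction terms defining $\Ec_N$ will give \eqref{eq:MFEss3}.

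For \eqref{eq:MFEss1}, I first observe that under the dilation $\xi \mapsto e^{\tau/2}\xi$, the signed measure $\frac1N\sum_i\delta_{x_i}-\mu$ (in the variable $x$) pulls back exactly to $\frac1N\sum_i\delta_{\xi_i}-\bar\mu$ (in the variable $\xi$), by the definition $\bar\mu(\xi)=e^{\d\tau/2}\mu(e^{\tau/2}\xi)$ and because diagonal sets are preserved by the dilation. Thus
\begin{align*}
F_N(e^{\tau/2}\Xi_N,\mu) = \tfrac12\int_{(\R^\d)^2\setminus\triangle}\g(e^{\tau/2}(\xi-\eta))\,d\Big(\tfrac1N\textstyle\sum_i\delta_{\xi_i}-\bar\mu\Big)^{\otimes 2}(\xi,\eta).
\end{align*}
When $\sf>0$, homogeneity gives $\g(e^{\tau/2}z)=e^{-\sf\tau/2}\g(z)=\g_\tau(z)$, so the right-hand side equals $e^{-\sf\tau/2}F_N(\Xi_N,\bar\mu)=\bar F_N^\tau(\Xi_N,\bar\mu)$. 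When $\sf=0$, $\g(e^{\tau/2}z)=\g(z)-\tau/2$, and the constant $-\tau/2$ integrates against $\int_{\triangle^c}d(\frac1N\sum\delta_{\xi_i}-\bar\mu)^{\otimes 2}$. The only atom on the diagonal comes from the discrete-discrete piece, so this total mass equals $0-\frac1{N^2}\sum_i 1=-\frac1N$, yielding the shift $F_N(e^{\tau/2}\Xi_N,\mu)=F_N(\Xi_N,\bar\mu)+\frac{\tau}{4N}=\bar F_N^\tau(\Xi_N,\bar\mu)+\frac{\tau}{4N}$ (using $\g_\tau=\g$ here). Both cases are summarized by the pointwise identity $F_N(e^{\tau/2}\Xi_N,\mu)=\bar F_N^\tau(\Xi_N,\bar\mu)+\frac{\tau}{4N}\indic_{\sf=0}$, and averaging against $f_N(X_N)\,dX_N=\bar f_N(\Xi_N)\,d\Xi_N$ (the Jacobian $e^{\d N\tau/2}$ cancels the density rescaling in the definition of $\bar f_N$) gives \eqref{eq:MFEss1}.

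For \eqref{eq:MFEss2}, I combine \eqref{eq:MFEss1} with \cref{lem:REss} applied to $g_N=\mu^{\otimes N}$, whose self-similar transform is $\bar\mu^{\otimes N}$, to read off the displayed decomposition of $E_N(f_N,\mu)$. For \eqref{eq:MFEss3}, I substitute the scaling $\|\mu\|_{L^\infty}=e^{-\d\tau/2}\|\bar\mu\|_{L^\infty}$, which yields $\mathsf{C}_1\frac{\log(N\|\mu\|_{L^\infty})}{N}\indic_{\sf=0}=\mathsf{C}_1\frac{\log(N\|\bar\mu\|_{L^\infty})}{N}\indic_{\sf=0}-\frac{\mathsf{C}_1\d\tau}{2N}\indic_{\sf=0}$ and $\mathsf{C}_2\|\mu\|_{L^\infty}^{\ga}N^{\la}=\mathsf{C}_2 e^{-\d\ga\tau/2}\|\bar\mu\|_{L^\infty}^{\ga}N^{\la}$. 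Adding these to \eqref{eq:MFEss2} and collecting the coefficients of $\tau/N$ in the $\sf=0$ contribution gives the stated identity with the factor $\frac14-\frac{\d\mathsf{C}_1}{2}$.

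The only step that requires any care is the $\sf=0$ diagonal bookkeeping, since one must use that the singular discrete-diagonal mass of $(\frac1N\sum\delta_{\xi_i}-\bar\mu)^{\otimes 2}$ is $\frac1N$ (the off-diagonal mass vanishes identically because $\bar\mu$ is a probability density, so the signed measure has zero total mass). Everything else is a direct change of variables and substitution, so I do not expect any serious obstacle.
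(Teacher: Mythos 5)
Your proposal is correct and follows essentially the same route as the paper: a change of variables exploiting the homogeneity of $\g$ for $\sf>0$ and the additive $\log$ shift for $\sf=0$ (your zero-total-mass/diagonal-mass bookkeeping is just a repackaging of the paper's term-by-term count, both yielding the shift $\frac{\tau}{4N}$), combined with \cref{lem:REss} and the scaling $\|\mu\|_{L^\infty}=e^{-\d\tau/2}\|\bmu\|_{L^\infty}$. One small slip: in your closing parenthetical it is the \emph{total} mass of $(\frac1N\sum_i\delta_{\xi_i}-\bmu)^{\otimes 2}$ that vanishes (so the off-diagonal mass equals $-\frac1N$, as you correctly computed earlier), not the off-diagonal mass.
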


\begin{remark}
If $\mathsf{C}_1 = \frac{1}{2\d}$, $\ga=\frac{\sf}{\d}$, and $\la = {\frac{\sf}{\d}-1}$, which, by \cref{lem:MEpos} below, we are allowed to take if $\d-2\le \sf<\d$ to get a nonnegative quantity, then $\Ec_N(f_N,\mu) = \bEc^\tau(\bar{f}_N,\bmu)$.
\end{remark}

\begin{proof}[Proof of \cref{lem:MFEss}]
We already computed the invariance of the relative entropy with \cref{lem:REss}, so it only remains to consider the modulated energy. Using the homogeneity of $\g$ if $\sf\ne 0$ or the product-to-sum property of $\g$ if $\sf=0$, we compute
\begin{align}
\Fr_N(X_N, \mu) &= \frac{1}{2N^2}\sum_{1\leq i\neq j\leq N}\g(x_i-x_j) -\frac{1}{N}\sum_{i=1}^N \int_{\R^\d}\g(x_i-y)\mu(y)dy \nn\\
&\ph + \frac12\int_{(\R^\d)^2}\g(x-y)\mu(x)\mu(y)dxdy \nn\\
&= e^{-\sf\tau/2}\Bigg(\frac{1}{2N^2}\sum_{1\leq i\neq j\leq N}\g(e^{-\tau/2}({x_i-x_j})) -\frac{1}{N}\sum_{i=1}^N \int_{\R^\d}\g(e^{-\tau/2}({x_i-y}))e^{-\d\tau/2}\bmu(e^{-\tau/2}{y})dy \nn\\
&\ph + \frac12\int_{(\R^\d)^2}\g(e^{-\tau/2}({x-y}))e^{-\d\tau}\bmu(e^{-\tau/2}{x})\bmu(e^{-\tau/2}{y})dydx\Bigg)  +\frac{1}{2N}\log(e^{\tau/2})\indic_{\sf=0}.
\end{align}
Therefore, making the changes of variable  $\xi_i = e^{-\tau/2}{x_i}$, $\xi= e^{-\tau/2}{x}$, $\eta= e^{-\tau/2}{y}$, we find that
\begin{multline}
\Fr_N(\XN,\mu)  =  e^{-\sf\tau/2}\Bigg(\frac{1}{2N^2}\sum_{1\le i\ne j\le N}\g(\xi_i-\xi_j) - \frac1N\sum_{i=1}^N \int_{\R^\d}\g(\xi_i-\eta)d\bmu(\eta)\\
+\frac12\int_{(\R^\d)^2}\g(\xi-\eta)d\bmu^{\otimes 2}(\xi,\eta)\Bigg) + \frac{\tau}{4N}\indic_{\sf=0} = e^{-\sf\tau/2}\Fr_N(\Xi_N,\bmu) + \frac{\tau}{4N}\indic_{\sf=0} = \bar{F}_N^\tau(\Xi_N,\bmu) + \frac{\tau}{4N}\indic_{\sf=0} ,
\end{multline}
where we use the notation $\bFr^\tau(\Xi_N, \bmu)$ from \eqref{eq:bFNtaudef}. Taking the expectation of the left-most side with respect to $f_N(\XN)$ and of the right-most side with respect to $\bar{f}_N(\Xi_N)$ yields \eqref{eq:MFEss1}. Combining \eqref{eq:MFEss1} with \eqref{eq:REss} yields \eqref{eq:MFEss2}.
%and defining
%\begin{align}
%\coloneqq \frac12\int_{(\R^\d)^2\setminus\triangle}\g_\tau(\xi-\eta)d\Big(\frac1N\sum_{i=1}^N \delta_{\xi_i}-\bmu\Big)^{\otimes 2}(\xi,%\eta),
%\end{align}
%we may write
%\begin{align}
%E_N(f_N^t, \mu^t) = \frac1\be H_N(\bar{f}_N^\tau \vert (\bmu^\tau)^{\otimes N}) + e^{-\frac{\sf\tau}{2}}\E_{\bar{f}_N^\tau}\Big[ \bFr^\tau(\Xi_N,\bmu^\tau)\Big]+ \frac{\tau}{4N}\indic_{\sf=0} \eqqcolon \bar{E}_N^\tau(\bar{f}_N^\tau, \bmu^\tau).
%\end{align}

Next, observe that {since $\|\mu\|_{L^\infty} = e^{-\d\tau/2}\|\bmu\|_{L^\infty}$,}
%\begin{align}
%\Ec_N(f_N,\mu) &= E_N(f_N^t, \mu^t) + \mathsf{C}_1\frac{\log(N\|\mu^t\|_{L^\infty})}{N}\indic_{\sf=0} + \mathsf{C}_2\|\mu^t\|_{L^\infty}%^{\frac{\sf}{\d}} N^{\frac{\sf}{\d}-1},
%\end{align}
%which is $\ge 0$ provided $\mathsf{C}$ is sufficiently large, depending only on $\d,\sf$, then 
\begin{align}\label{eq:EcbEc}
\Ec_N(f_N,\mu)&=\frac1\be H_N(\bar{f}_N \vert \bmu^{\otimes N}) + \E_{\bar{f}_N}\Big[ \bFr^\tau(\Xi_N,\bmu)\Big]+ \frac{\tau}{4N}\indic_{\sf=0} \nn\\
&\ph+ \mathsf{C}_1\frac{\log(N e^{-\d\tau/2}\|\bmu\|_{L^\infty})}{N}\indic_{\sf =0} + \mathsf{C}_2 e^{-\ga\d\tau/2} \|\bmu\|_{L^\infty}^{\ga} N^{\la} \nn\\
&= \frac1\be H_N(\bar{f}_N \vert \bmu^{\otimes N}) +\E_{\bar{f}_N}\Big[ \bFr^\tau(\Xi_N,\bmu)\Big]+\Big(\mathsf{C}_1\frac{\log(N\|\bmu\|_{L^\infty})}{ N}  + \frac{\tau}{N}\Big(\frac14-\frac{\d\mathsf{C}_1}{2}\Big)\Big)\indic_{\sf=0} \nn\\
&\ph + \mathsf{C}_2e^{-\ga\d\tau/2}\|\bmu^\tau\|_{L^\infty}^{\ga}N^{\la}.
\end{align}
This gives \eqref{eq:MFEss3}.
\end{proof}

\section{Main proof}\label{sec:MAIN}
In this section, we prove our main result \cref{thm:main}. We divide into two subsections. \cref{ssec:MAINre} covers the relative entropy portion of the theorem, while \cref{ssec:MAINmfe} covers the modulated free energy portion.

\subsection{Relative entropy}\label{ssec:MAINre}
We recall a functional inequality, originally due to \cite{JW2018}, for estimating commutator terms by the relative entropy in the $\sf=0$ case. See also \cite[Section 5]{LLN2020} for a simpler proof.

\begin{lemma}\label{lem:REcomm}
Let $\k: (\R^\d)^2\setminus\triangle \rightarrow\R^\d$ be $C^1$, such that there exists $C_{\k}>0$ for which
\begin{align}
\left|\k(x,y)\right| \leq \frac{C_{\k}}{|x-y|}.
\end{align}
There exists a constant $C_*>0$, depending only on $\d$, such that for any probability measures $\mathbb{P}_N \in \P((\R^\d)^N)$, $\nu\in\P(\R^\d)$, and Lipschitz vector field $v:\R^\d\rightarrow\R^\d$, 
\begin{multline}
\E_{\mathbb{P}_N}\left[\int_{(\R^\d)^2\setminus\triangle} (v(x)-v(y))\cdot\k(x,y) d\Big(\frac1N\sum_{i=1}^N \delta_{x_i}-\nu \Big)^{\otimes 2}(x,y) \right]\\
 \leq {C_*} C_{\k}\| \nabla v\|_{L^\infty}\Big(\frac1N +  H_N(\mathbb{P}_N \vert \nu^{\otimes N})\Big).
\end{multline}
\end{lemma}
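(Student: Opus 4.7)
The plan is to reduce the commutator estimate to a uniform-in-$N$ exponential-moment bound under $\nu^{\otimes N}$ via the Gibbs variational principle, and then to prove that moment bound by Taylor expansion combined with the cancellations built into the kernel. Set $\phi(x,y) := (v(x)-v(y))\cdot\k(x,y)$. The hypotheses on $\k$ and $v$ yield the crucial pointwise bound
\[
|\phi(x,y)| \;\le\; \|\nabla v\|_{L^\infty}|x-y|\cdot\frac{C_\k}{|x-y|} \;=\; C_\k\|\nabla v\|_{L^\infty}, \qquad x\neq y,
\]
so the commutator form trades the Riesz-type singularity of $\k$ for the Lipschitz gain of $v$. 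Writing $\omega_N := \frac1N\sum_i \delta_{x_i}-\nu$ and $\Phi(X_N) := \int_{(\R^\d)^2\setminus\triangle}\phi\,d\omega_N^{\otimes 2}$, the Donsker-Varadhan duality gives, for every $\eta>0$,
\[
\E_{\mathbb{P}_N}[\Phi] \;\le\; \frac1\eta H_N(\mathbb{P}_N\vert \nu^{\otimes N}) + \frac{1}{\eta N}\log \E_{\nu^{\otimes N}}\big[e^{\eta N \Phi}\big].
\]
Choosing $\eta = 1/(C_* C_\k\|\nabla v\|_{L^\infty})$ for a suitably large absolute constant $C_*$ makes the first term on the right equal to $C_* C_\k\|\nabla v\|_{L^\infty}\, H_N(\mathbb{P}_N\vert\nu^{\otimes N})$; the conclusion then reduces to showing the exponential moment bound $\sup_N\log \E_{\nu^{\otimes N}}[e^{\eta N\Phi}]\le 1$, which contributes the desired $C_* C_\k\|\nabla v\|_{L^\infty}/N$ term.

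The bulk of the work is establishing this exponential integrability, which I would do by Taylor expansion:
\[
\E_{\nu^{\otimes N}}\big[\Phi^k\big] \;=\; \int_{((\R^\d)^2)^k}\prod_{\ell=1}^k \phi(u_\ell,w_\ell)\;\E_{\nu^{\otimes N}}\Big[\bigotimes_{\ell=1}^k d\omega_N(u_\ell)\,d\omega_N(w_\ell)\Big].
\]
Expanding each $d\omega_N = \frac1N\sum_i \delta_{X_i} - d\nu$ and taking the expectation under $\nu^{\otimes N}$ produces a sum indexed by the way the $2k$ slots are distributed among particle indices and pure-$\nu$ slots. The structural observation is that $\omega_N$ has mean zero under $\nu^{\otimes N}$, so any assignment in which a single particle index appears in exactly one slot vanishes upon integration. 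The surviving terms are those in which every active empirical index is hit at least twice; combined with the $1/N^{2k}$ prefactor and a count of the number of such configurations, this yields a moment bound of Wick-type form
\[
|\E_{\nu^{\otimes N}}[\Phi^k]| \;\le\; \frac{k!\,(C_1 C_\k\|\nabla v\|_{L^\infty})^{k}}{N^k},
\]
where the uniform bound $\|\phi\|_{L^\infty}\le C_\k\|\nabla v\|_{L^\infty}$ is used on each factor and $C_1$ depends only on $\d$. Summing the Taylor series with $\eta N$ then gives the convergent geometric bound $\E_{\nu^{\otimes N}}[e^{\eta N\Phi}] \le 2$ as soon as $\eta\, C_1 C_\k\|\nabla v\|_{L^\infty}$ is a sufficiently small absolute constant, which closes the argument.

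The main obstacle is the combinatorial moment bound in the preceding display: one must simultaneously exploit (i) the pointwise bound on $\phi$ coming from the commutator structure, (ii) the mean-zero property of $\omega_N$ under $\nu^{\otimes N}$ to eliminate singly-visited indices, and (iii) a sharp count of the admissible slot-to-index assignments, in order to recover the factorial growth $k!$ rather than $(2k)!$ and a clean $N^{-k}$ decay. This is the technical heart of \cite{JW2018}; a cleaner implementation via a duality/rearrangement argument is given in \cite{LLN2020} and would be my preferred route for the details. Once this moment bound is in hand, substituting the exponential estimate back into the variational inequality above and optimizing the choice of $\eta$ yields the stated inequality with a constant $C_*$ depending only on $\d$.
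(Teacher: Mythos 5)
Your proposal is essentially the Jabin--Wang argument that the paper itself relies on for this lemma (the paper states it without proof, citing \cite{JW2018}, with \cite{LLN2020} for a simpler treatment): the pointwise bound $|(v(x)-v(y))\cdot\k(x,y)|\le C_{\k}\|\nabla v\|_{L^\infty}$ trading the singularity of $\k$ for the Lipschitz gain of $v$, the Donsker--Varadhan change of measure with $\eta\sim (C_{\k}\|\nabla v\|_{L^\infty})^{-1}$, and the uniform-in-$N$ exponential-moment bound for the doubly-centered bounded kernel are exactly the ingredients of that proof. The combinatorial moment estimate you defer to \cite{JW2018,LLN2020} is indeed the technical heart, and your sketch of it (centering of the kernel in each variable so that singly-visited indices vanish, yielding moments of size $k!\,C^k N^{-k}$ and hence a convergent geometric series after the choice of $\eta$) identifies the correct mechanism, so the proposal is sound and follows the same route as the cited source.
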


For a solution $\bar{f}_N^\tau$ of the self-similar forward Kolmogorov equation \eqref{eq:FKss} and a solution $\bmu^\tau$ of the self-similar mean-field equation \eqref{eq:MFlimss}, we compute the evolution of the normalized relative entropy $H_N(\bar{f}_N^\tau \vert (\bmu^\tau)^{\otimes N})$ introduced in \eqref{eq:RE}.

\begin{lemma}\label{lem:REdiss}
Let $\bar{f}_N$ and $\bmu$ be solutions of equations \eqref{eq:FKss} and \eqref{eq:MFlimss}, respectively, on $[0,\tau_0]$. Then for $\tau \in [0,\tau_0]$,
\begin{multline}\label{eq:dtHNfin}
\frac{d}{d\tau}H_N(\bar{f}_N^{\tau} \vert (\bmu^{\tau})^{\otimes N}) = -\frac{1}{\beta}I_N(\bar{f}_N^{\tau} \vert (\bmu^{\tau})^{\otimes N}) \\
+\E_{\bar{f}_N^\tau}\Big[\frac1N\sum_{i=1}^N \PV \ \k_\tau\ast \Big(\frac1N\sum_{j=1}^N \delta_{\xi_j} - \bmu^\tau\Big)(\xi_i) \cdot \nabla_{\xi_i}\log\left(\frac{\bar{f}_N^{\tau}}{(\bmu^{\tau})^{\otimes N}}\right)\Big]\\
% + \E_{\bar{f}_N^\tau}\Big[\int_{(\R^\d)^2\setminus\triangle} \div\k_\tau(\xi-\eta)d\Big(\frac1N\sum_{j=1}^N \delta_{\xi_j}-\bmu^\tau\Big)^{\otimes 2}(\xi,\eta)\Big]\\
%+ 2\E_{\bar{f}_N^\tau}\Big[\sum_{i=1}^N \int_{(\R^\d)^N} \k_\tau\ast \Big(\frac1N\sum_{j=1}^N \delta_{\xi_j} - \bmu^\tau\Big)(\xi_i) \cdot \nabla_{\xi_i}\log\left(\frac{\bar{f}_N^{\tau}}{(\bmu^{\tau})^{\otimes N}}\right)\Big]\\
%+\frac12\E_{\bar{f}_N^\tau}\Big[\int_{(\R^\d)^2\setminus\triangle}\Big(\nabla\log(\bmu^\tau)(\xi)-\nabla\log(\bmu^\tau)(\eta)\Big)\cdot\k_\tau(\xi-\eta)\Big(\frac1N\sum_{j=1}^N\delta_{\xi_j} - \bmu^{\tau}\Big)^{\otimes 2}(\xi,\eta)\Big].
\end{multline}
where
\begin{multline}\label{eq:dtHNfin'}
\frac1N\E_{\bar{f}_N^\tau}\Big[\sum_{i=1}^N  \PV \ \k_\tau\ast \Big(\frac1N\sum_{j=1}^N \delta_{\xi_j} - \bmu^\tau\Big)(\xi_i) \cdot \nabla_{\xi_i}\log\left(\frac{\bar{f}_N^{\tau}}{(\bmu^{\tau})^{\otimes N}}\right)\Big] \\
=-\frac12\E_{\bar{f}_N^\tau}\Big[\int_{(\R^\d)^2\setminus\triangle}\Big(\nabla\log(\bmu^\tau)(\xi)-\nabla\log(\bmu^\tau)(\eta)\Big)\cdot\k_\tau(\xi-\eta)\Big(\frac1N\sum_{j=1}^N\delta_{\xi_j} - \bmu^{\tau}\Big)^{\otimes 2}(\xi,\eta)\Big]\\
  - \E_{\bar{f}_N^\tau}\Big[\frac1N\sum_{i=1}^N\int_{(\R^\d)^2\setminus\triangle}  \div\k_\tau(\xi-\eta)\Big(\frac1N\sum_{j=1}^N\delta_{\xi_j} - \bmu^{\tau}\Big)^{\otimes 2}(\xi,\eta)\Big].
\end{multline}
%where
%\begin{align}\label{eq:Asdef}
%\As^\tau(\XN)\coloneqq \frac12\int_{(\R^\d)^2\setminus\triangle}\left(\nabla\log\frac{\bmu^{\tau}}{\mu_\beta}(\xi) - \nabla\log\frac{\bmu^{\tau}}{\mu_\beta}(\bar{y})\right)\cdot \k(\bar{x}-\bar{y})d\left(\frac1N\sum_{j=1}^N \delta_{\bar{x}_j}-\bmu^{\tau} \right)^{\otimes 2}(\bar{x},\bar{y}).
%\end{align}
\end{lemma}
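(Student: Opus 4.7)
The plan is first to derive \eqref{eq:dtHNfin} from the general relative entropy dissipation identity for two Fokker--Planck flows with common diffusion, then to obtain \eqref{eq:dtHNfin'} via integration by parts and symmetrization. Throughout I write $\mu_N \coloneqq \frac{1}{N}\sum_{j=1}^N \delta_{\xi_j}$ for brevity.

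For \eqref{eq:dtHNfin}, I would begin by observing that both $\bar{f}_N^\tau$ and $(\bmu^\tau)^{\otimes N}$ satisfy Fokker--Planck equations on $(\R^\d)^N$ with the same diffusion $\frac{1}{\beta}\Delta_{\Xi_N}$; denote their drifts by $V$ and $W$ respectively. A direct computation from \eqref{eq:FKss} and the tensorization of \eqref{eq:MFlimss} shows that the quadratic confinement $-\frac{\xi_i}{2}$ appears identically in the $i$-th components $V_i$ and $W_i$, and so cancels, leaving $V_i - W_i = \PV\,\k_\tau\ast(\mu_N - \bmu^\tau)(\xi_i)$. Applying the standard identity
\[
\frac{d}{d\tau}\int f\log\frac{f}{g}\,d\Xi_N = -\frac{1}{\beta}\int f\Bigl|\nabla\log\frac{f}{g}\Bigr|^2 d\Xi_N + \int f\,(V-W)\cdot\nabla\log\frac{f}{g}\,d\Xi_N,
\]
derived by differentiating $\int f\log f - \int f\log g$, integrating by parts on the Laplacian terms, and observing that $-\frac{1}{\beta}\int f\,\Delta\log g$ cancels against the cross $\nabla\log f\cdot\nabla\log g$ term coming from the Fisher contribution, and then normalizing by $\frac{1}{N}$, yields \eqref{eq:dtHNfin}.

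To pass to \eqref{eq:dtHNfin'}, I would split $\nabla_{\xi_i}\log\bigl(\bar{f}_N^\tau/(\bmu^\tau)^{\otimes N}\bigr) = \nabla_{\xi_i}\log\bar{f}_N^\tau - \nabla\log\bmu^\tau(\xi_i)$ and handle the two pieces separately. For the $\nabla_{\xi_i}\log\bar{f}_N^\tau$ piece, integration by parts in $\xi_i$ (with the remaining coordinates frozen) transfers the gradient onto $\PV\,\k_\tau\ast(\mu_N-\bmu^\tau)(\xi_i)$; its $\xi_i$-divergence is $\PV\,\div\k_\tau\ast(\mu_N-\bmu^\tau)(\xi_i)$. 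Recasting $\frac{1}{N}\sum_i$ as an integral against $\mu_N$, this piece contributes
\[
-\E_{\bar{f}_N^\tau}\Bigl[\int_{(\R^\d)^2\setminus\triangle}\div\k_\tau(\xi-\eta)\,d\bigl(\mu_N\otimes(\mu_N-\bmu^\tau)\bigr)(\xi,\eta)\Bigr].
\]
The $-\nabla\log\bmu^\tau(\xi_i)$ piece similarly contributes
\[
-\E_{\bar{f}_N^\tau}\Bigl[\int_{(\R^\d)^2\setminus\triangle}\nabla\log\bmu^\tau(\xi)\cdot\k_\tau(\xi-\eta)\,d\bigl(\mu_N\otimes(\mu_N-\bmu^\tau)\bigr)(\xi,\eta)\Bigr].
\]

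The decisive step is reducing the asymmetric product $\mu_N\otimes(\mu_N-\bmu^\tau)$ to the symmetric $(\mu_N-\bmu^\tau)^{\otimes 2}$. Using the decomposition $\mu_N\otimes(\mu_N-\bmu^\tau) = (\mu_N-\bmu^\tau)^{\otimes 2} + \bmu^\tau\otimes(\mu_N-\bmu^\tau)$ together with the pointwise integration-by-parts identity
\[
\int_{\R^\d}\nabla\log\bmu^\tau(\xi)\cdot\k_\tau(\xi-\eta)\,d\bmu^\tau(\xi) = -\int_{\R^\d}\div\k_\tau(\xi-\eta)\,d\bmu^\tau(\xi),
\]
one sees that the $\bmu^\tau\otimes(\mu_N-\bmu^\tau)$ contributions of the two pieces cancel exactly, leaving integrals only over $(\mu_N-\bmu^\tau)^{\otimes 2}$. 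Finally, since $(\mu_N-\bmu^\tau)^{\otimes 2}$ is symmetric in $(\xi,\eta)$ while $\k_\tau$ is odd (because $\g_\tau$ is even), one may symmetrize
\[
\int \nabla\log\bmu^\tau(\xi)\cdot\k_\tau(\xi-\eta)\,d(\mu_N-\bmu^\tau)^{\otimes 2} = \tfrac{1}{2}\int \bigl(\nabla\log\bmu^\tau(\xi)-\nabla\log\bmu^\tau(\eta)\bigr)\cdot\k_\tau(\xi-\eta)\,d(\mu_N-\bmu^\tau)^{\otimes 2},
\]
delivering \eqref{eq:dtHNfin'}. The main obstacle is correctly identifying and executing this cancellation: the algebra is elementary, but the accounting of principal values, of the evenness of $\div\k_\tau$ and oddness of $\k_\tau$, and of the symmetry of $\mu_N\otimes\mu_N$ and $\bmu^\tau\otimes\bmu^\tau$ separately (but not of $\mu_N\otimes\bmu^\tau$), must be combined in precisely the right way to collapse two asymmetric bilinear forms into a single symmetric one plus the $\div\k_\tau$ remainder.
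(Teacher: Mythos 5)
Your proposal is correct and follows essentially the same route as the paper: the first identity is the same computation (the paper writes it out via the Fokker--Planck operator and a reference measure, you package it as the standard two-flow relative entropy dissipation formula with the confinement drifts cancelling), and your derivation of the second identity uses the same splitting of $\nabla_{\xi_i}\log(\bar{f}_N^\tau/(\bmu^\tau)^{\otimes N})$, the same integration by parts producing the $\div\k_\tau$ term, the same identity $\int\nabla\log\bmu^\tau(\xi)\cdot\k_\tau(\xi-\eta)\,d\bmu^\tau(\xi)=-\int\div\k_\tau(\xi-\eta)\,d\bmu^\tau(\xi)$ to cancel the cross terms, and the same symmetrization via the oddness of $\k_\tau$.
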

\begin{proof}
We argue formally, leaving to the reader the necessary approximations to deal with questions of regularity.

By the chain rule,
\begin{multline}
\frac{d}{d\tau}H_N(\bar{f}_N^{\tau} \vert (\bmu^{\tau})^{\otimes N}) = \frac1N\int_{(\R^\d)^N}\p_\tau\bar{f}_N^{\tau}\log\left(\frac{\bar{f}_N^{\tau}}{(\bmu^{\tau})^{\otimes N}}\right)d\Xi_N + \frac1N\int_{(\R^\d)^N}\p_{\tau}\bar{f}_N^{\tau} d\Xi_N \\
-\frac1N\int_{(\R^\d)^N}\p_{\tau}\Big(\log(\bmu^{\tau})^{\otimes N}\Big)d\bar{f}_N^{\tau}.
\end{multline}
The second term is zero by fundamental theorem of calculus, since $\p_{\tau} \bar{f}_N^{\tau}$ is in divergence form. Substituting in equation \eqref{eq:FKss} for $\p_\tau\bar{f}_N^\tau$,
\begin{multline}
\frac1N\int_{(\R^\d)^N}\p_\tau\bar{f}_N^{\tau}\log\left(\frac{\bar{f}_N^{\tau}}{(\bmu^{\tau})^{\otimes N}}\right)d\Xi_N = \frac1N\sum_{i=1}^N \int_{(\R^\d)^N}\L_{\be,\xi_i}^*\bar{f}_N^\tau\log\left(\frac{\bar{f}_N^\tau}{(\bmu^{\tau})^{\otimes N}}\right)d\Xi_N  \\
-\frac1N\sum_{i=1}^N \int_{(\R^\d)^N}\div_{\xi_i}\left(\bar{f}_N^\tau \frac1N\sum_{1\leq j\leq N : j\neq i}\k_\tau(\xi_i-\xi_j)\right)\log\left(\frac{\bar{f}_N^\tau}{(\bmu^{\tau})^{\otimes N}}\right)d\Xi_N.
\end{multline}
Abbreviating $\bmu_\be^\infty = \bmu_\be$, we see from \eqref{eq:MFlimss}, the chain rule, and the log product-to-sum property that 
\begin{align}
\p_{\tau}\Big(\log(\bmu^{\tau})^{\otimes N}\Big)(\Xi_N) &= \sum_{i=1}^N\left[ \frac1\beta\Delta\log\frac{\bmu^{\tau}}{\bmu_\beta} + \nabla\log{\bmu^{\tau}}\cdot \left(\frac1\beta\nabla\log\frac{\bmu^{\tau}}{\bmu_\beta} - \k_\tau\ast\bmu^{\tau}\right) - \div\k_\tau\ast\bmu^\tau\right](\xi_i) \nn\\
&= \frac{1}{\beta}\Delta_{\Xi_N}\log\left(\frac{\bmu^{\tau}}{\bmu_\beta}\right)^{\otimes N}(\Xi_N) +  \nabla_{\Xi_N}\log(\bmu^{\tau})^{\otimes N}\cdot \frac1\beta\nabla_{\Xi_N}\log\left(\frac{\bmu^{\tau}}{\bmu_\beta}\right)^{\otimes N} \nn\\
&\ph\qquad-\sum_{i=1}^N\Big[ \nabla\log \bmu^\tau(\xi_i)\cdot \k_\tau \ast\bmu^{\tau}(\xi_i) + \div\k_{\tau}\ast\bmu^\tau(\xi_i)\Big],
\end{align}
so that
\begin{multline}
-\int_{(\R^\d)^N}\p_{\tau}\Big(\log(\bmu^{\tau})^{\otimes N}\Big) d\bar{f}_N^{\tau} = -\frac1\be\int_{(\R^\d)^N}\Big[\Delta_{\Xi_N}\log\left(\frac{\bmu^{\tau}}{\bmu_\beta}\right)^{\otimes N} + \nabla_{\Xi_N}\log(\bmu^{\tau})^{\otimes N}\cdot \nabla_{\Xi_N}\log(\frac{\bmu^{\tau}}{\bmu_\be})^{\otimes N}\Big] d\bar{f}_N^{\tau}\\
+\sum_{i=1}^N \int_{(\R^\d)^N}\Big[\nabla\log\bmu^{\tau}\cdot \k_\tau\ast \bmu^{\tau} + \div\k_\tau \ast \bmu^{\tau}\Big](\xi_i) d\bar{f}_N^\tau.
\end{multline}
Using the identity
\begin{align}
\L_{\be,\xi_i}^* = {\frac1\be}\div_{\xi_i}\left(\bmu_\beta^{\otimes N}\nabla_{\xi_i}\left(\frac{\cdot}{\bmu_\beta^{\otimes N}}\right)\right)
\end{align}
and integration by parts, we see that
\begin{align}
\sum_{i=1}^N\int_{(\R^\d)^N}\L_{\be,\xi_i}^*\bar{f}_N^{\tau} \log\left(\frac{\bar{f}_N^{\tau}}{(\bmu^{\tau})^{\otimes N}}\right)d\Xi_N &= -{\frac1\be}\sum_{i=1}^N\int_{(\R^\d)^N} \bmu_\beta^{\otimes N}\nabla_{\xi_i}\left(\frac{\bar{f}_N^{\tau}}{\bmu_\beta^{\otimes N}}\right)\cdot\nabla_{\xi_i}\log\left(\frac{\bar{f}_N^{\tau}}{(\bmu^{\tau})^{\otimes N}}\right)d\Xi_N \nn\\
&=-{\frac1\be}\int_{(\R^\d)^N} \nabla_{\Xi_N}\log\left(\frac{\bar{f}_N^{\tau}}{\bmu_\beta^{\otimes N}}\right)\cdot\nabla_{\Xi_N}\log\left(\frac{\bar{f}_N^{\tau}}{(\bmu^{\tau})^{\otimes N}}\right)d\bar{f}_N^{\tau}. \label{eq:REL1}
\end{align}
Similarly,
\begin{align}
&-\frac1\be\int_{(\R^\d)^N}\Big[\Delta_{\Xi_N}\log\left(\frac{\bmu^{\tau}}{\bmu_\beta}\right)^{\otimes N} + \nabla_{\Xi_N}\log(\bmu^{\tau})^{\otimes N}\cdot \nabla_{\Xi_N}\log\left(\frac{\bmu^{\tau}}{\bmu_\beta}\right)^{\otimes N}\Big] d\bar{f}_N^{\tau}\nn\\
&= \frac1\beta\int_{(\R^\d)^N}\nabla_{\Xi_N}\log\left(\frac{\bmu^{\tau}}{\bmu_\beta}\right)^{\otimes N}\cdot\nabla_{\Xi_N}\log\bar{f}_N^{\tau} d\bar{f}_N^{\tau} \nn\\
&\ph- \frac1\beta\int_{(\R^\d)^N}\nabla_{\Xi_N}\log(\bmu^{\tau})^{\otimes N}\cdot\nabla_{\Xi_N}\log\left(\frac{\bmu^{\tau}}{\bmu_\beta}\right)^{\otimes N}d\bar{f}_N^{\tau}\nn\\
&=\frac{1}{\beta}\int_{(\R^\d)^N}\nabla_{\Xi_N}\log\left(\frac{\bmu^{\tau}}{\bmu_\beta}\right)^{\otimes N}\cdot \nabla_{\Xi_N}\log(\frac{\bar{f}_N^{\tau}}{(\bmu^{\tau})^{\otimes N}}) d\bar{f}_N^{\tau}. \label{eq:REL2}
\end{align}
Combining \eqref{eq:REL1} and \eqref{eq:REL2},
\begin{multline}
 -\frac1{\beta N}\int_{(\R^\d)^N}\left|\nabla_{\Xi_N}\log\left(\frac{\bar{f}_N^{\tau}}{(\bmu^{\tau})^{\otimes N}}\right)\right|^2 d\bar{f}_N^{\tau} = \frac1N\sum_{i=1}^N\int_{(\R^\d)^N}\L_{\be, \xi_i}^*\bar{f}_N^{\tau} \log\left(\frac{\bar{f}_N^{\tau}}{(\bmu^{\tau})^{\otimes N}}\right)d\Xi_N\\
-\frac{1}{\beta N}\int_{(\R^\d)^N}\Big[\Delta_{\Xi_N}\log\left(\frac{\bmu^{\tau}}{\bmu_\beta}\right)^{\otimes N} + \nabla_{\Xi_N}\log(\bmu^{\tau})^{\otimes N}\cdot\nabla_{\Xi_N}\log\left(\frac{\bmu^{\tau}}{\bmu_\beta}\right)^{\otimes N}\Big] d\bar{f}_N^{\tau}.\label{eq:dtHNI}
\end{multline}

Next, integrating by parts the first two terms and recombining, using that $\nabla_{\xi_i}\bar{f}_N^\tau = \bar{f}_N^\tau\nabla_{\xi_i}\log\bar{f}_N^\tau$,
\begin{multline}\label{eq:dtHNfinII}
-\frac1N\sum_{i=1}^N \int_{(\R^\d)^N}\div_{\xi_i}\left(\bar{f}_N^\tau \frac1N\sum_{1\leq j\leq N : j\neq i}\k_\tau(\xi_i-\xi_j)\right)\log\left(\frac{\bar{f}_N^{\tau}}{(\bmu^{\tau})^{\otimes N}}\right)d\Xi_N\\
 + \frac1N\sum_{i=1}^N \int_{(\R^\d)^N} \div(\k_\tau \ast \bmu^{\tau})(\xi_i) d\bar{f}_N^\tau +   \frac1N\sum_{i=1}^N \int_{(\R^\d)^N} \nabla\log\bmu^{\tau}(\xi_i)\cdot (\k_\tau\ast \bmu^{\tau})(\xi_i) d\bar{f}_N^\tau\\
= \frac1N\sum_{i=1}^N \int_{(\R^\d)^N} \PV \ \k_\tau\ast \Big(\frac1N\sum_{j=1}^N \delta_{\xi_j} - \bmu^\tau\Big)(\xi_i) \cdot \nabla_{\xi_i}\log\left(\frac{\bar{f}_N^{\tau}}{(\bmu^{\tau})^{\otimes N}}\right) d\bar{f}_N^\tau,
\end{multline}
where $\PV$ denotes principal value. On the other hand, writing $\log\frac{\bar{f}_N^\tau}{(\bmu^\tau)^{\otimes N}}$ as a difference of logs and integrating by parts the term with $\nabla_{\xi_i}\log\bar{f}_N^\tau$, we see that
\begin{multline}\label{eq:dtHNfinII'}
\frac1N\sum_{i=1}^N \int_{(\R^\d)^N} \PV \ \k_\tau\ast \Big(\frac1N\sum_{j=1}^N \delta_{\xi_j} - \bmu^\tau\Big)(\xi_i) \cdot \nabla_{\xi_i}\log\left(\frac{\bar{f}_N^{\tau}}{(\bmu^{\tau})^{\otimes N}}\right) d\bar{f}_N^\tau\\
=-\frac1N\sum_{i=1}^N\int_{(\R^\d)^N} \nabla\log\bmu^\tau(\xi_i) \cdot\PV \ \k_\tau\ast \Big(\frac1N\sum_{j=1}^N\delta_{\xi_j} - \bmu^{\tau}\Big)(\xi_i) d\bar{f}_N^{\tau} \\ - \frac1N\sum_{i=1}^N\int_{(\R^\d)^N}\PV \ \div\k\ast \Big(\frac1N\sum_{j=1}^N\delta_{\xi_j} - \bmu^{\tau}\Big)(\xi_i)d\bar{f}_N^{\tau}.
\end{multline}
Hence, combining \eqref{eq:dtHNfinII}, \eqref{eq:dtHNfinII'}, we obtain
\begin{multline}\label{eq:dtHNfinII''}
-\frac1N\sum_{i=1}^N \int_{(\R^\d)^N}\div_{\xi_i}\left(\bar{f}_N^\tau \frac1N\sum_{1\leq j\leq N : j\neq i}\k_\tau(\xi_i-\xi_j)\right)\log\left(\frac{\bar{f}_N^{\tau}}{(\bmu^{\tau})^{\otimes N}}\right)d\Xi_N\\
 + \frac1N\sum_{i=1}^N \int_{(\R^\d)^N}  \div(\k_\tau \ast \bmu^{\tau})(\xi_i) d\bar{f}_N^\tau +   \sum_{i=1}^N \int_{(\R^\d)^N} \nabla\log\bmu^{\tau}(\xi_i)\cdot (\k_\tau\ast \bmu^{\tau})(\xi_i) d\bar{f}_N^\tau\\
 =%\frac2N \sum_{i=1}^N \int_{(\R^\d)^N} \PV \ \k_\tau\ast \Big(\frac1N\sum_{j=1}^N \delta_{\xi_j} - \bmu^\tau\Big)(\xi_i) \cdot %\nabla_{\xi_i}\log\left(\frac{\bar{f}_N^{\tau}}{(\bmu^{\tau})^{\otimes N}}\right) d\bar{f}_N^\tau\\
 -\frac1N\sum_{i=1}^N\int_{(\R^\d)^N} \nabla\log \bmu^{\tau}(\xi_i)\cdot\PV \ \k_\tau\ast \Big(\frac1N\sum_{j=1}^N\delta_{\xi_j} - \bmu^{\tau}\Big)(\xi_i) d\bar{f}_N^{\tau} \\ 
 -\frac1N \sum_{i=1}^N\int_{(\R^\d)^N}\PV \ \div\k_\tau\ast \Big(\frac1N\sum_{j=1}^N\delta_{\xi_j} - \bmu^{\tau}\Big)(\xi_i)d\bar{f}_N^{\tau}.
\end{multline}
Since
\begin{align}
\int_{\R^\d}\bmu^{\tau}\nabla\log\bmu^{\tau}\cdot \PV \ \k_\tau\ast \Big(\frac1N\sum_{j=1}^N\delta_{\xi_j} - \bmu^{\tau}\Big) d\xi &=\int_{\R^\d}\nabla\bmu^{\tau} \PV \ \k_\tau\ast \left(\frac1N\sum_{j=1}^N\delta_{\xi_j} - \bmu^{\tau}\right)  d\xi \nn\\
&=-\int_{\R^\d}\bmu^\tau \PV \ \div\k_\tau\ast\Big(\frac1N\sum_{j=1}^N\delta_{\xi_j} - \bmu^{\tau}\Big)d\xi,
\end{align}
we see, after symmetrizing, that
\begin{multline}\label{eq:dtHNfinIII}
\frac1N\sum_{i=1}^N\int_{(\R^\d)^N} \nabla\log \bmu^{\tau}(\xi_i)\cdot \PV \ \k_\tau \ast \Big(\frac1N\sum_{j=1}^N\delta_{\xi_j} - \bmu^{\tau}\Big)(\xi_i) d\bar{f}_N^{\tau} = \\
\frac12\int_{(\R^\d)^N}\int_{(\R^\d)^2\setminus\triangle}\Big(\nabla\log(\bmu^\tau)(\xi)-\nabla\log(\bmu^\tau)(\eta)\Big)\cdot\k_\tau(\xi-\eta)\Big(\frac1N\sum_{j=1}^N\delta_{\xi_j} - \bmu^{\tau}\Big)^{\otimes 2}(\xi,\eta)d\bar{f}_N^{\tau} \\
-\frac1N\sum_{i=1}^N \int_{(\R^\d)^N}\int_{\R^\d}\div\k_\tau\ast\Big(\frac1N\sum_{j=1}^N\delta_{\xi_j} - \bmu^{\tau}\Big) d\bmu^\tau d\bar{f}_N^{\tau}.
\end{multline}
Inserting this identity into the right-hand side of \eqref{eq:dtHNfinII''}, we find that
\begin{multline}\label{eq:dtHNfinIII'}
-\frac1N\sum_{i=1}^N \int_{(\R^\d)^N}\div_{\xi_i}\left(\bar{f}_N^\tau \frac1N\sum_{1\leq j\leq N : j\neq i}\k_\tau(\xi_i-\xi_j)\right)\log\left(\frac{\bar{f}_N^{\tau}}{(\bmu^{\tau})^{\otimes N}}\right)d\Xi_N\\
 + \frac1N\sum_{i=1}^N \int_{(\R^\d)^N} \div(\k_\tau \ast \bmu^{\tau})(\xi_i) d\bar{f}_N^\tau +   \frac1N\sum_{i=1}^N \int_{(\R^\d)^N} \nabla\log\bmu^{\tau}(\xi_i)\cdot (\k_\tau\ast \bmu^{\tau})(\xi_i) d\bar{f}_N^\tau \\
 =-\frac12\int_{(\R^\d)^N}\int_{(\R^\d)^2\setminus\triangle}\Big(\nabla\log(\bmu^\tau)(\xi)-\nabla\log(\bmu^\tau)(\eta)\Big)\cdot\k_\tau(\xi-\eta)\Big(\frac1N\sum_{j=1}^N\delta_{\xi_j} - \bmu^{\tau}\Big)^{\otimes 2}(\xi,\eta)d\bar{f}_N^{\tau}\\
  - \frac1N\sum_{i=1}^N\int_{(\R^\d)^N}\int_{(\R^\d)^2\setminus\triangle}  \div\k_\tau(\xi-\eta)\Big(\frac1N\sum_{j=1}^N\delta_{\xi_j} - \bmu^{\tau}\Big)^{\otimes 2}(\xi,\eta) d\bar{f}_N^{\tau}.
 \end{multline}

%\begin{multline}\label{eq:dtHNfinIII'}
%- \sum_{i=1}^N\int_{(\R^\d)^N}d\bar{f}_N^{\tau} \PV \ \div\k\ast \Big(\frac1N\sum_{j=1}^N\delta_{\bar{x}_j} - \bmu^{\tau}\Big)(\xi_i)  + \int_{(\R^\d)^N}\int_{\R^\d}\div\k\ast\Big(\frac1N\sum_{j=1}^N\delta_{\bar{x}_j} - \bmu^{\tau}\Big) d\bmu^\tau d\bar{f}_N^{\tau} \\
%= - \int_{(\R^\d)^N}\int_{(\R^\d)^2\setminus\triangle} \div\k(x-y)\Big(\frac1N\sum_{j=1}^N\delta_{\bar{x}_j} - \bmu^{\tau}\Big)^{\otimes 2}(x,y) d\bar{f}_N^\tau.
%\end{multline}
Combining \eqref{eq:dtHNI} and \eqref{eq:dtHNfinIII'}, we arrive at the desired \eqref{eq:dtHNfin}, \eqref{eq:dtHNfin'}.
\end{proof}

We now close the Gr\"{o}nwall loop in the relative entropy estimate for when $\M$ is antisymmetric and $\sf=0$. This then establishes the estimate \eqref{eq:mainRE} from \cref{thm:main}.

\begin{prop}\label{prop:REgron}
Suppose that $\sf=0$ and $\M$ is antisymmetric. Then for $\tau\ge 0$,
\begin{multline}\label{eq:REgron1}
H_N(\bar{f}_N^{\tau} \vert (\bmu^{\tau})^{\otimes N})  \le {\exp\Big({\int_0^{\tau}\Big(\|\nabla^{\otimes 2}\log\frac{\bmu^{\tau'}}{\bmu_\be^\infty}\|_{L^\infty} - e^{-2\|\log\frac{\bmu^{\tau'}}{\bmu_\be^\infty}\|_{L^\infty}}\Big)d\tau'  }\Big)} H_N(\bar{f}_N^{0} \vert (\bmu^{0})^{\otimes N}) \\
+ \frac{C_*}{N}\int_0^\tau {\exp\Big({\int_{\tau'}^{\tau}\Big(\|\nabla^{\otimes 2}\log\frac{\bmu^{\tau''}}{\bmu_\be^\infty}\|_{L^\infty} - e^{-2\|\log\frac{\bmu^{\tau''}}{\bmu_\be^\infty}\|_{L^\infty}}\Big)d\tau''  }\Big)}\| \nabla^{\otimes 2}\log\frac{\bmu^{\tau'}}{\bmu_\be^\infty}\|_{L^\infty}   d\tau',
\end{multline}
and for $t\ge 0$,
\begin{multline}\label{eq:REgron2}
H_N({f}_N^{t} \vert (\mu^{t})^{\otimes N}) \le {\exp\Big({\int_0^{\log(t+1)}\Big(\|\nabla^{\otimes 2}\log\frac{\bmu^{\tau'}}{\bmu_\be^\infty}\|_{L^\infty} - e^{-2\|\log\frac{\bmu^{\tau'}}{\bmu_\be^\infty}\|_{L^\infty}}\Big)d\tau'  }\Big)} H_N(f_N^0 \vert (\mu^0)^{\otimes N})  \\
+ \frac{C_*}{N}\int_0^{\log(t+1)} {\exp\Big({\int_{\tau'}^{\log(t+1)}\Big(\|\nabla^{\otimes 2}\log\frac{\bmu^{\tau''}}{\bmu_\be^\infty}\|_{L^\infty} - e^{-2\|\log\frac{\bmu^{\tau''}}{\bmu_\be^\infty}\|_{L^\infty}}\Big)d\tau''  }\Big)}\| \nabla^{\otimes 2}\log\frac{\bmu^{\tau'}}{\bmu_\be^\infty}\|_{L^\infty}   d\tau',
\end{multline}
where $C_*$ is the constant from \cref{lem:REcomm}.
\end{prop}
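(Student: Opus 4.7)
The plan is to combine the dissipation identity of \cref{lem:REdiss} with the Jabin-Wang commutator estimate \cref{lem:REcomm} and a logarithmic Sobolev inequality coming from Gross's inequality and Holley-Stroock perturbation, then close a Gr\"onwall loop in self-similar time $\tau$. First, I would simplify the right-hand side of \eqref{eq:dtHNfin}-\eqref{eq:dtHNfin'}: since $\M$ is antisymmetric and $\g_\tau$ is radial, $\div\k_\tau = \mathrm{tr}(\M D^2\g_\tau) = 0$, killing the last integral; and since $\sf=0$ the renormalization is trivial ($\g_\tau = \g$, $\k_\tau = \k$). Thus only the first commutator line survives,
\begin{equation*}
\frac{d}{d\tau}H_N(\bar{f}_N^\tau \vert (\bmu^\tau)^{\otimes N}) = -\frac{1}{\beta}I_N - \frac12 \E_{\bar{f}_N^\tau}\!\!\int_{(\R^\d)^2\setminus\triangle}\!\!\!\!\!(\nabla\log\bmu^\tau(\xi) - \nabla\log\bmu^\tau(\eta))\cdot \k(\xi-\eta) \, d\Big(\tfrac1N\!\sum \delta_{\xi_j} - \bmu^\tau\Big)^{\otimes 2}.
\end{equation*}

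Second, I exploit the cancellation that self-similar coordinates are designed to produce. Writing $\nabla\log\bmu^\tau = \nabla\log\bmu_\be^\infty + \nabla\log\frac{\bmu^\tau}{\bmu_\be^\infty}$, I note that $\nabla\log\bmu_\be^\infty(\xi) = -\frac{\be}{2}\xi$ is \emph{linear}, so its contribution to the integrand is proportional to $(\xi-\eta)\cdot\k(\xi-\eta) = -(\xi-\eta)\cdot \M\tfrac{\xi-\eta}{|\xi-\eta|^2}$, which vanishes identically by antisymmetry of $\M$. Only the remainder $w^\tau := \nabla\log\frac{\bmu^\tau}{\bmu_\be^\infty}$ contributes, and this is precisely the object whose $L^\infty$ Lipschitz norm is controlled by hypothesis. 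Applying \cref{lem:REcomm} with $v = w^\tau$ and the bound $|\k(x)| \le \|\M\|_{\mathrm{op}}/|x|$ yields
\begin{equation*}
|\text{commutator}| \le C_* \Big\|\nabla^{\otimes 2}\log\tfrac{\bmu^\tau}{\bmu_\be^\infty}\Big\|_{L^\infty}\Big(\tfrac{1}{N} + H_N(\bar{f}_N^\tau \vert (\bmu^\tau)^{\otimes N})\Big).
\end{equation*}

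Third, I would convert the Fisher-information dissipation $-\frac1\be I_N$ into a linear-decay term for $H_N$ via an LSI. The Gaussian $\bmu_\be^\infty$ has covariance $(2/\be)I$ and so satisfies Gross's LSI with constant $1/\be$; Holley-Stroock perturbation, applied to $\bmu^\tau = \bmu_\be^\infty \cdot (\bmu^\tau/\bmu_\be^\infty)$ with $\mathrm{osc}(-\log\tfrac{\bmu^\tau}{\bmu_\be^\infty}) \le 2\|\log\tfrac{\bmu^\tau}{\bmu_\be^\infty}\|_{L^\infty}$, upgrades this to an LSI for $\bmu^\tau$ with constant $\frac1\be e^{2\|\log(\bmu^\tau/\bmu_\be^\infty)\|_{L^\infty}}$; tensorization preserves this constant under our $1/N$-normalization, giving
\begin{equation*}
-\tfrac{1}{\be}I_N(\bar{f}_N^\tau \vert (\bmu^\tau)^{\otimes N}) \le - e^{-2\|\log(\bmu^\tau/\bmu_\be^\infty)\|_{L^\infty}} H_N(\bar{f}_N^\tau \vert (\bmu^\tau)^{\otimes N}).
\end{equation*}
Substituting into the dissipation identity yields a scalar ODE of the form $\frac{d}{d\tau}H_N \le A(\tau) H_N + \frac{C_*}{N}B(\tau)$ with $A(\tau) = C_*\|\nabla^{\otimes 2}\log\frac{\bmu^\tau}{\bmu_\be^\infty}\|_{L^\infty} - e^{-2\|\log(\bmu^\tau/\bmu_\be^\infty)\|_{L^\infty}}$, to which Gr\"onwall's lemma applies to produce \eqref{eq:REgron1} (up to absorbing $C_*$ in the definition of the relevant norm). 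The estimate \eqref{eq:REgron2} in original $(t,x)$ variables then follows immediately from the invariance $H_N(f_N^t\vert(\mu^t)^{\otimes N}) = H_N(\bar{f}_N^\tau \vert (\bmu^\tau)^{\otimes N})$ at $\tau = \log(t+1)$ established in \cref{lem:REss}. The main obstacle is the cancellation in the second paragraph: it is what the whole self-similar framework is engineered to enable, since in original coordinates no such cancellation holds and $\|\nabla^{\otimes 2}\log\mu^t\|_{L^\infty}$ is uncontrollable for Gaussian-decaying densities.
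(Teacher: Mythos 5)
Your proposal is correct and takes essentially the same route as the paper's proof: discard the $\div\k$ term and the linear contribution of $\nabla\log\bmu_\be^\infty$ by antisymmetry of $\M$, apply the commutator lemma to $v=\nabla\log\frac{\bmu^\tau}{\bmu_\be^\infty}$, convert the Fisher information into entropy decay via the Gaussian LSI with Holley--Stroock perturbation and tensorization, integrate the resulting Gr\"onwall inequality, and revert to $(t,x)$ coordinates by the invariance of the relative entropy. The minor bookkeeping point you flag about absorbing $C_*$ into the exponential factor is present in the paper's own statement as well, so it is not a gap in your argument.
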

\begin{proof}
We continue to abbreviate $\bmu_\be^\infty = \bmu_\be$. Since $\div\k = 0$ and $(x-y)\cdot\k(x-y)=0$, hence
\begin{align}
\Big(\nabla\log\bmu_\be(x) - \nabla\log\bmu_\be(y)\Big)\cdot \k(x-y) = 0,
\end{align}
the dissipation identity \eqref{eq:dtHNfin} simplifies to
\begin{multline}
\frac{d}{d\tau}H_N(\bar{f}_N^{\tau} \vert (\bmu^{\tau})^{\otimes N}) = -\frac{1}{\beta}I_N(\bar{f}_N^{\tau} \vert (\bmu^{\tau})^{\otimes N}) \\
-\frac12\E_{\bar{f}_N^\tau}\Bigg[\int_{(\R^\d)^2\setminus\triangle}\Big(\nabla\log(\frac{\bmu^\tau}{\bmu_\be})(\xi)-\nabla\log(\frac{\bmu^\tau}{\bmu_\be})(\eta)\Big)\cdot\k(\xi-\eta)\Big(\frac1N\sum_{j=1}^N\delta_{\xi_j} - \bmu^{\tau}\Big)^{\otimes 2}(\xi,\eta)\Bigg].
\end{multline}
Since $\bmu_\be$ satisfies an LSI with constant $\frac1\be$ \cite{Gross1975}, Holley-Stroock perturbation implies that $\bmu^\tau$ satisfies an LSI with constant $\frac{1}{\be}e^{2\|\log\frac{\bmu^\tau}{\bmu_\be}\|_{L^\infty}}$, and by tensorization $(\bmu^\tau)^{\otimes N}$ satisfies an LSI with the same constant.\footnote{We learned of this argument from \cite{GlBM2021}.} For instance, see \cite[Propositions 5.6.1, 6.5.1]{BGL2014}. Hence,
\begin{align}\label{eq:FIbfNbmu}
-\frac{1}{\beta}I_N(\bar{f}_N^{\tau} \vert (\bmu^{\tau})^{\otimes N}) \le -e^{-2\|\log\frac{\bmu^\tau}{\bmu_\be}\|_{L^\infty}}H_N(\bar{f}_N^{\tau} \vert (\bmu^{\tau})^{\otimes N}).
\end{align}
Applying \cref{lem:REcomm} to the commutator term ($C_{\k}=1$ in this application), we then find that
\begin{multline}
\frac{d}{d\tau}H_N(\bar{f}_N^{\tau} \vert (\bmu^{\tau})^{\otimes N}) \le - e^{-2\|\log\frac{\bmu^\tau}{\bmu_\be}\|_{L^\infty}}H_N(\bar{f}_N^{\tau} \vert (\bmu^{\tau})^{\otimes N}) \\
+\frac{{C_*}\| \nabla^{\otimes 2}\log\frac{\bmu^\tau}{\bmu_\be}\|_{L^\infty}}{N} + C_*\| \nabla^{\otimes 2}\log\frac{\bmu^\tau}{\bmu_\be}\|_{L^\infty} H_N(\bar{f}_N^\tau \vert (\bmu^\tau)^{\otimes N}).
\end{multline}
Integrating this differential inequality yields \eqref{eq:REgron1}. Appealing to \cref{lem:REss} to return to our original $(t,x)$ coordinates yields \eqref{eq:REgron2}, completing the proof of the proposition.
\end{proof}

\subsection{Modulated free energy}\label{ssec:MAINmfe}

We recall the almost positivity estimate for the modulated energy. The case $\sf<\d-2$ is taken from \cite[Remark 5.10]{RS2021}, which is a refinement of \cite[Proposition 2.1]{NRS2021}; while the second case $\d>\sf\ge \d-2$ is taken from \cite{RS2022} (see also \cite[Proposition 6.4]{CdCRS2023}), which improves upon \cite[Corollary 3.4]{Serfaty2020}. {It is expected that \eqref{eq:MEpossupC}, in fact, holds for all $\min(\d-2,0)\leq \sf<\d$, but this remains open.}

\begin{lemma}\label{lem:MEpos}
If $\d-2\le \sf<\d$, then
\begin{align}\label{eq:MEpossupC}
\Fr_N(\XN,\mu) \ge -\Big( \frac{\log(N\|\mu\|_{L^\infty})}{2N\d}\indic_{\sf=0} + \Cs\|\mu\|_{L^\infty}^{\frac{\sf}{\d}}N^{\frac{\sf}{\d}-1}\Big),
\end{align}
and if $\sf<\d-2$, then
\begin{align}\label{eq:MEpossubC}
\Fr_N(\XN,\mu) \ge -\mathsf{C}\Big(\frac{(\log N + |\log \|\mu\|_{L^\infty}|)}{N}\indic_{\sf=0} + \|\mu\|_{L^\infty}^{\frac{\sf}{\d}} N^{-\frac{2(\d-\sf)}{2(\d-\sf)+\sf(\d+2)}}\Big).
\end{align}
Above, $\mathsf{C}>0$ is a constant depending only on $\d,\sf$. 
\end{lemma}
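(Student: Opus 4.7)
The plan is to prove the two cases separately by the smearing/truncation technique that is standard for modulated energies of Riesz type, working with the electric/extension formulation of $\Fr_N$. Namely, when $\sf \in (\d-2, \d)$ one represents $\g$ as (up to a dimensional constant) the fundamental solution of $-\div(|y|^{\ga}\nabla\cdot)$ in an extended space $\R^{\d+\k}$ with $\ga = \sf-\d+2-\k$ (for Coulomb $\sf=\d-2$, no extension is needed; for log, use $\d=2$ directly). Then
\begin{align}
\Fr_N(\XN,\mu) = \lim_{\ep\to 0}\Bigl(\frac{\cd}{2}\int_{\R^{\d+\k}}|\nabla H^{\XN,\ep}|^2 |y|^{\ga}d(x,y) - \frac{\g(\ep)}{2N}\Bigr),
\end{align}
where $H^{\XN,\ep}$ is the potential generated by $\frac1N\sum_i\delta_{x_i}^{(\ep)} - \mu$ with $\delta_{x_i}^{(\ep)}$ a uniform measure on the sphere of radius $\ep$ around $(x_i,0)$. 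The Dirichlet integral on the right is nonnegative, so the task reduces to controlling the errors introduced by the smearing.

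For the range $\d-2\le \sf<\d$ (including log), one introduces individualized truncation radii $\eta_i = \tfrac12\min(\rs_i, (N\|\mu\|_{L^\infty})^{-1/\d})$, where $\rs_i = \min_{j\ne i}|x_i-x_j|$, and replaces the divergent self-interaction by $\g(\eta_i)$. Expanding $\Fr_N(\XN,\mu)$ against the smeared measures, one collects the positive Dirichlet term, the negative self-energy sum $\tfrac1{2N^2}\sum_i\g(\eta_i)$, and an error from convolving $\g-\g^{(\eta_i)}$ against $\mu$, controlled by $C\|\mu\|_{L^\infty}\eta_i^{\d-\sf}$. Summing over $i$ and using $\eta_i\le (N\|\mu\|_{L^\infty})^{-1/\d}$ yields the bound $\Cs\|\mu\|_{L^\infty}^{\sf/\d}N^{\sf/\d-1}$ for $\sf>0$, and the $\tfrac{\log(N\|\mu\|_{L^\infty})}{2N\d}$ term for $\sf=0$, exactly as in \eqref{eq:MEpossupC}. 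A lower bound on the self-energy via the nearest-neighbor distances (these terms can even be shown to be mostly positive) is needed to ensure one does not pay more than the advertised rate.

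For the sub-Coulomb range $\sf<\d-2$, the above direct smearing yields a suboptimal bound because $\eta_i^{\d-\sf}$ is too large relative to what can be absorbed. The refined strategy follows \cite[Remark 5.10]{RS2021}: one uses a single, global smearing radius $\eta$ and estimates the cross term $\int(\g-\g^{(\eta)})\ast\mu\, d(\tfrac1N\sum\delta_{x_i})$ using its $\dot H^{-(\d-\sf)/2}$-type structure, trading a factor $\|\mu\|_{W^{1,\infty}}\eta^2$ via Bernstein-type estimates against a factor $\eta^{\d-\sf}$ from the size of $\mu$. Minimizing the resulting expression $\alpha\eta^{\d-\sf} + \beta N^{-1}\eta^{-(\sf+\text{correction})}$ in $\eta$ produces precisely the exponent $\frac{2(\d-\sf)}{2(\d-\sf)+\sf(\d+2)}$ appearing in \eqref{eq:MEpossubC}. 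The log case $\sf=0$ is handled by the same scheme with logarithmic corrections.

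The main obstacle is the sub-Coulomb case: the delicate balancing of the smearing-induced errors that produces the unusual exponent, which is why a refinement over the crude NRS-type bound of \cite{NRS2021} is required. The super-Coulomb/Coulomb/log cases are, by contrast, a fairly direct consequence of the truncation-plus-Dirichlet-positivity scheme.
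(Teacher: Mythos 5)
You should first note that the paper itself does not prove this lemma: it is imported from the cited references ( \cite{RS2022} and \cite[Proposition 6.4]{CdCRS2023} for $\d-2\le \sf<\d$, and \cite[Remark 5.10]{RS2021}, refining \cite[Proposition 2.1]{NRS2021}, for $\sf<\d-2$), so the only meaningful comparison is with those external arguments. Your treatment of the range $\d-2\le \sf<\d$ is essentially the argument of those references and looks sound: the electric/extension representation (with weight exponent $\ga=\sf-\d+1\in(-1,1)$, no extension at Coulomb), truncation at radii $\eta_i=\tfrac12\min(\rs_i,(N\|\mu\|_{L^\infty})^{-1/\d})$, positivity of the Dirichlet integral, the smearing error against $\mu$ of size $\|\mu\|_{L^\infty}\eta_i^{\d-\sf}$, and the self-energy term producing exactly $\frac{\log(N\|\mu\|_{L^\infty})}{2\d N}$ resp.\ $\Cs\|\mu\|_{L^\infty}^{\sf/\d}N^{\sf/\d-1}$, with the close-pair interactions compensating the points where $\eta_i=\rs_i/2$.

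The sub-Coulomb half, however, has a genuine gap. First, your explanation of why the direct scheme fails is off: for $\sf<\d-2$ the extension weight satisfies $\ga<-1$, so the electric representation and the Newton-type monotonicity of truncation are simply unavailable; if the truncation argument did go through, it would give the \emph{better} exponent $N^{\sf/\d-1}$, which is precisely what the paper records as expected but open. Second, and more importantly, your sketch does not actually produce the stated exponent: you invoke a factor $\|\mu\|_{W^{1,\infty}}\eta^2$, whereas \eqref{eq:MEpossubC} depends only on $\|\mu\|_{L^\infty}$, and you leave the competing power as ``$\eta^{-(\sf+\mathrm{correction})}$'' without identifying the correction. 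That identification is the entire content of \cite[Proposition 2.1]{NRS2021} and \cite[Remark 5.10]{RS2021}: because there is no Newton theorem, smearing at a global scale $\eta$ perturbs the pairwise interactions at \emph{all} separations, and one must control a Riesz-type pair sum of order $\sf+2$, reabsorbed using the energy itself; whatever its precise form, the competing error must scale like $N^{-1}\eta^{-\sf(\d+2)/2}$, since balancing this against $\|\mu\|_{L^\infty}\eta^{\d-\sf}$ is what yields $N^{-\frac{2(\d-\sf)}{2(\d-\sf)+\sf(\d+2)}}$ (and, at $\sf=0$, $\d>2$, the $\frac{\log N+|\log\|\mu\|_{L^\infty}|}{N}$ term). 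As written, the exponent is asserted rather than derived; you should either carry out this bookkeeping explicitly or, as the paper does, cite the references for this case.
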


We next recall the functional inequality  used to estimate the commutator term by the modulated energy, as discussed in the introduction. The case $\d-2\le \sf<\d$ is taken from \cite{RS2022} (see \cite[Proposition 2.13]{CdCRS2023} for a sketch of the proof on the torus) and is sharp, improving upon \cite[Proposition 1.1]{Serfaty2020}. The case $\sf<\d-2$ is taken from \cite[Proposition 5.15]{RS2021}, which is a refinement of \cite[Proposition 4.1]{NRS2021}. The error estimates presented below differ slightly, though not in an essential way, from the cited reference due to a more a precise optimization of the parameters.

\begin{lemma}\label{lem:MEcomm}
There exists a constant $C>0$, depending only on $\d,\sf$, such that for a vector field $v:\R^\d\rightarrow\R^\d$, probability density $\mu$ on $\R^\d$, and pairwise distinct $\XN\in (\R^\d)^N$,
\begin{multline}\label{commutator}
\left|\int_{(\R^\d)^2\setminus\triangle} (v(x)-v(y))\cdot \nabla\g(x-y) d\Big(\frac1N\sum_{i=1}^N\delta_{x_i} - \mu\Big)^{\otimes 2}(x,y)\right|\\ \le C \|v\|_{*} \( F_N(\XN, \mu) +  o_N(1)\),
\end{multline}
where
\begin{align}
\|v\|_{*} \coloneqq \begin{cases} \|\nabla v\|_{L^\infty}, & {\sf\geq \d-2} \\ \|\nabla v\|_{L^\infty} + \|(-\Delta)^{\frac{\d-\sf}{4}}v\|_{L^{\frac{2\d}{\d-2-\sf}}}, & {\sf<\d-2}, \end{cases}
\end{align}
and
\begin{align}\label{eq:oN1def}
o_N(1) = \begin{cases}\frac{\log(N\|\mu\|_{L^\infty})}{2N\d}\indic_{\sf=0} + \Cs\|\mu\|_{L^\infty}^{\frac{\sf}{\d}}N^{\frac{\sf}{\d}-1}, & {\sf\geq\d-2}\\ \\ \frac{\Cs(\log N + |\log\|\mu\|_{L^\infty}|)}{N}\indic_{\sf=0} + \Cs\|\mu\|_{L^\infty}^{\frac{\sf}{\d}}N^{-\frac{{2(\d-\sf)}}{\left(\sf({\d+2})+{2(\d-\sf)}\right)(1+\sf)}}, & {\sf<\d-2}. \end{cases}
\end{align}
\end{lemma}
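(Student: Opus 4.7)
The plan is to reformulate the left-hand side of \eqref{commutator}, together with $F_N(X_N,\mu)$ itself, in terms of an ``electric'' potential $h \coloneqq \g \ast \bigl(\tfrac1N\sum_i \delta_{x_i} - \mu\bigr)$. For $\sf>\d-2$ this potential is first lifted to a Caffarelli--Silvestre extension in $\R^{\d+k}$ so that $\g$ becomes the fundamental solution of a local weighted Laplacian; for $\sf\leq\d-2$ one works directly on $\R^\d$. In the resulting electric picture, $F_N(X_N,\mu)$ equals, up to divergent self-energies on the diagonal, a Dirichlet-type integral $c\int|\nabla h|^2$ (weighted, in the extension), while the commutator on the left-hand side of \eqref{commutator} equals, again up to diagonal contributions, the pairing $-c\int(\nabla h\otimes\nabla h-\tfrac12|\nabla h|^2 I):\nabla v$ of the stress-energy tensor of $h$ with $\nabla v$. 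Once this identity is in place, bounding the integrand pointwise by $\|\nabla v\|_{L^\infty}|\nabla h|^2$ produces the desired clean control by $\|\nabla v\|_{L^\infty}F_N(X_N,\mu)$, modulo the treatment of the diagonal.

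To make these identities rigorous, the second step is to run the truncation procedure developed in \cite{SS2015,RS2016,Serfaty2020,NRS2021}. Each $\delta_{x_i}$ is replaced by a smeared surface measure $\delta_{x_i}^{(\eta_i)}$ at a scale $\eta_i\sim\tfrac14\min_{j\ne i}|x_i-x_j|$ chosen so the smeared charges do not overlap; the resulting truncated potential $h_\eta$ has finite Dirichlet energy, and a nearest-neighbor / ball-growing argument shows that $\int|\nabla h_\eta|^2$ agrees with $F_N(X_N,\mu)$ up to the additive error displayed in \eqref{eq:oN1def}. The same scheme applied to the stress-energy identity produces the matching commutator error. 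A dyadic summation over the scales $\eta_i$, optimized against $\|\mu\|_{L^\infty}$ through a volume bound and using \cref{lem:MEpos} to compare positive and negative parts of $F_N$, yields the sharp exponents in \eqref{eq:oN1def}.

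The main obstacle is the sub-Coulomb range $\sf<\d-2$, where the stress-energy pairing cannot be controlled by the Dirichlet energy alone: $F_N$ corresponds (via extension) to a norm on the fluctuation strictly weaker than the Coulomb energy, so the identity ``gives away'' regularity on $v$ that $F_N$ cannot absorb, and scale-invariance is broken. Following \cite[Prop.~5.15]{RS2021}, one decomposes $v$ into a low-frequency part producing the $\|\nabla v\|_{L^\infty}F_N$ contribution, and a high-frequency remainder which, after writing it via its Riesz potential representation and applying H\"older's and Sobolev embedding, yields a factor $\|(-\Delta)^{(\d-\sf)/4}v\|_{L^{2\d/(\d-2-\sf)}}$ paired with the square root of the modulated energy. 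The Lebesgue exponent $\tfrac{2\d}{\d-2-\sf}$ is precisely the one forced by the scaling of $\g$ on $\R^\d$. Balancing the truncation parameters in this regime against the high-frequency cutoff is the most delicate step, and produces the rate $N^{-2(\d-\sf)/((\sf(\d+2)+2(\d-\sf))(1+\sf))}$ appearing in \eqref{eq:oN1def}.
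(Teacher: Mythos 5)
The paper does not prove \cref{lem:MEcomm} at all: it recalls the case $\sf\ge\d-2$ from \cite{RS2022} (cf.\ \cite[Proposition 2.13]{CdCRS2023}, improving \cite[Proposition 1.1]{Serfaty2020}) and the case $\sf<\d-2$ from \cite[Proposition 5.15]{RS2021} (refining \cite[Proposition 4.1]{NRS2021}), and your sketch---rewriting both $F_N$ and the commutator through the potential $h=\g\ast(\frac1N\sum_i\delta_{x_i}-\mu)$ and its stress-energy tensor, using the weighted extension (to $\R^{\d+1}$, with weight $|z|^\gamma$, rather than a generic $\R^{\d+k}$) together with smearing of the Diracs at nearest-neighbor scales, and, in the sub-Coulomb range where the extension weight degenerates, paying the extra norm $\|(-\Delta)^{\frac{\d-\sf}{4}}v\|_{L^{2\d/(\d-2-\sf)}}$ via H\"older/Sobolev on the rough part of $v$, with the truncation-scale optimization producing the $(1+\sf)$ loss in \eqref{eq:oN1def}---is an accurate outline of exactly those cited arguments. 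So you are following essentially the same route the paper relies on; the only imprecision worth noting is that the sub-Coulomb contribution is bilinear in the fluctuation and hence enters linearly in $F_N(\XN,\mu)+o_N(1)$ as stated in \eqref{commutator}, not merely against a single square root of the modulated energy.
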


As advertised in the introduction, the modulated free energy in self-similar coordinates satisfies the following dissipation identity (cf. \eqref{eq:introdtMFE}). The reader will recall the definition of $\bar{E}_N^\tau$ from \eqref{eq:MFEss2}.

\begin{lemma}\label{lem:MFEdiss}
Let $\bar{f}_N$ and $\bmu$ be solutions of equations \eqref{eq:FKss} and \eqref{eq:MFlimss}, respectively, on $[0,\tau_0]$. Then for $\tau \in [0,\tau_0]$, the following hold. If $\M$ is antisymmetric, then
\begin{multline}\label{eq:MFEdiss1}
\frac{d}{d\tau}\bar{E}_N^\tau(\bar{f}_N^\tau,\bmu^\tau) \le -\frac{1}{\be^2}I_N(\bar{f}_N^\tau\vert (\bmu^\tau)^{\otimes N}) -\frac{\sf}{4}\E_{\bar{f}_N^\tau}[\bar{F}_N^\tau(\Xi_N,\bmu^\tau)] - \frac{1}{4N}\indic_{\sf=0}\\
+\frac12\E_{\bar{f}_N^\tau}\Bigg[\int_{(\R^\d)^2\setminus\triangle}\Big(\bar{u}^\tau(\xi)-\bar{u}^\tau(\eta)\Big)\cdot \nabla\g(\xi-\eta)d\Big(\frac1N\sum_{i=1}^N \delta_{\xi_i}-\bmu^\tau\Big)^{\otimes 2}(\xi,\eta)\Bigg] \\
+ \frac{1}{\be}\E_{\bar{f}_N^\tau}\Bigg[\int_{(\R^\d)^2\setminus\triangle} \D\g_\tau(\xi-\eta)d\Big(\frac1N\sum_{i=1}^N \delta_{\xi_i}-\bmu^\tau\Big)^{\otimes 2}(\xi,\eta)\Bigg],
\end{multline}
where $\bar{u}^\tau \coloneqq \M\nabla\log\frac{\bmu^\tau}{\bmu_\be^\tau} + \k_\tau\ast(\bmu^\tau - \bmu_\be^\tau)$. If $\M=-\I$, then
\begin{multline}\label{eq:MFEdiss2}
\frac{d}{d\tau}\bar{E}_N^\tau(\bar{f}_N^\tau,\bmu^\tau) \le -\frac{1}{\be^2}I_N(\bar{f}_N^\tau \vert \mathbb{Q}_{N,\be}^{\tau}(\bmu^\tau)) - \frac{\sf}{2}\E_{\bar{f}_N^\tau}[\bar{F}_N^\tau(\Xi_N,\bmu^\tau)] \\
- \frac12\E_{\bar{f}_N^\tau}\Bigg[\int_{(\R^\d)^2\setminus\triangle}(\bar{u}^\tau(\xi)-\bar{u}^\tau(\eta))\cdot \nabla\g_\tau(\xi-\eta)d\Big(\frac1N\sum_{i=1}^N \delta_{\xi_i}-\bmu^\tau\Big)^{\otimes 2}(\xi,\eta)\Bigg],
\end{multline}
where $\bar{u}^\tau \coloneqq  \frac1\be\nabla\log\frac{\bmu^\tau}{\bmu_\be^\tau} + \nabla\g\ast(\bmu^\tau-\bmu_\be^\tau)$ {and $\Q_{N,\be}(\bmu^\tau)$ is as in \eqref{eq:mGM}.}
\end{lemma}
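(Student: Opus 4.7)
The plan is to compute $\frac{d}{d\tau}\bar E_N^\tau(\bar f_N^\tau,\bmu^\tau)$ directly, writing it as the sum of $\frac{1}{\be}\frac{d}{d\tau}H_N(\bar f_N^\tau \vert (\bmu^\tau)^{\otimes N})$, whose expression is supplied by Lemma~\ref{lem:REdiss} applied with renormalized drift $\k_\tau$, and $\frac{d}{d\tau}\E_{\bar f_N^\tau}[\bar F_N^\tau(\Xi_N,\bmu^\tau)]$. The latter I would compute from scratch by differentiating under the expectation and appealing to the self-similar forward Kolmogorov equation~\eqref{eq:FKss} and the self-similar mean-field equation~\eqref{eq:MFlimss}. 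After integrations by parts in $\Xi_N$ (using that $\bar F_N^\tau$ is quadratic in the empirical measure) and in $\R^\d$ (using the linearization of $\bar F_N^\tau$ in the measure argument), one obtains four types of contributions: a viscous term $\frac{1}{\be}\int \Delta \g_\tau\, d\bigl(\tfrac1N\sum_{i=1}^N \delta_{\xi_i}-\bmu^\tau\bigr)^{\otimes 2}$ arising from the Laplacian in~\eqref{eq:FKss}; a commutator bilinear form in $\nabla \g_\tau$ contracted against an effective velocity; a transport piece coming from the confining drift $-\nabla(\tfrac14|\xi|^2)=-\tfrac{\xi}{2}$; and an explicit $\p_\tau \g_\tau=-\tfrac{\sf}{2}\g_\tau$ contribution (zero when $\sf=0$).

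Next, I would combine the modulated energy dissipation with~\eqref{eq:dtHNfin}--\eqref{eq:dtHNfin'} and perform the two case-dependent simplifications familiar from the torus/confined analysis of~\cite{BJW2019edp, RS2021}. For $\M$ antisymmetric, $\div\k_\tau=0$, so the divergence commutator in~\eqref{eq:dtHNfin'} drops and the $\frac{1}{\be}\Delta\g_\tau$ term from the modulated energy remains intact on the right-hand side of~\eqref{eq:MFEdiss1}. For $\M=-\I$, one has $\div\k_\tau=-\Delta\g_\tau$, so this divergence commutator cancels the $\frac{1}{\be}\Delta\g_\tau$ term, and the relative Fisher information $\frac{1}{\be^2}I_N(\bar f_N^\tau \vert (\bmu^\tau)^{\otimes N})$ together with a cross term from the commutator combines into the Fisher information against the modulated Gibbs measure $\Q_{N,\be}^\tau(\bmu^\tau)$, by a completion-of-squares computation recognizing $-\be N \bar F_N^\tau$ in the log-density of $\Q_{N,\be}^\tau(\bmu^\tau)$. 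The effective velocity $\bar u^\tau$ emerging from the commutator is rewritten in the stated form by subtracting the gradient of the Euler--Lagrange relation~\eqref{eq:bmubetaurel}, which in particular eliminates the explicit $\tfrac{\xi}{2}$.

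The genuinely new ingredients relative to the confined/torus case are the explicit $\sf$-weighted terms $-\tfrac{\sf}{2}\E_{\bar f_N^\tau}[\bar F_N^\tau]$ or $-\tfrac{\sf}{4}\E_{\bar f_N^\tau}[\bar F_N^\tau]$ and the constant $-\tfrac{1}{4N}\indic_{\sf=0}$. These come from combining the contribution of $\p_\tau \g_\tau$ with the transport piece generated by the confining drift: for $\sf>0$, Euler's identity $\xi\cdot\nabla\g(\xi)=-\sf\g(\xi)$ produces, after pairing against the empirical-minus-density tensor product, a multiple of $\E_{\bar f_N^\tau}[\bar F_N^\tau]$; for $\sf=0$, the analogous identity $\xi\cdot\nabla\g(\xi)=-1$ yields a constant which, once integrated against the empirical-minus-density tensor product, collapses to the $\tfrac{1}{4N}$ correction after accounting for mass conservation. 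The main obstacle is keeping accurate track of the coefficients at this stage and reconciling them with those appearing in~\eqref{eq:MFEdiss1} and~\eqref{eq:MFEdiss2}, including the asymmetry between the antisymmetric and gradient cases (which arises because the commutator in the antisymmetric case is further symmetrized by $\M$-antisymmetry, halving the relevant coefficient). I would carry out the $\sf=0$ and $\sf>0$ cases in parallel and cross-check by undoing the self-similar change of variables and recovering the flat identity~\eqref{eq:introdtMFE}.
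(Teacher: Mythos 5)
Your plan follows the paper's proof: transport the modulated-energy dissipation \eqref{eq:introdtME} to self-similar variables with the drift $-\k_\tau\ast\bmu^\tau+\tfrac12\xi$ plus the extra $-\tfrac{\sf}{2}\E_{\bar f_N^\tau}[\bar F_N^\tau]$ term from $\p_\tau$ hitting $e^{-\sf\tau/2}$, add $\tfrac1\be$ times the relative-entropy dissipation of \cref{lem:REdiss}, use $\div\k_\tau=0$ in the antisymmetric case, cancel the $\Delta\g_\tau$ terms and complete the square into $I_N(\cdot\,\vert\,\Q_{N,\be}^\tau(\bmu^\tau))$ in the gradient case (note the square also needs the nonpositive term $-\E_{\bar f_N^\tau}[\tfrac1N\sum_i|\nabla\g_\tau\ast(\tfrac1N\sum_j\delta_{\xi_j}-\bmu^\tau)(\xi_i)|^2]$ coming from the modulated-energy side, which you did not name explicitly), and rewrite the effective velocity via the Euler--Lagrange relation \eqref{eq:bmubetaurel}. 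Up to these bookkeeping details, this is the same decomposition and the same key cancellations as in the paper.

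The one place where your reasoning is off is the stated origin of the $-\tfrac{\sf}{4}$ versus $-\tfrac{\sf}{2}$ asymmetry and of the $-\tfrac{1}{4N}\indic_{\sf=0}$ term. In the gradient case $\M=-\I$ the confining drift is \emph{not} combined with $\p_\tau\g_\tau$ via Euler's identity at all: since $\nabla\log\bmu^\tau$, $\nabla\g_\tau\ast\bmu^\tau$ and $\tfrac12\xi$ enter the effective velocity un-rotated, subtracting the gradient of \eqref{eq:bmubetaurel} removes the confinement entirely, $\bar u^\tau=\tfrac1\be\nabla\log\tfrac{\bmu^\tau}{\bmu_\be^\tau}+\nabla\g\ast(\bmu^\tau-\bmu_\be^\tau)$, and the full $-\tfrac{\sf}{2}\E_{\bar f_N^\tau}[\bar F_N^\tau]$ is purely the $\p_\tau\g_\tau$ contribution; no Euler-identity term and no $-\tfrac{1}{4N}$ appear, consistent with \eqref{eq:MFEdiss2}. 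In the antisymmetric case the relation \eqref{eq:bmubetaurel} can only be subtracted after applying $\M$ (to match $\M\nabla\log$ and $\k_\tau$); this cancels the rotated part $\tfrac12\M\xi$ (which in any case pairs to zero with $\nabla\g(\xi-\eta)$ by antisymmetry), but leaves the un-rotated $-\tfrac12\xi$ from the mean-field transport, and it is only this leftover that Euler's identity converts into $+\tfrac{\sf}{4}\E_{\bar f_N^\tau}[\bar F_N^\tau]$ (resp.\ $-\tfrac{1}{4N}$ when $\sf=0$), offsetting half of $-\tfrac{\sf}{2}$. So the halving is not caused by any ``further symmetrization by $\M$-antisymmetry''; and if you literally applied your Euler-identity mechanism in the gradient case you would end up with $-\tfrac{\sf}{4}$, a spurious $-\tfrac{1}{4N}$, and a velocity containing $\tfrac1\be\nabla\log\bmu^\tau+\nabla\g_\tau\ast\bmu^\tau$ not centered at $\bmu_\be^\tau$ --- a true identity but not \eqref{eq:MFEdiss2}, and useless downstream since $\|\nabla\bar u^\tau\|_{L^\infty}$ would no longer be controllable, which is the whole point of the construction. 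Your second and third paragraphs are thus in tension; resolving it as above gives exactly the paper's argument.
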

\begin{proof}
The calculation \eqref{eq:introdtME} from the introduction carries over to establish a dissipation inequality for $\frac{d}{d\tau}\E_{\bar{f}_N^\tau}[\bar{F}_N^\tau(\Xi_N,\bmu^\tau)]$. The vector field $u^t$ is replaced by $\bar{v}^\tau = -\k_\tau\ast\mu^\tau + \frac12\xi$, and there is only the additional term coming from when the derivative $\p_\tau$ hits  $e^{-\frac{\sf\tau}{2}}$. Ultimately, we find that
\begin{multline}\label{eq:dtbFpre}
\frac{d}{d\tau}\E_{\bar{f}_N^\tau}[\bFr^\tau(\Xi_N,\bmu^\tau)] \le \frac{1}{\be}\E_{\bar{f}_N^\tau}\Bigg[\int_{(\R^\d)^2\setminus\triangle} \D\g_\tau(\xi-\eta)d\Big(\frac1N\sum_{i=1}^N \delta_{\xi_i}-\bmu^\tau\Big)^{\otimes 2}(\xi,\eta)\Bigg]\\
+\E_{\bar{f}_N^\tau}\Bigg[\frac1N\sum_{i=1}^N \PV \ \nabla\g_\tau\ast\Big(\frac1N\sum_{i=1}^N \delta_{\xi_i}-\bmu^\tau\Big)(\xi_i) \cdot \PV \ \k_\tau\ast\Big(\frac1N\sum_{i=1}^N \delta_{\xi_i}-\bmu^\tau\Big)(\xi_i)\Bigg]  \\
- \frac12\E_{\bar{f}_N^\tau}\Bigg[\int_{(\R^\d)^2\setminus\triangle}(\bar{v}^\tau(\xi)-\bar{v}^\tau(\eta))\cdot \nabla\g(\xi-\eta)d\Big(\frac1N\sum_{i=1}^N \delta_{\xi_i}-\mu^t\Big)^{\otimes 2}(\xi,\eta)\Bigg] - \frac{\sf}{2}\E_{\bar{f}_N^\tau}[\bar{F}_N^\tau(\Xi_N,\bmu^\tau)].
\end{multline}
Combining this inequality with \eqref{eq:dtHNfin}, \eqref{eq:dtHNfin'} from \cref{lem:REdiss}, we obtain for  antisymmetric  $\M$ ,
\begin{multline}\label{eq:dtENMantispre}
\frac{d}{d\tau}\bar{E}_N^\tau(\bar{f}_N^\tau,\bmu^\tau) \le -\frac{1}{\be^2}I_N(\bar{f}_N^\tau\vert (\bmu^\tau)^{\otimes N}) + \frac{1}{\be}\E_{\bar{f}_N^\tau}\Big[\int_{(\R^\d)^2\setminus\triangle} \D\g_\tau(\xi-\eta)d\Big(\frac1N\sum_{i=1}^N \delta_{\xi_i}-\bmu^\tau\Big)^{\otimes 2}(\xi,\eta)\Big] \\
+\frac12\E_{\bar{f}_N^\tau}\Big[\int_{(\R^\d)^2\setminus\triangle}\Big(\bar{w}^\tau(\xi)-\bar{w}^\tau(\eta)\Big)\cdot \nabla\g(\xi-\eta)d\Big(\frac1N\sum_{i=1}^N \delta_{\xi_i}-\bmu^\tau\Big)^{\otimes 2}(\xi,\eta)\Big]- \frac{\sf}{2}\E_{\bar{f}_N^\tau}[\bar{F}_N^\tau(\Xi_N,\bmu^\tau)] ,
\end{multline}
where $\bar{w}^\tau \coloneqq \frac1\be\M\nabla\log{\bmu^\tau} + \k_\tau\ast\bmu^\tau - \frac12\xi$. Since $\M(\xi-\eta)\cdot(\xi-\eta)=0$, by antisymmetry of $\M$, we see from the relation \eqref{eq:bmubetaurel} that
\begin{multline}
(\bar{w}^\tau(\xi)-\bar{w}^\tau(\eta)) \cdot \nabla\g(\xi-\eta) =  -\frac12(\xi-\eta)\cdot\nabla\g(\xi-\eta) \\
+\Big[\Big(\frac1\be\M\nabla\log\frac{\bmu^\tau}{\bmu_\be^\tau} + \k_\tau\ast(\bmu^\tau-\bmu_\be^\tau)\Big)(\xi)-\Big(\frac1\be\M\nabla\log\frac{\bmu^\tau}{\bmu_\be^\tau} + \k_\tau\ast(\bmu^\tau-\bmu_\be^\tau)\Big)(\eta)\Big]\cdot\nabla\g(\xi-\eta) 
\end{multline}
Note that since {$\nabla\g(\xi-\eta) = -\frac{(\xi-\eta)}{|\xi-\eta|^{\sf+2}}$},
\begin{multline}\label{eq:confdotnabg}
-\frac12 \int_{(\R^\d)^2\setminus\triangle}\Big(\frac12\xi-\frac12\eta\Big)\cdot\nabla\g(\xi-\eta)d\Big(\frac1N\sum_{i=1}^N \delta_{\xi_i}-\mu^t\Big)^{\otimes 2}(\xi,\eta) - \frac{\sf}{2}\E_{\bar{f}_N^\tau}[\bar{F}_N^\tau(\Xi_N,\bmu^\tau)]\\
=\begin{cases}-\frac{\sf}{4}\E_{\bar{f}_N^\tau}[\bar{F}_N^\tau(\Xi_N,\bmu^\tau)], & {\sf>0} \\ -\frac{1}{4N}, & {\sf=0}.  \end{cases}
\end{multline}
where to obtain the $\sf=0$ result, we have used that $\bmu^\tau$ is a probability measure and that there are ${N(N-1)}$ terms in $\int_{(\R^\d)^2\setminus\triangle}d\Big(\sum_{i=1}^N \delta_{\xi_i}\Big)^{\otimes 2}$. Inserting \eqref{eq:confdotnabg} into \eqref{eq:dtENMantispre}, we arrive at \eqref{eq:MFEdiss1}.

In the case $\M=-\I$, we again combine \eqref{eq:dtbFpre} with \eqref{eq:dtHNfin}, \eqref{eq:dtHNfin'} from \cref{lem:REdiss} and observe, since $\k_\tau = -\nabla\g_\tau$, the cancellation of the $\D\g_\tau$ terms to obtain
\begin{multline}\label{eq:dtENsympre}
\frac{d}{d\tau}\bar{E}_N^\tau(\bar{f}_N^\tau,\bmu^\tau) \le -\frac{1}{\be^2}I_N(\bar{f}_N^\tau\vert (\bmu^\tau)^{\otimes N}) - \frac{\sf}{2}\E_{\bar{f}_N^\tau}[\bar{F}_N^\tau(\Xi_N,\bmu^\tau)]\\
- \frac{2}{\be}\E_{\bar{f}_N^\tau}\Bigg[{\frac1N}\sum_{i=1}^N \nabla\g_\tau\ast \Big(\frac1N\sum_{j=1}^N \delta_{\xi_j} - \bmu^\tau\Big)(\xi_i) \cdot \nabla_{\xi_i}\log\left(\frac{\bar{f}_N^{\tau}}{(\bmu^{\tau})^{\otimes N}}\right)\Bigg] \\
-\E_{\bar{f}_N^\tau}\Bigg[\frac1N\sum_{i=1}^N \PV \Big|\nabla\g_\tau\ast\Big(\frac1N\sum_{i=1}^N \delta_{\xi_i}-\bmu^\tau\Big)(\xi_i)\Big|^2\Bigg] \\
- \frac12\E_{\bar{f}_N^\tau}\Bigg[\int_{(\R^\d)^2\setminus\triangle}(\bar{w}^\tau(\xi)-\bar{w}^\tau(\eta))\cdot \nabla\g_\tau(\xi-\eta)d\Big(\frac1N\sum_{i=1}^N \delta_{\xi_i}-\mu^t\Big)^{\otimes 2}(\xi,\eta)\Bigg] ,
\end{multline}
where $\bar{w}^\tau \coloneqq \frac1\be\log\bmu^\tau+ \nabla\g\ast\bmu^\tau + \frac12\xi$. We may recombine terms according to
\begin{multline}\label{eq:recombMGM}
-\frac{1}{\be^2}I_N(\bar{f}_N^\tau\vert (\bmu^\tau)^{\otimes N})  -\E_{\bar{f}_N^\tau}\Bigg[\frac1N\sum_{i=1}^N \PV \Big|\nabla\g_\tau\ast\Big(\frac1N\sum_{i=1}^N \delta_{\xi_i}-\bmu^\tau\Big)\Big|^2(\xi_i)\Bigg]\\
- \frac{2}{\be}\E_{\bar{f}_N^\tau}\Bigg[{\frac1N}\sum_{i=1}^N \g_\tau\ast \Big(\frac1N\sum_{j=1}^N \delta_{\xi_j} - \bmu^\tau\Big)(\xi_i) \cdot \nabla_{\xi_i}\log\left(\frac{\bar{f}_N^{\tau}}{(\bmu^{\tau})^{\otimes N}}\right)\Bigg] \\
=-\frac{1}{\be^2}\E_{\bar{f}_N^\tau}\Bigg[{\frac1N\sum_{i=1}^N}\Big|\nabla_{\xi_i}\log\left(\frac{\bar{f}_N^{\tau}}{(\bmu^{\tau})^{\otimes N}}\right) +\frac{\be}{N}\sum_{j\ne i}\nabla\g_\tau(\xi_i-\xi_j) - \be\nabla\g_\tau\ast\bmu^\tau(\xi_i)\Big|^2\Bigg] =-\frac{1}{\be^2}I_N(\bar{f}_N^\tau \vert \mathbb{Q}_{N,\be}^{\tau}(\bmu^\tau)),
\end{multline}
where $\mathbb{Q}_{N,\be}^{\tau}(\bmu^\tau)$ is the {modulated Gibbs measure \eqref{eq:mGM}.}
%\begin{align}
%\mathbb{Q}_{N,\be}^{\tau}(\bmu^\tau) \coloneqq \frac{1}{\mathsf{K}_{N,\be}^\tau(\bmu^\tau)}e^{-\be N\bar{F}_N^\tau(\Xi_N,\bmu^\tau)}d(\bmu^\tau)^{\otimes N}(\Xi_N).
%\end{align}
Instead of separating out the contribution of the confining potential as in \eqref{eq:confdotnabg}, we keep it packaged in the vector field $\bar{w}^\tau$. Since $\nabla(\frac1\be\log\bmu_\be^\tau + \frac14|\xi|^2+ \g\ast\bmu_\be^\tau) = 0$ by the relation \eqref{eq:bmubetaurel}, the vector field $\bar{w}^\tau$ in fact satisfies
\begin{align}\label{eq:bwbusym}
\bar{w}^\tau = \frac1\be\nabla\log\frac{\bmu^\tau}{\bmu_\be^\tau} + \nabla\g\ast(\bmu^\tau-\bmu_\be^\tau) \eqqcolon \bar{u}^\tau.
\end{align}
Inserting \eqref{eq:recombMGM} and \eqref{eq:bwbusym} into the right-hand side of \eqref{eq:dtENsympre} finally yields the desired \eqref{eq:MFEdiss2}.
\end{proof}

%\begin{multline}\label{eq:dtauEc}
%\frac{d}{d\tau}\bEc^\tau(\bar{f}_N^\tau , \bmu^\tau) \le -\frac{1}{\be^2}I_N(\bar{f}_N^\tau \vert \Q_{N,\be}^\tau(\bmu^\tau))- \frac{\sf}{2}\E_{\bar{f}_N^\tau}\Big[\bFr^\tau(\Xi_N,\bmu^\tau)\Big] + \frac{d}{d\tau}\Big[\frac{\log(N\|\bmu^\tau\|_{L^\infty})}{2\d N}\indic_{\sf=0} + \mathsf{C}e^{-\frac{\sf\tau}{2}}\|\bmu^\tau\|_{L^\infty}^{\frac{\sf}{\d}}N^{\frac{\sf}{\d}-1} \Big] \\
%-\frac12\E_{\bar{f}_N^\tau}\Big[\int_{(\R^\d)^2\setminus\triangle}(u^\tau(\xi)-u^\tau(\eta))\cdot\nabla\g_\tau(\xi-\eta)d\Big(\frac1N\sum_{i=1}^N \delta_{\xi_i}-\bmu^\tau\Big)^{\otimes 2}(\xi,\eta)\Big],
%\end{multline}
%where $\bar{u}^\tau \coloneqq \frac1\be\log\bmu^\tau + \frac14|\xi|^2 + \g\ast\bmu_\be$ is the vector field associated to the mean-field dynamics, 

We now close the Gr\"onwall loop for the modulated free energy if $\M$ is antisymmetric and $\sf<\d-2$ or $\M=-\I$ and $0\leq \sf < \d$. As commented in the introduction, $E_N$ does not have a sign, but, using \cref{lem:MEpos}, we can add a correction to it to obtain a nonnegative quantity which is more suited to establishing a Gr\"onwall relation. \cref{prop:MFEgron1} below is a warm-up to the subsequent \cref{prop:MFEgron2}, which is the source of the modulated free energy estimates of \cref{thm:main}.

Before stating \cref{prop:MFEgron1}, we record an important remark that will be used in its proof.

\begin{remark}\label{rem:mon}
If $\mu^t$ is a solution, then for any $1\le p\le \infty$, the quantity $\|\mu^t\|_{L^p}$ is conserved if $\M$ is antisymmetric \cite{CL1995} and decreasing if $\M=-\I$ \cite{BIK2015, RS2021}. Recalling \eqref{eq:bmudef}, this implies that $e^{-\frac{\d\tau}{2}(1-\frac1p)}\|\bmu^\tau\|_{L^p}$ is always decreasing. In particular, $e^{-\frac{\sf\tau}{2}}\|\bmu^\tau\|_{L^\infty}^{\frac{\sf}{\d}}$ and $\log(\|\bmu^\tau\|_{L^\infty}) - \frac{\d\tau}{2}$ are decreasing.
\end{remark}

\begin{prop}\label{prop:MFEgron1}
Define the exponents
\begin{align}\label{eq:al12def}
\al_1 \coloneqq \frac{2(\d-\sf)}{\sf(\d+2) + 2(\d-s)}, \qquad \al_2 \coloneqq \frac{2(\d-\sf-2)}{(\sf+2)(\d+2)+2(\d-\sf-2)}.
\end{align}
For $C_0>0$ is sufficiently large depending only on $\d,\sf$, define
\begin{align}
\bar{\Ec}_N(\bar{f}_N^\tau,\bmu^\tau) \coloneqq \bar{E}_N^\tau(\bar{f}_N^\tau,\bmu^\tau) + \bar{\os}_N^\tau(1), \\
\Ec_N(f_N^t,\mu^t) \coloneqq E_N(f_N^t,\mu^t) + \os_N^t(1),
\end{align}
where\footnote{The choice of $\sup_{\tau'\ge 0}|\log\|\bmu^{\tau'}\|_{L^\infty}|$ is for technical reasons, to deal with the fact that the log changes signs, so $|\log\|\bmu^\tau\|_{L^\infty}|$ could increase by either $\|\bmu^\tau\|_{L^\infty}$ decreasing or increasing.}
\begin{multline}\label{eq:bosNdef}
\bar{\os}_N^\tau(1) \coloneqq \frac{\log(N\|\bmu^\tau\|_{L^\infty})}{2N\d}\indic_{\sf=0} \\
 + \begin{cases}   C_0 e^{-\frac{\sf\tau}{2}}\|\bmu^\tau\|_{L^\infty}^{\frac{\sf}{\d}}N^{\frac{\sf}{\d}-1}, & {\sf\ge \d-2} \\ \\  C_0\frac{(\log N + \sup_{\tau'\ge 0}|\log\|\bmu^{\tau'}\|_{L^\infty}|)}{N}\indic_{\sf=0} + C_0 e^{-\frac{\sf\tau}{2}}\|\bmu^\tau\|_{L^\infty}^{\frac{\sf}{\d}}N^{-\frac{\al_1}{1+\sf}}, & {\sf<\d-2}, \end{cases}
\end{multline}
\begin{multline}\label{eq:osNdef}
\os_N^t(1) \coloneqq \frac{\log(N\|\mu^t\|_{L^\infty})}{2N\d}\indic_{\sf=0} \\
+ \begin{cases}   C_0 \|\mu^t\|_{L^\infty}^{\frac{\sf}{\d}}N^{\frac{\sf}{\d}-1}, & {\sf\ge\d-2} \\ \\  \frac{C_0(\log N + \sup_{t\ge 0} |\log((t+1)^{\frac{\d}{2}}\|\mu^t\|_{L^\infty})|)}{N}\indic_{\sf=0} + C_0\|\mu^t\|_{L^\infty}^{\frac{\sf}{\d}}N^{-\frac{\al_1}{1+\sf}}, & {\sf<\d-2}. \end{cases}
\end{multline} 
%if  $\sf\geq\d-2$, and
%\begin{multline}
%\bar{\os}_N^\tau(1) \coloneqq C_0\frac{(\log N + |\log\|\bmu^\tau\|_{L^\infty}|)}{N}\indic_{\sf=0} + C_0 e^{-\frac{\sf\tau}{2}}\|\bmu^\tau\|_{L^\infty}^{\frac{\sf}{\d}}N^{-\frac{\frac{(2(\d-\sf)}{\d+2}}{\left(\sf+\frac{(2(\d-\sf)}{\d+2}\right)(1+\sf)}} \\
%+ {C}_0(\sf + \indic_{\sf=0})\frac{e^{-\sf\tau/2}}{\be}\indic_{\M \ \text{antisym}}\begin{cases} \displaystyle  \|\bmu^\tau\|_{L^\infty}^{\frac{\sf+2}{\d}}N^{\frac{\sf+2}{\d}-1}, & {\d-2>\sf\geq\d-4} \\ \\ \displaystyle  \|\bmu^\tau\|_{L^\infty}^{\frac{\sf+2}{\d}} N^{-\frac{2(\d-\sf-2)}{2(\d-\sf-2)+(\sf+2)(\d+2)}}, &{\sf<\d-4},\end{cases}
%\end{multline}
%if $\sf<\d-2$; and define
%\begin{align}
%{\Ec}_N(f_N^t,\mu^t) \coloneqq {E}_N({f}_N^t,\mu^t) + \os_N^t(1),
%\end{align}
%where 
%\begin{multline}
%{\os}_N^t(1) \coloneqq \frac{\log(N\|\mu^t\|_{L^\infty})}{2N\d}\indic_{\sf=0} + C_0 \|\mu^t\|_{L^\infty}^{\frac{\sf}{\d}}N^{\frac{\sf}{\d}-1} \\
%\end{multline}
%if  $\sf\geq\d-2$, and
%\begin{multline}
%{\os}_N^\tau(1) \coloneqq C_0\frac{(\log N + |\log\|\mu^t\|_{L^\infty}|)}{N}\indic_{\sf=0} + C_0|\mu^t\|_{L^\infty}^{\frac{\sf}{\d}}N^{-\frac{\frac{(2(\d-\sf)}{\d+2}}{\left(\sf+\frac{(2(\d-\sf)}{\d+2}\right)(1+\sf)}} \\
%+ {C}_0(\sf + \indic_{\sf=0})\frac{e^{-\sf\tau/2}}{\be}\indic_{\M \ \text{antisym}}\begin{cases} \displaystyle  \|\bmu^\tau\|_{L^\infty}^{\frac{\sf+2}{\d}}N^{\frac{\sf+2}{\d}-1}, & {\d-2>\sf\geq\d-4} \\ \\ \displaystyle  \|\bmu^\tau\|_{L^\infty}^{\frac{\sf+2}{\d}} N^{-\frac{2(\d-\sf-2)}{2(\d-\sf-2)+(\sf+2)(\d+2)}}, &{\sf<\d-4},\end{cases}
%\end{multline}
%if $\sf<\d-2$. 

If $\M$ is antisymmetric and $\sf<\d-2$, then for $\tau\ge 0$,
\begin{multline}\label{eq:MFEgron11}
\bEc^\tau(\bar{f}_N^\tau , \bmu^\tau) \le e^{C\int_0^\tau \|\bar{u}^{\tau'}\|_*d\tau'}\Bigg[\bEc^0(\bar{f}_N^0 , \bmu^0) +C[1-e^{-\frac{\sf\tau}{2}}]\sup_{[0,\tau]}\|\bmu^{\tau'}\|_{L^\infty}^{\frac{\sf}{\d}} N^{-\al_1}\\
+ \frac{{C}[1-e^{-\frac{\sf\tau}{2}}]}{\be \sf} \sup_{[0,\tau]}\|\bmu^{\tau'}\|_{L^\infty}^{\frac{\sf+2}{\d}}\Big(N^{\frac{\sf+2}{\d}-1}\indic_{\d-4\le \sf <\d-2} + N^{-\al_2}\indic_{\sf<\d-4}\Big)\Bigg]
\end{multline}
and for $t\ge 0$,
\begin{multline}\label{eq:MFEgron11'}
\Ec_N(f_N^t,\mu^t) \le e^{C\int_0^{\log(t+1)} \|\bar{u}^{\tau'}\|_*d\tau'}\Bigg[\Ec_N(f_N^0,\mu^0) + C[1-(t+1)^{-\frac{\sf}{2}}] \sup_{[0,\log(t+1)]} \|\bmu^{\tau'}\|_{L^\infty}^{\frac{\sf}{\d}}N^{-\al_1} \\
+\frac{{C}[1-(t+1)^{-\frac{\sf}{2}}]}{\be\sf}\sup_{[0,\log(t+1)]} \|\bmu^{\tau'}\|_{L^\infty}^{\frac{\sf+2}{\d}}\Big(N^{\frac{\sf+2}{\d}-1}\indic_{\d-4\le \sf <\d-2} + N^{-\al_2}\indic_{\sf<\d-4}\Big)\Bigg],
\end{multline}
where $C>0$ depends only on $\d,\sf$ and we adopt the conventions  $\frac{2[1-e^{-\frac{\sf\tau}{2}}]}{\sf} \coloneqq \tau$, $\frac{2[1-(t+1)^{-\frac{\sf}{2}}]}{\sf}\coloneqq t$ for $\sf=0$. If $\M=-\I$, then for $\tau\ge 0$,
\begin{multline}\label{eq:MFEgron12}
\bEc^\tau(\bar{f}_N^\tau , \bmu^\tau)  \le e^{C\int_0^\tau \|\bar{u}^{\tau'}\|_{*}d\tau'}\Bigg[\bEc^0(\bar{f}_N^0 , \bmu^0) +\frac{1}{4N}\indic_{\sf=0}\\
 + C[1-e^{-\frac{\sf\tau}{2}}]\sup_{[0,\tau]}\|\bmu^{\tau'}\|_{L^\infty}^{\frac{\sf}{\d}}\Big(N^{\frac{\sf}{\d}-1}\indic_{\sf\ge\d-2} +  N^{-\al_1}\indic_{\sf<\d-2}\Big)\Bigg]
\end{multline}
and for $t\ge 0$,
\begin{multline}\label{eq:MFEgron12'}
\Ec_N(f_N^t,\mu^t) \le e^{C\int_0^{\log(t+1)} \|\bar{u}^{\tau'}\|_*d\tau'}\Bigg[\Ec_N(f_N^0,\mu^0)+\frac{1}{4N}\indic_{\sf=0} \\+ C[1-(t+1)^{-\frac{\sf}{2}}] \sup_{[0,\log(t+1)]} \|\bmu^{\tau'}\|_{L^\infty}^{\frac{\sf}{\d}}\Big(N^{\frac{\sf}{\d}-1}\indic_{\sf\ge\d-2} +  N^{-\al_1}\indic_{\sf<\d-2}\Big)\Bigg].
\end{multline}

\end{prop}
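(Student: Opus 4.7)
The plan is to apply the dissipation identities of Lemma~\ref{lem:MFEdiss}, control the non-dissipative terms on the right-hand side by the modulated free energy itself (modulo $o_N(1)$ errors comparable to $\bar{o}_N^\tau(1)$), and then close a Gr\"onwall loop after adding the correction $\bar{o}_N^\tau(1)$ to ensure nonnegativity.

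First, I would bound the commutator terms in \eqref{eq:MFEdiss1} and \eqref{eq:MFEdiss2} via Lemma~\ref{lem:MEcomm} applied with $v=\bar{u}^\tau$ and the renormalized kernel $\nabla\g_\tau = e^{-\sf\tau/2}\nabla\g$. This yields a bound of the form $C\|\bar{u}^\tau\|_*\big(\bar{F}_N^\tau(\Xi_N,\bmu^\tau) + \bar{o}_N^\tau(1)\big)$, where the $o_N(1)$ error inherits a factor $e^{-\sf\tau/2}$ from the scaling of $\g_\tau$, matching the $\tau$-dependence built into \eqref{eq:bosNdef}. In the antisymmetric case, the residual Laplacian term $\tfrac{1}{\be}\E_{\bar{f}_N^\tau}[\int \Delta\g_\tau(\xi-\eta)\, d(\cdot)^{\otimes 2}]$ from \eqref{eq:MFEdiss1} requires separate treatment. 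Since $\Delta\g_\tau(x)=-(\d-\sf-2)\,e^{-\sf\tau/2}|x|^{-(\sf+2)}$ and $\sf+2<\d$ under the standing hypothesis $\sf<\d-2$, this equals a negative multiple of a modulated energy with Riesz potential $|x|^{-(\sf+2)}$. Applying Lemma~\ref{lem:MEpos} with exponent $\sf+2$ in place of $\sf$ bounds it above by
\begin{equation*}
\tfrac{C}{\be\sf}\, e^{-\sf\tau/2}\|\bmu^\tau\|_{L^\infty}^{\frac{\sf+2}{\d}}\Big(N^{\frac{\sf+2}{\d}-1}\indic_{\d-4\le\sf<\d-2}+N^{-\al_2}\indic_{\sf<\d-4}\Big),
\end{equation*}
which accounts for the second line on the right of \eqref{eq:MFEgron11}.

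Next, I would discard the nonpositive Fisher information contributions---noting that for the $\M=-\I$ case this uses the Fisher information relative to the modulated Gibbs measure $\Q_{N,\be}^\tau(\bmu^\tau)$, which is the crucial rearrangement exhibited in \eqref{eq:MFEdiss2}---and add $\bar{o}_N^\tau(1)$ to both sides to pass to the nonnegative quantity $\bar{\Ec}_N^\tau$; this uses Lemma~\ref{lem:MEpos} with $C_0$ chosen large enough relative to the absolute constant $\mathsf{C}$ there. The time derivative of $\bar{o}_N^\tau(1)$ itself is well-behaved: by Remark~\ref{rem:mon}, the quantities $e^{-\sf\tau/2}\|\bmu^\tau\|_{L^\infty}^{\sf/\d}$ and $\log(N\|\bmu^\tau\|_{L^\infty})-\tfrac{\d\tau}{2}\indic_{\sf=0}$ are monotone decreasing, so $\tfrac{d}{d\tau}\bar{o}_N^\tau(1)$ contributes only nonpositive error that is absorbed. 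Altogether, one obtains a schematic inequality
\begin{equation*}
\tfrac{d}{d\tau}\bar{\Ec}_N^\tau(\bar{f}_N^\tau,\bmu^\tau)\le C\|\bar{u}^\tau\|_*\,\bar{\Ec}_N^\tau(\bar{f}_N^\tau,\bmu^\tau) + e^{-\sf\tau/2}\cdot(\text{error terms as in }\eqref{eq:MFEgron11}),
\end{equation*}
and Gr\"onwall yields \eqref{eq:MFEgron11} and \eqref{eq:MFEgron12}: the prefactors $\tfrac{2[1-e^{-\sf\tau/2}]}{\sf}$ on the error terms emerge from time integration of the exponentially decaying source, with the convention recovering the linear-in-$\tau$ growth when $\sf=0$. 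The original-coordinate estimates \eqref{eq:MFEgron11'} and \eqref{eq:MFEgron12'} then follow from Lemma~\ref{lem:MFEss}, matching $\mathsf{C}_1,\mathsf{C}_2,\ga,\la$ of \eqref{eq:EcbEc} to those of \eqref{eq:osNdef} so that $\bar{\Ec}_N^\tau(\bar{f}_N^\tau,\bmu^\tau)=\Ec_N(f_N^t,\mu^t)$ under $\tau=\log(t+1)$.

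The main obstacle, in my view, is the bookkeeping around the correction $\bar{o}_N^\tau(1)$: one must verify simultaneously that (i) it ensures $\bar{\Ec}_N^\tau\ge 0$ uniformly in $\tau$ via Lemma~\ref{lem:MEpos}, (ii) its time derivative has the right sign by Remark~\ref{rem:mon}, and (iii) its $\tau$-scaling is matched to the errors produced by the commutator and Laplacian estimates. This careful matching is what allows the Gr\"onwall-integrated bound to feature only genuine $o_N(1)$ errors (with the sharp exponents $\al_1,\al_2$) rather than divergent ones, and is what unifies the $\sf=0$, $\sf\ge\d-2$, and $\sf<\d-2$ regimes into a single uniform statement.
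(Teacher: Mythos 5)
Your proposal follows essentially the same route as the paper: start from the dissipation inequalities of \cref{lem:MFEdiss}, bound the commutator via \cref{lem:MEcomm} by $C\|\bar u^\tau\|_*\bar\Ec_N^\tau$, bound the $\Delta\g_\tau$ term via \cref{lem:MEpos} applied to the kernel $|x|^{-\sf-2}$, discard the Fisher information, use \cref{rem:mon} to control $\frac{d}{d\tau}\bar\os_N^\tau$, integrate by Gr\"onwall, and revert coordinates via \cref{lem:MFEss}; this is exactly the paper's proof.

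Two bookkeeping points should be repaired, though neither changes the approach. First, you never account for the term $-\frac{\sf}{4}\E_{\bar f_N^\tau}[\bar F_N^\tau(\Xi_N,\bmu^\tau)]$ (resp.\ $-\frac{\sf}{2}\E_{\bar f_N^\tau}[\bar F_N^\tau]$ when $\M=-\I$) appearing in \eqref{eq:MFEdiss1}--\eqref{eq:MFEdiss2}; it is not sign-definite, and bounding it above by \cref{lem:MEpos} (or by $\frac{\sf}{4}\bar\os_N^\tau$ using $\E[\bar F_N^\tau]+\bar\os_N^\tau\ge 0$) is precisely the source of the first error term $C[1-e^{-\frac{\sf\tau}{2}}]\sup\|\bmu^{\tau'}\|_{L^\infty}^{\sf/\d}N^{-\al_1}$ in \eqref{eq:MFEgron11} and \eqref{eq:MFEgron12}, which your sketch attributes to nothing in particular. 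Second, your claim that $\frac{d}{d\tau}\bar\os_N^\tau(1)$ is nonpositive is not quite right for the $\sf=0$ logarithmic piece: \cref{rem:mon} only yields $\frac{d}{d\tau}\bar\os_N^\tau\le\frac{1}{4N}\indic_{\sf=0}$, since $\bar\os_N^\tau$ contains $\frac{\log(N\|\bmu^\tau\|_{L^\infty})}{2N\d}$ without the $-\frac{\d\tau}{2}$ correction. In the antisymmetric case this $\frac{1}{4N}$ is cancelled by the $-\frac{1}{4N}\indic_{\sf=0}$ already present in \eqref{eq:MFEdiss1}, while in the gradient case it survives and is exactly the $\frac{1}{4N}\indic_{\sf=0}$ term in \eqref{eq:MFEgron12}, which your schematic differential inequality (with a purely nonpositive contribution from $\bar\os_N^\tau$) would fail to produce. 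With these two items inserted, your argument coincides with the paper's.
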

\begin{proof}
To lighten the notation, we abbreviate $\bar{\os}_N^\tau(1)=\bar{\os}_N^\tau$. We start with the case of antisymmetric $\M$, for which we restrict to $\sf<\d-2$. Using \eqref{eq:MFEdiss1} from \cref{lem:MFEdiss}, we see that
\begin{multline}\label{eq:MFEantispre}
\frac{d}{d\tau}\bEc^\tau(\bar{f}_N^\tau , \bmu^\tau) \le -\frac{1}{\be^2}I_N(\bar{f}_N^\tau\vert (\bmu^\tau)^{\otimes N}) + \frac{1}{\be}\E_{\bar{f}_N^\tau}\Bigg[\int_{(\R^\d)^2\setminus\triangle} \D\g_\tau(\xi-\eta)d\Big(\frac1N\sum_{i=1}^N \delta_{\xi_i}-\bmu^\tau\Big)^{\otimes 2}(\xi,\eta)\Bigg]\\
+\frac12\E_{\bar{f}_N^\tau}\Bigg[\int_{(\R^\d)^2\setminus\triangle}\Big(\bar{u}^\tau(\xi)-\bar{u}^\tau(\eta)\Big)\cdot \nabla\g(\xi-\eta)d\Big(\frac1N\sum_{i=1}^N \delta_{\xi_i}-\bmu^\tau\Big)^{\otimes 2}(\xi,\eta)\Bigg] \\
-\frac{\sf}{4}\E_{\bar{f}_N^\tau}[\bar{F}_N^\tau(\Xi_N,\bmu^\tau)] -\frac{1}{4N}\indic_{\sf=0}+\frac{d}{d\tau}(\bar{\os}_N^\tau),
\end{multline}
where $\bar{u}^\tau = \M\nabla\log\frac{\bmu^\tau}{\bmu_\be^\tau} + \k_\tau\ast(\bmu^\tau - \bmu_\be^\tau)$. We discard the relative Fisher information term. By \cref{rem:mon},
\begin{align}
\frac{d}{d\tau}(\bar{\os}_N^\tau)\le \frac{1}{4N}\indic_{\sf=0}.
\end{align}
Since $\D\g = {-(\sf+2)(\d-\sf-2)\tl{\g}}$, where $\tl{\g}(\xi) = {\frac{1}{\sf+2}}|\xi|^{-\sf-2}$, we may apply \cref{lem:MEpos} to estimate
\begin{multline}\label{eq:MFEDeltagterm}
\frac{1}{\be}\E_{\bar{f}_N^\tau}\Bigg[\int_{(\R^\d)^2\setminus\triangle} \D\g_\tau(\xi-\eta)d\Big(\frac1N\sum_{i=1}^N \delta_{\xi_i}-\bmu^\tau\Big)^{\otimes 2}(\xi,\eta)\Bigg] \\
\leq \mathsf{C}{(\sf + 2)}(\d-\sf-2)\frac{e^{-\sf\tau/2}}{\be} \|\bmu^\tau\|_{L^\infty}^{\frac{\sf+2}{\d}}\begin{cases}  N^{\frac{\sf+2}{\d}-1}, & {\d-2>\sf\geq\d-4} \\     N^{-\al_2}, &{\sf<\d-4}.\end{cases}
\end{multline}
Similarly, using \cref{lem:MEpos} again, we have
\begin{align}\label{eq:MFEMEterm}
-\frac{\sf}{4}\E_{\bar{f}_N^\tau}\Big[\bFr^\tau(\Xi,\bmu^\tau)] \le \frac{\sf e^{-\sf\tau/2}}{4}\Cs\|\bmu^\tau\|_{L^\infty}^{\frac{\sf}{\d}}\begin{cases} N^{\frac{\sf}{\d}-1}, & {\sf\geq\d-2} \\  N^{-\al_1}, & {\sf<\d-2}.\end{cases}
\end{align}
Finally, we handle the commutator term by applying, pointwise in $\tau$ and $\Xi_N$, \cref{lem:MEcomm}, then taking $\E_{\bar{f}_N^\tau}[\cdot]$ of the resulting expression, leading to 
\begin{align}
&\E_{\bar{f}_N^\tau}\Bigg[\Big|\int_{(\R^\d)^2\setminus\triangle}(u^\tau(\xi)-u^\tau(\eta))\cdot\nabla\g_\tau(\xi-\eta)d\Big(\frac1N\sum_{i=1}^N \delta_{\xi_i}-\bmu^\tau\Big)^{\otimes 2}(\xi,\eta)\Big|\Bigg] \nn\\
&\le C\|\bar{u}^\tau\|_{*}\Bigg(\E_{\bar{f}_N^\tau}\Big[\bFr^\tau(\Xi_N,\bmu^\tau)\Big] +\bar{\os}_N^\tau\Bigg) \nn\\
&\le C\|\bar{u}^\tau\|_{*}\bEc^\tau(\bar{f}_N^\tau,\bmu^\tau),\label{eq:MFECommterm}
\end{align}
where the final line follows from the nonnegativity of the relative entropy.

Combining \eqref{eq:MFEDeltagterm}, \eqref{eq:MFEMEterm}, \eqref{eq:MFECommterm}, we arrive at the differential inequality
%Since $\g$ is repulsive, $-\E_{\bar{f}_N^\tau}\Big[\Fr_N(\Xi,\bmu^\tau)]$ is nonpositive up an error $N^{\frac{\sf}{\d}-1}$.
\begin{multline}
\frac{d}{d\tau}\bEc^\tau(\bar{f}_N^\tau , \bmu^\tau) \le C\|\bar{u}^\tau\|_{*}\bEc^\tau(\bar{f}_N^\tau, \bmu^\tau)+\frac{\Cs\sf e^{-\sf\tau/2}}{4} \|\bmu^\tau\|_{L^\infty}^{\frac{\sf}{\d}}  N^{-\al_1} \\
+ \mathsf{C}{(\sf+2)}(\d-\sf-2)\frac{e^{-\sf\tau/2}}{\be}\|\bmu^\tau\|_{L^\infty}^{\frac{\sf+2}{\d}}\begin{cases}   N^{\frac{\sf+2}{\d}-1}, & {\d-2>\sf\geq\d-4} \\    N^{-\al_2}, &{\sf<\d-4}.\end{cases}
\end{multline}
Integrating the preceding differential inequality, we find
\begin{multline}
\bEc^\tau(\bar{f}_N^\tau , \bmu^\tau) \le e^{C\int_0^\tau \|\bar{u}^{\tau'}\|_*d\tau'}\bEc^0(\bar{f}_N^0 , \bmu^0) +\frac{\sf\Cs}{4} N^{-\al_1} \int_0^\tau  \exp\Big({C\int_{\tau'}^\tau \|\bar{u}^{\tau''}\|_*d\tau'' -\frac{\sf\tau'}{2}}\Big) \|\bmu^\tau\|_{L^\infty}^{\frac{\sf}{\d}} d\tau' \\
+ \frac{\Cs{(\sf+2)}(\d-\sf-2)}{\be}\int_0^\tau \exp\Big({C\int_{\tau'}^\tau \|\bar{u}^{\tau''}\|_*d\tau'' -\frac{\sf\tau'}{2}}\Big)\|\bmu^{\tau'}\|_{L^\infty}^{\frac{\sf+2}{\d}}\Big(N^{\frac{\sf+2}{\d}-1}\indic_{\d-4\le \sf <\d-2} + N^{-\al_2}\indic_{\sf<\d-4}\Big)d\tau'.
\end{multline}
%Integrating by parts,
%\begin{multline}\label{eq:dtau'oIBP}
%\int_0^\tau e^{C\int_{\tau'}^\tau \|\bar{u}^{\tau''}\|_{*}d\tau''}\frac{d}{d\tau'}(\bar{o}_N^{\tau'}) d\tau' =\bar{\os}_N^{\tau} - e^{C\int_{0}^\tau \|\bar{u}^{\tau''}\|_{*}d\tau''}\bar{\os}_N^{0}\\
% + C\int_0^\tau \|\bar{u}^{\tau'}\|_{L^\infty}e^{C\int_{\tau'}^\tau \|\bar{u}^{\tau''}\|_{L^\infty}d\tau''} (\bar{\os}_N^{\tau'}) d\tau'.
%\end{multline}
For $\tau'\in[0,\tau]$, we majorize $\int_{\tau'}^{\tau} \|\bar{u}^{\tau''}\|_{*}d\tau'' \le \int_0^\tau  \|\bar{u}^{\tau''}\|_{*}d\tau'' $ and $\|\bmu^{\tau'}\|_{L^\infty} \le \sup_{[0,\tau]}\|\bmu^{\tau'}\|_{L^\infty}$, then evaluate the resulting integrals to obtain
\begin{multline}
\bEc^\tau(\bar{f}_N^\tau , \bmu^\tau) \le e^{C\int_0^\tau \|\bar{u}^{\tau'}\|_*d\tau'}\bEc^0(\bar{f}_N^0 , \bmu^0) +\frac{\Cs[1-e^{-\frac{\sf\tau}{2}}]}{2}e^{C\int_{0}^\tau \|\bar{u}^{\tau''}\|_*d\tau''}\sup_{[0,\tau]}\|\bmu^{\tau'}\|_{L^\infty}^{\frac{\sf}{\d}} N^{-\al_1}\\
+ \frac{2\Cs{(\sf+2)}(\d-\sf-2)[1-e^{-\frac{\sf\tau}{2}}]}{\be \sf}e^{C\int_{0}^\tau \|\bar{u}^{\tau''}\|_*d\tau''} \sup_{[0,\tau]}\|\bmu^{\tau'}\|_{L^\infty}^{\frac{\sf+2}{\d}}\Big(N^{\frac{\sf+2}{\d}-1}\indic_{\d-4\le \sf <\d-2} + N^{-\al_2}\indic_{\sf<\d-4}\Big).
\end{multline}
Above, we adopt the convention that $\frac{2(1-e^{-\frac{\sf\tau}{2}})}{\sf} \coloneqq \tau$ for $\sf=0$. Relabeling constants gives \eqref{eq:MFEgron11}.

Recalling \eqref{eq:bmudef} and \eqref{eq:MFEss3} from \cref{lem:MFEss}, we revert back to the original $(t,x)$ coordinates. For $\tau=\log(t+1)$, we have
\begin{align}
\bar{\Ec}_N^{\tau}(\bar{f}_N^\tau,\bmu^\tau) &= \bar{E}_N^\tau(\bar{f}_N^\tau, \bmu^\tau) + \frac{\log(N\|\bmu^\tau\|_{L^\infty})}{2\d N}\indic_{\sf=0}\nn\\
&\ph+ \frac{C_0(\log N + \sup_{\tau\ge 0} |\log\|\bmu^\tau\|_{L^\infty}|)}{N}\indic_{\sf=0} + C_0e^{-\frac{\sf\tau}{2}}\|\bmu^\tau\|_{L^\infty}^{\frac{\sf}{\d}}N^{-\frac{\al_1}{1+\sf}}
\nn\\
&= E_N(f_N^t,\mu^t)  + \frac{\log(N\|\mu^t\|_{L^\infty})}{2\d N}\nn\\
&\ph + \frac{C_0(\log N + \sup_{t\ge 0} |\log((t+1)^{\frac{\d}{2}}\|\mu^t\|_{L^\infty})|)}{N}\indic_{\sf=0} + C_0\|\mu^t\|_{L^\infty}^{\frac{\sf}{\d}}N^{-\frac{\al_1}{1+\sf}} \nn\\
&=\Ec_N(f_N^t,\mu^t). \label{eq:bEcNEcN}
\end{align}
Observing that $\bEc^0(\bar{f}_N^0,\bmu^0) = \Ec_N(f_N^0,\mu^0)$ finally yields \eqref{eq:MFEgron11'}.
%\begin{multline}
%\bar{\Ec}_N^{\tau}(\bar{f}_N^\tau,\bmu^\tau) = \bar{E}_N^\tau(\bar{f}_N^\tau, \bmu^\tau) + \frac{C_0(\log N + \sup_{t\ge 0} |\log((t+1)^{\frac{\d}{2}}\|\mu^t\|_{L^\infty})|)}{N}\indic_{\sf=0} + C_0\|\mu^t\|_{L^\infty}^{\frac{\sf}{\d}}N^{-\frac{\al_1}{1+\sf}}
%\\
%\le e^{C\int_0^{\log(t+1)} \|\bar{u}^{\tau'}\|_*d\tau'}E_{N}(f_N^0, \mu^0) + \frac{\log(t+1)}{N}\Big(\frac14- \frac{C_0\d}{2}\Big)\indic_{\substack{\sf =0 \\ \sf<\d-2}} + \\
%+ \frac{2\Cs(\sf+\indic_{\sf=0})(\d-\sf-2)[1-(t+1)^{-\frac{\sf}{2}}]}{\be\sf} \sup_{[0,\log(t+1)]}\|\bmu^{\tau'}\|_{L^\infty}^{\frac{\sf+2}{\d}}\Big(N^{\frac{\sf+2}{\d}-1}\indic_{\d-4\le \sf <\d-2} + N^{-\al_2}\indic_{\sf<\d-4}\Big) \\
%\frac{\Cs[1-(t+1)^{-\frac{\sf}{2}}]}{2}e^{C\int_0^{\log(t+1)} \|\bar{u}^{\tau'}\|_*d\tau'}\sup_{[0,\log(t+1)]}\|\bmu^{\tau'}\|_{L^\infty}^{\frac{\sf}{\d}}N^{-\frac{2(\d-\sf)}{2(\d-\sf)+\sf(\d+2)}},
%\end{multline}
%with the convention that $\frac{1-(t+1)^{-\frac{\sf}{2}}}{\sf} \coloneqq \frac12\log(t+1)$ if $\sf=0$. This yields \eqref{eq:MFEgron11'} after relabeling of the constants and noting that the $\log(t+1)$ term is nonpositive if $C_0$ is sufficiently large.

\medskip
Next, we consider the case $\M=-\I$. The analysis is similar to before; our starting point is now, thanks to \eqref{eq:MFEdiss2} from \cref{lem:MFEdiss} and \cref{rem:mon},
\begin{multline}\label{eq:MFEIpre}
\frac{d}{d\tau}\bEc^\tau(\bar{f}_N^\tau , \bmu^\tau)  \le -\frac{1}{\be^2}I_N(\bar{f}_N^\tau\vert {\Q}_{N,\be}^\tau(\bmu^\tau)) -\frac{\sf}{2}\E_{\bar{f}_N^\tau}[\bar{F}_N^\tau(\Xi_N,\bmu^\tau)] +\frac{1}{4N}\indic_{\substack{\sf = 0 \\ }}\\
-\frac12\E_{\bar{f}_N^\tau}\Bigg[\int_{(\R^\d)^2\setminus\triangle}\Big(\bar{u}^\tau(\xi)-\bar{u}^\tau(\eta)\Big)\cdot \nabla\g(\xi-\eta)d\Big(\frac1N\sum_{i=1}^N \delta_{\xi_i}-\bmu^\tau\Big)^{\otimes 2}(\xi,\eta)\Bigg],
\end{multline}
where $\bar{u}^\tau = \frac1\be\nabla\log\frac{\bmu^\tau}{\bmu_\be^\tau} + \nabla\g\ast(\bmu^\tau-\bmu_\be^\tau)$. We again discard the relative Fisher information term. We then use estimates \eqref{eq:MFEMEterm} and \eqref{eq:MFECommterm} to obtain
\begin{align}
\frac{d}{d\tau}\bEc^\tau(\bar{f}_N^\tau , \bmu^\tau)  \le  C\|\bar{u}^\tau\|_{*}\bar{\Ec}_N^\tau(\bar{f}_N^\tau,\bmu^\tau) +\frac{1}{4N}\indic_{\substack{\sf=0 \\ }} +\frac{ \Cs\|\bmu^\tau\|_{L^\infty}^{\frac{\sf}{\d}}\sf e^{-\sf\tau/2}}{4}\begin{cases}  N^{\frac{\sf}{\d}-1}, & {\sf\geq\d-2} \\    N^{-\al_1}, & {\sf<\d-2}.\end{cases} 
\end{align}
Integrating this differential inequality and performing the same majorizations and simplifications as above, we obtain \eqref{eq:MFEgron12}. Reverting to $(t,x)$ coordinates, using \eqref{eq:bEcNEcN}, yields \eqref{eq:MFEgron12'}. This completes the proof of the proposition.

\end{proof}

In closing the Gr\"onwall relation for $\bar{\Ec}_N^\tau(\bar{f}_N^\tau,\bmu^\tau)$, we have discarded the relative Fisher information term for both cases of $\M$. However, as commented in \cref{ssec:introMR}, this term, through an LSI, is the key to obtaining an estimate where the dependence on the initial relative entropy or modulated free energy, which becomes small only if $f_N^0$ is $\mu^0$-chaotic, vanishes as $\tau\rightarrow\infty$. %If the modulated Gibbs measure $\Q_{N,\be}^\tau(\bmu^\tau)$ obeys an LSI with constant $C_{LS}$ uniform in both $N$ and $\tau$ (i.e. a $\bmu^\tau$-modulated LSI holds uniformly in $\tau$, in the language of \cite{?}), then we may exploit the relative Fisher information to obtain .
The next proposition establishes estimates \eqref{eq:mainMFE1}, \eqref{eq:mainMFE2} of \cref{thm:main}. With them, the proof of \cref{thm:main} is then complete.

\begin{prop}\label{prop:MFEgron2}
Suppose that $\M$ is antisymmetric and $\sf \in (0,\d-2)$. Define
\begin{align}\label{eq:kadef1}
\ka\coloneqq \inf_{\tau\ge 0} \min\Big(e^{-2\|\log\frac{\bmu^\tau}{\bmu_\be^\infty}\|_{L^\infty}}, \frac{\sf}{4}\Big).
\end{align}
Then for $\tau\ge 0$,
\begin{multline}\label{eq:MFEgron21}
\bar{\Ec}_N^\tau(\bar{f}_N^\tau, \bmu^\tau)  \le e^{-\ka\tau + C\int_0^\tau \|\bar{u}^{\tau'}\|_{*}d\tau'}\Bigg[\bar{\Ec}_N^0(\bar{f}_N^0, \bmu^0) +{C_0[1-e^{-\frac{\sf\tau}{4}}]}\sup_{[0,\tau]}\|\bmu^{\tau'}\|_{L^\infty}^{\frac{\sf}{\d}} N^{-\frac{\al_1}{1+\sf}} \\
+ \frac{{C}[1-e^{-\frac{\sf\tau}{4}}]}{\be\sf}\sup_{[0,\tau]}\|\bmu^{\tau'}\|_{L^\infty}^{\frac{\sf+2}{\d}}\Big(N^{\frac{\sf+2}{\d}-1}\indic_{\d-4\le \sf <\d-2} + N^{-\al_2}\indic_{\sf<\d-4}\Big) \Bigg]
\end{multline}
and for $t\ge 0$,
\begin{multline}\label{eq:MFEgron21'}
{\Ec}_N({f}_N^t, \mu^t)  \le  \frac{e^{C\int_0^{\log(t+1)}  \|\bar{u}^{\tau'}\|_{*}d\tau'}}{(t+1)^{\ka}}\Bigg[{\Ec}_N({f}_N^0, \mu^0) + C_0[1-(t+1)^{-\frac{\sf}{4}}]\sup_{[0,\log(t+1)]} \|\bmu^{\tau'}\|_{L^\infty}^{\frac{\sf}{\d}}N^{-\frac{\al_1}{1+\sf}}\\
+\frac{{C}[1-(t+1)^{-\frac{\sf}{4}}]}{\be\sf}\sup_{[0,\log(t+1)]} \|\bmu^{\tau'}\|_{L^\infty}^{\frac{\sf+2}{\d}}\Big(N^{\frac{\sf+2}{\d}-1}\indic_{\d-4\le \sf} + N^{-\al_2}\indic_{\sf<\d-4}\Big) \Bigg],
\end{multline}
where $\al_1,\al_2$ are as in \eqref{eq:al12def} and $C>0$ depends only on $\d,\sf$.

Suppose that $\M=-\I$ and $\sf \in [0,\d)$. Let $C_{LS,N}^\tau \in [0,\infty]$ be the LSI constant for ${\Q}_{N,\be}^\tau(\bmu^\tau)$, and define
\begin{align}
\ka \coloneqq \inf_{\tau\ge 0} \frac{1}{C_{LS,N}^\tau \be}.
\end{align}
Then
\begin{multline}\label{eq:MFEgron22}
\bar{\Ec}_N^\tau(\bar{f}_N^\tau, \bmu^\tau) \le e^{C\int_0^\tau \|\bar{u}^{\tau'}\|_{*}d\tau'}\Bigg[e^{-{\ka}\tau }\Ec_N^0(\bar{f}_N^0, \bmu^0) + \frac{C[1-e^{-\ka\tau}]}{N}\Big[1+\sup_{[0,\tau]}\log_-(N\|\bmu^{\tau'}\|_{L^\infty})\Big]\indic_{\sf=0}\\
+ \frac{C\ka\be e^{-\ka\tau}[e^{(\ka-\frac{\sf}{2})\tau}-1]}{(\ka-\frac{\sf}{2})} \sup_{[0,\tau]}\begin{cases} \frac{\log(N\|\bmu^{\tau'}\|_{L^\infty})}{2\d N}\indic_{\sf=0} + \Cs\|\bmu^{\tau'}\|_{L^\infty}^{\frac{\sf}{\d}}N^{\frac{\sf}{\d}-1}, & {\d-2\le \sf<\d} \\ \frac{\Cs(\log  N + |\log\|\bmu^{\tau'}\|_{L^\infty}|)}{N}\indic_{\sf=0} + \Cs\|\bmu^{\tau'}\|_{L^\infty}^{\frac{\sf}{\d}}N^{-\al_1}, & {\sf<\d-2} \end{cases}\\
+\frac{C\sf e^{-\ka\tau}[e^{(\ka-\frac{\sf}{2})\tau}-1]}{(\ka-\frac{\sf}{2})} \Big(N^{\frac{\sf}{\d}-1}\indic_{\sf\ge \d-2} + N^{-\al_1}\indic_{\sf<\d-2} \Big)\sup_{[0,\tau]}\|\bmu^{\tau'}\|_{L^\infty}^{\frac{\sf}{\d}}\Bigg]
\end{multline}
and
\begin{multline}\label{eq:MFEgron22'}
\Ec_N(f_N^t,\mu^t) \le {e^{C\int_0^{\log(t+1)} \|\bar{u}^{\tau'}\|_{*}d\tau'}}\Bigg[\frac{\Ec_N(f_N^0,\mu^0)}{(t+1)^{\ka}} + \frac{C[1-(t+1)^{-\ka}]}{N}\Big(1+\sup_{[0,\log(t+1)]} \log_{-}(N\|\bmu^{\tau'}\|_{L^\infty})\Big)\indic_{\sf=0} \\
+ \frac{C\ka\be[(t+1)^{\ka-\frac{\sf}{2}}-1]}{(\ka-\frac{\sf}{2})(t+1)^{{\ka}}}\sup_{[0,\log(t+1)]}\begin{cases} \frac{\log(N\|\bmu^{\tau'}\|_{L^\infty})}{2\d N}\indic_{\sf=0} + \Cs\|\bmu^{\tau'}\|_{L^\infty}^{\frac{\sf}{\d}}N^{\frac{\sf}{\d}-1}, & {\d-2\le \sf<\d} \\ \frac{\Cs(\log  N + |\log\|\bmu^{\tau'}\|_{L^\infty}|)}{N}\indic_{\sf=0} + \Cs\|\bmu^{\tau'}\|_{L^\infty}^{\frac{\sf}{\d}}N^{-\al_1}, & {\sf<\d-2} \end{cases} \\
+\frac{C\sf[(t+1)^{\ka-\frac{\sf}{2}}-1]}{(\ka-\frac{\sf}{2})(t+1)^{{\ka}}}\Big(N^{\frac{\sf}{\d}-1}\indic_{\sf\ge \d-2} + N^{-\al_1}\indic_{\sf<\d-2} \Big)\sup_{[0,\log(t+1)]}\|\bmu^{\tau'}\|_{L^\infty}^{\frac{\sf}{\d}}\Bigg],
\end{multline} 
where $C$ is as above.
\end{prop}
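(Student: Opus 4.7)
The plan is to repeat the Gr\"onwall argument of \cref{prop:MFEgron1}, but this time retain, rather than discard, the relative Fisher information terms in the dissipation inequalities \eqref{eq:MFEdiss1}, \eqref{eq:MFEdiss2}; via log-Sobolev inequalities (LSI), these are converted into decay terms for $\bar{E}_N^\tau$. The modulated energy contributions $-\frac{\sf}{4}\E_{\bar{f}_N^\tau}[\bar{F}_N^\tau]$ (antisymmetric) and $-\frac{\sf}{2}\E_{\bar{f}_N^\tau}[\bar{F}_N^\tau]$ (gradient) still supply decay of the modulated energy piece of $\bar{\Ec}_N^\tau$. The $\Delta\g_\tau$ and commutator terms, and the nonnegativity corrections $\bar{\os}_N^\tau(1)$, will be handled exactly as in \cref{prop:MFEgron1} via \cref{lem:MEpos,lem:MEcomm} and \cref{rem:mon}.

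For the antisymmetric case, I would apply Holley--Stroock perturbation of the Gaussian LSI (as in the proof of \cref{prop:REgron}) to conclude that $(\bmu^\tau)^{\otimes N}$ satisfies an LSI with constant $\frac{1}{\be}e^{2\|\log(\bmu^\tau/\bmu_\be^\infty)\|_{L^\infty}}$; this gives
\begin{align*}
-\frac{1}{\be^2}I_N(\bar{f}_N^\tau \vert (\bmu^\tau)^{\otimes N}) \le -\frac{e^{-2\|\log(\bmu^\tau/\bmu_\be^\infty)\|_{L^\infty}}}{\be}H_N(\bar{f}_N^\tau \vert (\bmu^\tau)^{\otimes N}).
\end{align*}
Taking the infimum of this rate against $\sf/4$ yields the overall decay rate $\ka$ from \eqref{eq:kadef1}: the sum $-\frac{1}{\be^2}I_N - \frac{\sf}{4}\E[\bar{F}_N^\tau]$ is bounded above by $-\ka\bar{\Ec}_N^\tau$ plus a lower-order correction of size $\ka\bar{\os}_N^\tau(1)$. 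The resulting differential inequality has the form $\frac{d}{d\tau}\bar{\Ec}_N^\tau \le (-\ka + C\|\bar{u}^\tau\|_{*})\bar{\Ec}_N^\tau + \mathrm{source}(\tau)$ with source of order $e^{-\sf\tau/2}$, which integrates against the factor $\exp(\ka\tau - C\int \|\bar{u}\|_{*})$. The claimed factor $1-e^{-\sf\tau/4}$ in the source comes from majorizing $\int_0^\tau e^{(\ka-\sf/2)\tau'} d\tau' \le \frac{1-e^{-(\sf/2-\ka)\tau}}{\sf/2-\ka}$ combined with $\sf/2 - \ka \ge \sf/4$, which is guaranteed by $\ka \le \sf/4$. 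Converting back to $(t,x)$ via \cref{lem:MFEss} then yields \eqref{eq:MFEgron21'}.

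For the gradient case $\M = -\I$, the assumed LSI for $\Q_{N,\be}^\tau(\bmu^\tau)$ gives
\begin{align*}
-\frac{1}{\be^2}I_N(\bar{f}_N^\tau \vert \Q_{N,\be}^\tau(\bmu^\tau)) \le -\frac{1}{\be^2 C_{LS,N}^\tau} H_N(\bar{f}_N^\tau \vert \Q_{N,\be}^\tau(\bmu^\tau)).
\end{align*}
The identity $H_N(\bar{f}_N^\tau \vert \Q_{N,\be}^\tau(\bmu^\tau)) = \be\bar{E}_N^\tau + \frac{1}{N}\log\K_{N,\be}^\tau(\bmu^\tau)$, immediate from \eqref{eq:mGM}, converts the right-hand side into $-\ka\bar{E}_N^\tau - \frac{\ka}{\be N}\log\K_{N,\be}^\tau$. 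The partition function contribution is an $N^{-1}$-small error: an upper bound on $\log\K_{N,\be}^\tau/N$ follows from the almost-positivity of the modulated energy in \cref{lem:MEpos}, and a complementary lower bound follows from Jensen's inequality applied to the definition of $\K_{N,\be}^\tau$. Both scale as $\be\bar{\os}_N^\tau(1)$, producing the $\ka\be$ prefactor in the claimed source. Combining with $-\frac{\sf}{2}\E[\bar{F}_N^\tau]$ and the commutator bound, Gr\"onwall's lemma produces the factor $\frac{e^{-\ka\tau}[e^{(\ka-\sf/2)\tau}-1]}{\ka-\sf/2}$ upon integrating $e^{\ka\tau'}\cdot e^{-\sf\tau'/2}$; a final change of variables gives \eqref{eq:MFEgron22'}.

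The hardest piece will be the partition function bookkeeping in the gradient case: one must ensure that $|\log\K_{N,\be}^\tau|/N$ decays like $\be\,e^{-\sf\tau/2}$ and is $o_N(1)$ with an exponent in $N$ compatible with $\bar{\os}_N^\tau(1)$ across the sub-cases $\sf < \d-2$ and $\sf \ge \d-2$. A secondary delicacy is the limiting case $\ka = \sf/2$ in the gradient estimate, where the denominator $\ka-\sf/2$ vanishes; this must be treated as the conventional limit $\int_0^\tau e^{(\ka-\sf/2)\tau'}d\tau' = \tau$, analogous to the $\sf = 0$ convention already adopted in \cref{prop:MFEgron1}.
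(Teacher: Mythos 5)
Your proposal is correct and follows essentially the same route as the paper's proof: retain the relative Fisher information, convert it into decay via the Holley--Stroock LSI for $(\bmu^\tau)^{\otimes N}$ capped at rate $\frac{\sf}{4}$ in the antisymmetric case and via the assumed LSI for $\Q_{N,\be}^\tau(\bmu^\tau)$ together with the identity relating $H_N(\bar{f}_N^\tau\vert \Q_{N,\be}^\tau(\bmu^\tau))$ to $\bar{E}_N^\tau$ and $\frac1N\log\K_{N,\be}^\tau(\bmu^\tau)$ in the gradient case, then integrate the Gr\"onwall inequality as in \cref{prop:MFEgron1} and revert coordinates, your partition-function bookkeeping (almost positivity for one side, Jensen for the other) being exactly what the paper imports from \cite[Appendix A]{RS2023lsi}. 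Two harmless inaccuracies: in the antisymmetric case the correction must be taken of size $\frac{\sf}{4}\bar{\os}_N^\tau(1)$ rather than $\ka\bar{\os}_N^\tau(1)$, since $\E_{\bar{f}_N^\tau}[\bar{F}_N^\tau(\Xi_N,\bmu^\tau)]$ may be negative, and for $\sf=0$ the Jensen lower bound on $\log\K_{N,\be}^\tau(\bmu^\tau)$ involves a moment of $\bmu^\tau$ rather than only $\|\bmu^\tau\|_{L^\infty}$---neither changes the form of the estimates.
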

\begin{proof}
We start with the case $\M$ antisymmetric and $\sf\in(0,\d-2)$. Abbreviating $\bmu_\be^\infty=\bmu_\be$, we have, by \eqref{eq:FIbfNbmu}, that
\begin{align}
-\frac{1}{\be^2}I_N(\bar{f}_N^\tau\vert (\bmu^\tau)^{\otimes N}) &\le -\frac1\be e^{-2\|\log\frac{\bmu^\tau}{\bmu_\be}\|_{L^\infty}}H_N(\bar{f}_N^\tau \vert (\bmu^\tau)^{\otimes N}) \nn\\
&\le -\frac1\be\min\Big(e^{-2\|\log\frac{\bmu^\tau}{\bmu_\be}\|_{L^\infty}}, \frac{\sf}{4}\Big)H_N(\bar{f}_N^\tau \vert (\bmu^\tau)^{\otimes N}).
\end{align}
Let us abbreviate $\ka^\tau \coloneqq \min\Big(e^{-2\|\log\frac{\bmu^\tau}{\bmu_\be}\|_{L^\infty}}, \frac{\sf}{4}\Big)$. Combining the preceding inequality with the almost positivity bound \eqref{eq:MFEMEterm} and remembering the definition \eqref{eq:bosNdef} of the correction $\bar{\os}_N^\tau$, we find
\begin{align}
&-\frac{1}{\be^2}I_N(\bar{f}_N^\tau\vert (\bmu^\tau)^{\otimes N})  -\frac{\sf}{4}\E_{\bar{f}_N^\tau}[\bar{F}_N^\tau(\Xi_N,\bmu^\tau)] \nn\\
&\le -\frac{\ka^\tau}\be H_N(\bar{f}_N^\tau\vert (\bmu^\tau)^{\otimes N}) - \frac{\sf}{4}\Big(\E_{\bar{f}_N^\tau}[\bar{F}_N^\tau(\Xi_N,\bmu^\tau)] + \bar{\os}_N^\tau\Big) + \frac{\sf}{4}\bar{\os}_N^\tau \nn\\
&\le -\ka^\tau\Ec_N^\tau(\bar{f}_N^\tau,\bmu^\tau) +  \frac{\sf}{4}\bar{\os}_N^\tau,
\end{align}
where we have implicitly used that $\E_{\bar{f}_N^\tau}[\bar{F}_N^\tau(\Xi_N,\bmu^\tau)] + \bar{\os}_N^\tau\ge 0$ by choice of $\bar{\os}_N^\tau$. Applying the preceding bound to the right-hand side of \eqref{eq:MFEantispre} and then proceeding as in the proof of \cref{prop:MFEgron1}, we obtain the differential inequality
\begin{multline}
\frac{d}{d\tau}\Ec_N^\tau(\bar{f}_N^\tau, \bmu^\tau) \le -\ka^\tau\Ec_N^\tau(\bar{f}_N^\tau, \bmu^\tau)+  \frac{\sf}{4}\bar{\os}_N^\tau  +C\|\bar{u}^\tau\|_{*}\bEc^\tau(\bar{f}_N^\tau, \bmu^\tau) \\
+\frac{\Cs\sf e^{-\sf\tau/2}}{4} \|\bmu^\tau\|_{L^\infty}^{\frac{\sf}{\d}}  N^{-\al_1} + \mathsf{C}{(\sf+2)}(\d-\sf-2)\frac{e^{-\sf\tau/2}}{\be}\|\bmu^\tau\|_{L^\infty}^{\frac{\sf+2}{\d}}\begin{cases}   N^{\frac{\sf+2}{\d}-1}, & {\d-2>\sf\geq\d-4} \\    N^{-\al_2}, &{\sf<\d-4},\end{cases}
\end{multline}
which can be integrated, then simplified, to obtain the closed estimate
\begin{multline}\label{eq:bEcIpostGrongen}
\bar{\Ec}_N^\tau(\bar{f}_N^\tau, \bmu^\tau)  \le e^{\int_0^\tau (-\ka^{\tau'}+  C\|\bar{u}^{\tau'}\|_{*})d\tau'}\bar{\Ec}_N^0(\bar{f}_N^0, \bmu^0) 
+   \frac{\sf}{4}\int_0^\tau e^{\int_{\tau'}^\tau (-\ka^{\tau''}+  C\|\bar{u}^{\tau''}\|_{*})d\tau''}\bar{\os}_N^{\tau'}d\tau' \\
+ \frac{\Cs{(\sf+2)}(\d-\sf-2)}{\be}\sup_{[0,\tau]}\|\bmu^{\tau'}\|_{L^\infty}^{\frac{\sf+2}{\d}}\int_0^\tau \exp\Big({\int_{\tau'}^\tau (-\ka^{\tau''}+ C\|\bar{u}^{\tau''}\|_*)d\tau'' -\frac{\sf\tau'}{2}}\Big)\\
 \Big(N^{\frac{\sf+2}{\d}-1}\indic_{\d-4\le \sf <\d-2} + N^{-\al_2}\indic_{\sf<\d-4}\Big)d\tau' \\
 +\frac{\sf\Cs}{4}N^{-\al_1}\sup_{[0,\tau]}\|\bmu^\tau\|_{L^\infty}^{\frac{\sf}{\d}} \int_0^\tau  \exp\Big({\int_{\tau'}^\tau (-\ka^{\tau'} +C\|\bar{u}^{\tau''}\|_*)d\tau'' -\frac{\sf\tau'}{2}}\Big)   d\tau'.
\end{multline}
Set ${\ka}\coloneqq \inf_{\tau\ge 0} \ka^{\tau}$. Since tautologically $\ka \le \frac{\sf}{4}$, it follows that
\begin{align}
e^{-\int_{\tau'}^{\tau}\ka^{\tau''}d\tau''} e^{-\frac{\sf\tau'}{2}} \le e^{-\ka(\tau-\tau') - \frac{\sf\tau'}{2}} \le e^{-\ka\tau} e^{-\frac{\sf\tau'}{4}}.\label{eq:bEcIpostGrongen2}
\end{align}
Applying this bound to the right-hand side of \eqref{eq:bEcIpostGrongen}, then simplifying, we obtain
\begin{multline}
\bar{\Ec}_N^\tau(\bar{f}_N^\tau, \bmu^\tau)  \le e^{-\ka\tau + C\int_0^\tau \|\bar{u}^{\tau'}\|_{*}d\tau'}\bar{\Ec}_N^0(\bar{f}_N^0, \bmu^0)  +{C_0[1-e^{-\frac{\sf\tau}{4}}]}\sup_{[0,\tau]}\|\bmu^{\tau'}\|_{L^\infty}^{\frac{\sf}{\d}} e^{-\ka\tau + C\int_0^\tau \|\bar{u}^{\tau'}\|_{*}d\tau'} N^{-\frac{\al_1}{1+s}} \\
+ \frac{4\Cs{(\sf+2)}(\d-\sf-2)[1-e^{-\frac{\sf\tau}{4}}]}{\be\sf}\sup_{[0,\tau]}\|\bmu^{\tau'}\|_{L^\infty}^{\frac{\sf+2}{\d}}e^{-\ka\tau + C\int_0^\tau \|\bar{u}^{\tau'}\|_{*}d\tau'}\Big(N^{\frac{\sf+2}{\d}-1}\indic_{\d-4\le \sf <\d-2} + N^{-\al_2}\indic_{\sf<\d-4}\Big) \\
+\Cs{[1-e^{-\frac{\sf\tau}{4}}]}N^{-\al_1}\sup_{[0,\tau]}\|\bmu^\tau\|_{L^\infty}^{\frac{\sf}{\d}}e^{-\ka\tau + C\int_0^\tau \|\bar{u}^{\tau'}\|_{*}d\tau'} .
\end{multline}
After relabeling of constants, this yields \eqref{eq:MFEgron21}. Reverting to the original $(t,x)$ coordinates, we arrive at \eqref{eq:MFEgron21'}.

%Note that for fixed $N$, all the terms on the right-hand side vanish as $t\rightarrow\infty$.

We now consider the case $\M=-\I$ and $\sf\in [0,\d)$. Let $C_{LS,N}^\tau$ denote the LSI constant of ${\Q}_{N,\be}^{\tau}(\bmu^\tau)$, which we allow to be infinite, and, recycling notation, set $\ka \coloneqq \inf_{\tau\ge 0}\frac{1}{C_{LS,N}^\tau\be}$. Following the calculations of \cite{RS2023lsi},
\begin{align}
-\frac{1}{\be^2}I_N(\bar{f}_N^\tau \vert {\Q}_{N,\be}^{\tau}(\bmu^\tau)) &\le -{\frac{\ka}{\be}}H_N(\bar{f}_N^\tau \vert {\Q}_{N,\be}^{\tau}(\bmu^\tau)) \nn\\
 &= -\ka\Big(\bar{E}_N^\tau(\bar{f}_N^\tau, \bmu^\tau)- \frac{\log \K_{N,\be}^\tau(\bmu^\tau)}{N}\Big) \nn\\
 &=-\ka\bar{\Ec}_N^\tau(\bar{f}_N^\tau,\bmu^\tau) + \ka\Big(\frac{\log \K_{N,\be}^\tau(\bmu^\tau)}{N}  - \bar{\os}_N^\tau\Big).
\end{align}
 Applying the preceding estimate to the right-hand side of \eqref{eq:MFEIpre}, then arguing as in the proof of \cref{prop:MFEgron1},
\begin{multline}
\frac{d}{d\tau}\bEc^\tau(\bar{f}_N^\tau, \bmu^\tau) \le \Big(-\ka + C\|\bar{u}^\tau\|_{*}\Big)\Ec_N^\tau(\bar{f}_N^\tau, \bmu^\tau)  + \ka\Big(\frac{\log \K_{N,\be}^\tau(\bmu^\tau)}{N}  - \bar{\os}_N^\tau\Big)\\
+\frac{1}{4N}\indic_{\substack{\sf=0 \\ }} +\frac{\sf e^{-\sf\tau/2}  \mathsf{C}\|\bmu^\tau\|_{L^\infty}^{\frac{\sf}{\d}}}{4}\begin{cases} N^{\frac{\sf}{\d}-1}, & {\sf\geq\d-2} \\ N^{-\al_1}, & {\sf<\d-2}.\end{cases}
\end{multline}
Recalling the definition \eqref{eq:bosNdef} of $\bar{\os}_N^\tau$, we see that
\begin{align}
-\bar{\os}_N^\tau \le \frac{\log_-(N\|\bmu^\tau\|_{L^\infty})}{2\d N}\indic_{\sf=0},
\end{align}
where $\log_{-}(\cdot) = -\min(\log(\cdot),0)$. As shown in \cite[Appendix A]{RS2023lsi},
\begin{align}
\frac{{\log \K_{N,\be}^\tau(\bmu^\tau)}}{N} \le \be e^{-\frac{\sf\tau}{2}}\begin{cases} \frac{\log(N\|\bmu^\tau\|_{L^\infty})}{2\d N}\indic_{\sf=0} + \Cs\|\bmu^\tau\|_{L^\infty}^{\frac{\sf}{\d}}N^{\frac{\sf}{\d}-1}, & {\d-2\le \sf<\d} \\ \frac{\Cs(\log  N + |\log\|\bmu^\tau\|_{L^\infty}|)}{N}\indic_{\sf=0} + \Cs\|\bmu^\tau\|_{L^\infty}^{\frac{\sf}{\d}}N^{-\al_1}, & {\sf<\d-2}. \end{cases}
\end{align}
Simplifying, we arrive at
\begin{multline}
\bar{\Ec}_N^\tau(\bar{f}_N^\tau, \bmu^\tau) \le e^{C\int_0^\tau \|\bar{u}^{\tau'}\|_{*}d\tau'}\Bigg[e^{-{\ka}\tau }\Ec_N^0(\bar{f}_N^0, \bmu^0) + \frac{[1-e^{-\ka\tau}]}{4N}\Big[1+\frac{2}{\d}\sup_{[0,\tau]}\log_-(N\|\bmu^{\tau'}\|_{L^\infty})\Big]\indic_{\sf=0}\\
+ \frac{\ka\be e^{-\ka\tau}[e^{(\ka-\frac{\sf}{2})\tau}-1]}{\ka-\frac{\sf}{2}}  \sup_{[0,\tau]}\begin{cases} \frac{\log(N\|\bmu^{\tau'}\|_{L^\infty})}{2\d N}\indic_{\sf=0} + \Cs\|\bmu^{\tau'}\|_{L^\infty}^{\frac{\sf}{\d}}N^{\frac{\sf}{\d}-1}, & {\d-2\le \sf<\d} \\ \frac{\Cs(\log  N + |\log\|\bmu^{\tau'}\|_{L^\infty}|)}{N}\indic_{\sf=0} + \Cs\|\bmu^{\tau'}\|_{L^\infty}^{\frac{\sf}{\d}}N^{-\al_1}, & {\sf<\d-2} \end{cases}\\
+\frac{\Cs\sf e^{-\ka\tau}[e^{(\ka-\frac{\sf}{2})\tau}-1]}{4(\ka-\frac{\sf}{2})}\Big(N^{\frac{\sf}{\d}-1}\indic_{\sf\ge \d-2} + N^{-\al_1}\indic_{\sf<\d-2} \Big)\sup_{[0,\tau]}\|\bmu^{\tau'}\|_{L^\infty}^{\frac{\sf}{\d}}\Bigg].
\end{multline}
After relabeling constants, this gives \eqref{eq:MFEgron21'}. Reverting to $(t,x)$ coordinates and relabeling the constants, we obtain \eqref{eq:MFEgron22'}.
\end{proof}

\begin{remark}\label{rem:MFEhamLSItorus}
We elaborate on using the modulated free energy to obtain generation of chaos on the torus for the Hamiltonian case $\sf \in (0,\d-2)$ mentioned in \cref{rem:MFEham}. Since the self-similar transformation is not relevant for the torus, there is no $-\frac{\sf}{4}\E_{\bar{f}_N^\tau}[\bar{\Fr}_N(\Xi_N,\bmu^\tau)]$ term in the periodic case. However, as shown in \cite{RS2021},\footnote{Strictly speaking, this work only considers the whole space; but the argument is adaptable to the torus using the decay estimates of \cite{CdCRS2023}.} one has a uniform-in-time bound for the expected modulated energy solely in terms of the initial modulated energy (i.e., there is no need for the relative entropy). Hence, a caricatured differential inequality
\begin{align}
\frac{d}{d\tau}\Ec_N^\tau(\bar{f}_N^\tau,\bmu^\tau) &\le -\frac1\be e^{-\|\log\frac{\bmu^\tau}{\bmu_\be}\|_{L^\infty}}H_N(\bar{f}_N^\tau \vert (\bmu^\tau)^{\otimes N}) + \bar{\os}_N^\tau(1)  \nn\\
&\le  -e^{-\|\log\frac{\bmu^\tau}{\bmu_\be}\|_{L^\infty}}\Ec_N^\tau(\bar{f}_N^\tau,\bmu^\tau) + \frac{1}{\be}e^{-\|\log\frac{\bmu^\tau}{\bmu_\be}\|_{L^\infty}}\Big(\E_{\bar{f}_N^\tau}[\Fr_N^\tau(\Xi_N,\bmu^\tau)] +\bar{\os}_N^\tau(1)\Big)
\end{align}
may be integrated, setting $\ka^\tau \coloneqq e^{-\|\log\frac{\bmu^\tau}{\bmu_\be}\|_{L^\infty}}$, to obtain
\begin{align}
\Ec_N^\tau(\bar{f}_N^\tau,\bmu^\tau) &\le e^{-\int_0^\tau \ka^{\tau'}d\tau'}\Ec_N^0(\bar{f}_N^0,\bmu^0) + \frac1\be\int_0^\tau \ka^{\tau'} e^{-\int_{\tau'}^\tau \ka^{\tau''}d\tau''}\Big(\E_{\bar{f}_N^{\tau'}}[\Fr_N^{\tau'}(\Xi_N,\bmu^\tau)] +\bar{\os}_N^{\tau'}(1)\Big)d\tau'.
\end{align}
Noting that
\begin{multline}
\int_0^\tau \ka^{\tau'} e^{-\int_{\tau'}^{\tau}\ka^{\tau''}d\tau''}\Big(\E_{\bar{f}_N^{\tau'}}[\Fr_N^{\tau'}(\Xi_N,\bmu^\tau)] +\bar{\os}_N^{\tau'}(1)\Big) d\tau'\\
 \le \sup_{[0,\tau]}\Big(\E_{\bar{f}_N^{\tau'}}[\Fr_N^{\tau'}(\Xi_N,\bmu^\tau)] +\bar{\os}_N^{\tau'}(1)\Big) \Big[\ka^\tau - \ka^{0}e^{-\int_0^\tau \ka^{\tau'}d\tau'}\Big]
\end{multline}
completes the argument.
\end{remark}

%\begin{remark}
%In \cite{RS2023}, it was shown that...The same proof works with $\g$ replaced by $\g_\tau$. 
%\end{remark}

\bibliographystyle{alpha}
\bibliography{../MASTER}

\end{document}